\numberwithin{equation}{section}
\renewcommand{\eval}[2]{#1\big|_{#2}}
\begin{document}
\title{\vspace*{-2.8em}\huge Minimal geodesics on $\GLn$ for left-invariant, right-$\On$-invariant Riemannian metrics}
\author{%
Robert Martin\thanks{Corresponding author, Lehrstuhl f\"{u}r Nichtlineare Analysis und Modellierung, Fakult\"{a}t f\"{u}r Mathematik, Universit\"{a}t Duisburg-Essen,  Thea-Leymann Str.\ 9, 45127 Essen, Germany, email: robert.martin@uni-due.de}
\quad and \quad%
Patrizio Neff\thanks{Head of Lehrstuhl f\"{u}r Nichtlineare Analysis und Modellierung, Fakult\"{a}t f\"{u}r Mathematik, Universit\"{a}t Duisburg-Essen,  Thea-Leymann Str.\ 9, 45127 Essen, Germany, email: patrizio.neff@uni-due.de}
}
\date{\vspace*{-.21em}{\small\today}\vspace*{-1.54em}}
\maketitle
\begin{abstract}
We provide an easy approach to the geodesic distance on the general linear group $\GLn$ for left-invariant Riemannian metrics which are also right-$\On$-invariant. The parametrization of geodesic curves and the global existence of length minimizing geodesics are deduced using simple methods based on the calculus of variations and classical analysis only. The geodesic distance is discussed for some special cases and applications towards the theory of nonlinear elasticity are indicated.
\end{abstract}
{\small
\tableofcontents}

\section{Introduction and preliminaries}
The interpretation of the general linear group $\GLn$ as a Riemannian manifold instead of a simple subset of the linear matrix space $\Rnn$ has recently been motivated by results in the theory of nonlinear elasticity \cite{Neff_Eidel_Osterbrink_2013,neff2015henckymain}, showing a connection between the \emph{logarithmic strain tensor} $\log\sqrt{F^TF}$ of a deformation gradient $F\in\GLpn$ and the \emph{geodesic distance} of $F$ to the special orthogonal group $\SOn$. Since the requirements of objectivity and isotropy strongly suggest a distance measure on $\GLpn$ which is right-invariant under rotations and left-invariant with respect to action of $\GLpn$, we restrict our considerations to Riemannian metrics on $\GLn$ which are left-$\GLn$-invariant as well as right $\On$-invariant.

Although the theory of Lie groups is obviously applicable to $\GLn$ with such a metric, this general approach utilizes many intricate results from the abstract theory of differential geometry and is therefore not easily accessible to readers not sufficiently familiar with these subjects. Furthermore, while the explicit parametrization of geodesic curves has been given for the canonical left-invariant metric on $\GLpn$ \cite{Andruchow2011} as well as for left-invariant, right-$\SOn$-invariant %
metrics on $\SLn$ \cite{Mielke2002}, analogous results are not found in the literature for the more general case on $\GLn$.

The aim of this paper is therefore to provide a more accessible approach to this type of Riemannian metrics on $\GLn$ and the induced geodesic distances as well as to deduce the parametrization of geodesic curves using only basic methods from the calculus of variations and classical analysis. In order to keep this article as self-contained as possible, we will begin by stating (and proving) some very basic facts on Riemannian metrics for the special case of $\GLn$.

\subsection{Basic Definitions}
Let $\Rnn$ denote the set of all $n\times n$ real matrices and let $\id$ denote the identity matrix in $\Rnn$. We define the groups
\begin{alignat*}{2}
	\GLn &= \{X\in\Rnn \setvert \det(X)\neq0\} && \text{(\emph{general linear group})}\,,\\
	\GLpn &= \{X\in\Rnn \setvert \det(X)>0\}\,,\\
	\GLmn &= \{X\in\Rnn \setvert \det(X)<0\}\,,\\
	\SLn &= \{X\in\Rnn \setvert \det(X)=1\} && \text{(\emph{special linear group})}\,,\\
	\On &= \{X\in\Rnn \setvert X^TX=\id\} && \text{(\emph{orthogonal group})}\,,\\
	\SOn &= \{X\in\Rnn \setvert X^TX=\id \,\text{ and } \det(X)=1\} \qquad&& \text{(\emph{special orthogonal group})}\,.
\end{alignat*}
Furthermore, we define the set of \emph{symmetric matrices} $\Symn = \{X\in\Rnn \setvert X^T=X\}$, the set of \emph{positive definite symmetric matrices} $\PSymn = \{X\in\Rnn \setvert X^T=X \text{ and } v^T Xv>0\;\;\forall\,v\in\R^n\setminus\{0\}\}$ and the set of \emph{skew symmetric matrices} $\son = \{X\in\Rnn \setvert X^T=-X\}$.

\subsection{Distance functions on $\Rnn$}
A \emph{distance} on a set $M$ is a function $\dist\colon M \times M \to [0,\infty]$ with
\begin{align}
	&\dist(A,A) = 0\,, \qquad \dist(A,B) = \dist(B,A) \nnl
	\text{and} \quad &\dist(A,C) \leq \dist(A,B) + \dist(B,C) &&\text{(\emph{triangle inequality})}	\label{eq:distanceTriangleInequality}
\end{align}
for all $A,B,C \in M$. Note that $\dist$ is a \emph{metric} on $\Rnn$ if and only if, additionally,
\[
	0 < \dist(A,B) < \infty \quad \text{for all $A \neq B$.}
\]
A common distance on $\Rnn$ is the \emph{Euclidean distance}: The canonical inner product
\begin{align}
	\eucproduct{M,N} = \tr(M^T N) = \sum_{ij=1}^{n} M_{ij} N_{ij}\,, \quad M,N \in \Rnn\,,
\end{align}
where $\tr M = \sum_{i=1}^n M_{i,i}$ denotes the \emph{trace} of $M$, induces the Euclidean norm (or \emph{Frobenius matrix norm})
\begin{align}
	\norm{M} = \sqrt{\eucproduct{M,M}} = \sqrt{\smash{\sum_{i,j=1}^{n}}\vphantom{\sum^n} M_{ij}^2}\;,
\end{align}
and the Euclidean distance is the metric given by
\begin{align}
	\disteuc(A,B) = \norm{A-B}\,.
\end{align}

While $\disteuc$ induces a distance function on $\GLn$ as well, it does not appear as a \enquote{natural} inner property of the general linear group: since $\GLn$ is not a linear space, the term $A-B$ depends on the underlying algebraic structure of the vector space $\Rnn$. Furthermore, because $\GLn$ is not a closed subset of $\Rnn$, it is not complete with respect to the Euclidean distance.

A more proper distance measure should take into account the algebraic properties of $\GLn$ as a group. To find such a function we interpret $\GLn$ as a Riemannian manifold: as an open subset of $\Rnn$, the tangent space $T_A \GLn$ at an arbitrary point $A \in \GLn$ is given by\footnote{Note that in the theory of Lie Groups $\gln$ usually denotes (or, more precisely, is identified with) the tangent space $T_\id \GLn$ at the identity only. From the perspective of classical analysis, all tangent spaces $T_A\GLn$ can simply be identified with $\Rnn$, allowing us to employ a much simpler notion of \enquote{smoothness}.}
\begin{align}
	T_A \GLn = A\cdot T_\id \GLn \cong A\cdot \Rnn = \Rnn \equalscolon \gln\,.
\end{align}
To obtain a distance function respecting this structure on $\GLn$, we will consider a measurement along connecting curves.

\subsection{The geodesic distance}

A \emph{Riemannian metric} on $\GLn$ is a smooth (i.e.\ infinitely differentiable) function
\begin{align}
	g\colon \GLn \times \gln \times \gln \to \R, \; (A,M,N) \mapsto g_A(M,N)
\end{align}
such that for every fixed $A \in \GLn$ the function $g_A(\cdot,\cdot)\colon \gln \times \gln \to \R$ is a positive definite symmetric bilinear form, i.e.
\begin{align*}
	g_A(\lambda_1 M_1 + \lambda_2 M_2, N) &= \lambda_1 g_A(M_1, N) + \lambda_2 g_A(M_2, N)\,, \quad
	g_A(M,N) = g_A (N,M)\,, \quad
	g_A(T,T) > 0
\end{align*}
for all $M,N,M_1,M_2,T \in \gln$, $\lambda_1, \lambda_2 \in \R$, $T \neq 0$.
A Riemannian metric allows for a measurement of sufficiently smooth curves in $\GLn$: the \emph{length} $\len$ and \emph{energy} $\ener$ of a curve $X \in C^1([a,b]; \GLn$) are given by
\begin{equation}
	\len(X) \colonequals \int\nolimits_0^1 \sqrt{g_{X(t)}(\Xdot(t), \Xdot(t))} \:\dt \qquad\text{and}\qquad \ener(X) \colonequals \int\nolimits_0^1 g_{X(t)}(\Xdot(t), \Xdot(t)) \:\dt\,,
\end{equation}
where we employ the notation $\Xdot(t) = \ddt X(t)$.\\
Note that the length of $X$ is defined similarly to the length of curves in Euclidean spaces. Thus many well-known properties, like invariance under reparameterization, still hold in the Riemannian case. Some such properties will be discussed further in the following section.

The geodesic distance $\dg(A,B)$ between $A,B\in\GLn$ can now be defined as the infimum over the length of curves connecting $A$ and $B$. For this we need an exact definition of the admissible sets of curves.

\begin{definitions}
\label{def:admissibleSets}
	We denote by
	\begin{align}
		C^k_r([a,b];\GLn) \colonequals \{X \in C^k([a,b];\GLn) \setvert \Xdot(t) \neq 0 \;\; \forall\, t \in [a,b]\}
	\end{align}
	the set of \emph{regular $k$-times differentiable} curves in $\GLn$ over the interval $[a,b]$. Note that, by the usual definition of differentiability on closed intervals as the restriction of differentiable functions on $\R$, the $i$-th derivative $X^{(i)}(a)$ and $X^{(i)}(b)$ at the boundaries is well-defined via the one-sided limits $\underset{t\searrow a}{\lim}$ $X^{(i)}(t)$ and $\underset{t\nearrow b}{\lim}$ $X^{(i)}(t)$. We now define the set of \emph{piecewise $k$-times differentiable curves} in $\GLn$ over the interval $[a,b]$ by
	\begin{align}
		\adm^k([a,b]) \colonequals \big\{\;X \in C^0(&[a,b];\GL(n)) \setvert \exists\,a = a_0 < a_1 < \dots < a_{m+1} = b, \; \forall\,j = 0,\dotsc,m:\nonumber\\ &\eval{X}{[a_j,a_{j+1}]} \in C^k_r([a_j,a_{j+1}];\GLn) \;\big\}\,.
	\end{align}
	Note that, by this definition, partially differentiable curves are continuous everywhere.
	Finally, for $A,B \in \GLn$, the \emph{admissible set of curves connecting $A$ and $B$} is
	\begin{equation}
		\adsetAB \colonequals \{ X\in\adm^1([a,b]) \setvert a,b\in\R\,, \, X(a)=A\,, \, X(b)=B \}\,,
	\end{equation}
	the admissible set over the fixed interval $[a,b]$ is denoted by
	\begin{equation}
		\adsetAB([a,b]) \colonequals \{ X\colon [a,b] \to \GLn, X \in \adsetAB\}
	\end{equation}
	and the general \emph{admissible set of curves} is
	\begin{equation}
		\adm \colonequals \bigcup_{A,B\in\GLn} \adsetAB\,.
	\end{equation}
\end{definitions}

\begin{remark}
	While the notion of piecewise differentiability is often found in the literature, a specific definition is sometimes omitted. The definition used here guarantees the existence of one-sided limits $\underset{t\searrow t_0}{\lim} X^{(i)}(t)$, $\underset{t\nearrow t_0}{\lim} X^{(i)}(t)$ everywhere and thus, in particular, that the length $\len(X) < \infty$ is well-defined.
\end{remark}
We can now properly define the geodesic distance function:
\begin{definition}
	Let $A,B \in \GLn$. Then
	\begin{align}
		\dg(A,B) \colonequals \underset{X\in\adsetAB}{\inf} \len(X)
	\end{align}
	is called the \emph{geodesic distance} between two matrices $A$ and $B$.
\end{definition}

\begin{remark}
	It is easy to verify that $\dg$ is indeed a distance function; in order to see that it satisfies the triangle inequality \eqref{eq:distanceTriangleInequality}, choose a curve $X \in \adsetAB$ connecting $A$ and $B$ with $\len(X) \leq \dg(A,B) + \eps$ as well as a curve $Y \in \adset{B}{C}$ connecting $B$ and $C$ with $\len(Y) \leq \dg(B,C) + \eps$. We assume (without loss of generality, as we will see in Lemma \ref{lemma:constantSpeed}) that both $X$ and $Y$ are defined on the interval $[0,1]$. Let $Z$ denote the curve obtained by \enquote{attaching} $Y$ to $X$, i.e.
	\begin{equation}
		Z\colon [0,2] \to \GLn, \quad Z(t) = \begin{cases}X(t)&: t\in[0,1]\\ Y(t-1)&: t\in[1,2]\end{cases}\,.
	\end{equation}
	Then $Z$ is piecewise differentiable, $Z(0)=X(0)=A$ and $Z(2)=Y(1)=C$, and thus $Z\in\adset{A}{C}$. We find\footnote{Here and throughout we will often omit the integration variable and write e.g.\ $\Zdot$ instead of $\Zdot(t)$.}
	\begin{align}
		\len(Z) = \int\nolimits_0^2 \sqrt{\fullg{Z}(\Zdot,\Zdot)} \:\dt &= \int\nolimits_0^1 \sqrt{\fullg{Z}(\Zdot,\Zdot)} \:\dt + \int\nolimits_1^2 \sqrt{\fullg{Z}(\Zdot,\Zdot)} \:\dt \\
		&= \int\nolimits_0^1 \sqrt{\fullg{X}(\Xdot,\Xdot)} \:\dt + \int\nolimits_0^1 \sqrt{\fullg{Y}(\Ydot,\Ydot)} \:\dt \nonumber \\
		&= \len(X) + \len(Y) \leq \dg(A,B) + \dg(B,C) + 2\eps \nonumber
	\end{align}
	and thus $\dg(A,C) \leq \len(Z) \leq \dg(A,B) + \dg(B,C) + 2\eps$ for all $\eps > 0$, which shows the triangle inequality.
\end{remark}

The geodesic distance can be considered a generalization of the Euclidean distance: if we measure the length of a curve $\gamma\colon [a,b] \to \Rnn$ by $\len(\gamma) = \int\nolimits_a^b \sqrt{\eucproduct{\gammadot,\gammadot}} \,\dt$, then the shortest curve connecting $A,B \in \Rnn$ is a straight line with length $\norm{A-B}$. Thus the Euclidean distance can be interpreted as the infimum over the length of connecting curves as well.
	
Furthermore, $\GLn$ is not connected, but can be decomposed into two connected components $\GLpn = \{A\in\GLn \setvert \det A > 0\}$ and $\GL^-(n) = \{A\in\GLn \setvert \det A < 0\}$. Thus $\dg(A,B) < \infty$ if and only if $A$ and $B$ are in the same connected component, i.e.\ iff $\det (AB) > 0$. As we will see later on, for left-invariant Riemannian metrics we can focus on the case $A,B \in \GLpn$ without loss of generality.

\subsection{Length and energy of curves}
In order to further investigate the geodesic distance, some basic properties of curves in $\GLn$, the length and the energy functional are required. Some of these properties can be found in any textbook on differential geometry; however, in order to keep this article self contained and accessible to readers unfamiliar with the methods of general differential geometry, we explicitly state and prove them here. Many properties of curves in $\GLn$ also correspond directly to the case of curves in the Euclidean space (see e.g.\ \cite[Chapter 12]{Koenigsberger2004}).

Consider the integrand $\sqrt{\fullg{X(t)}(\Xdot(t),\Xdot(t))}$ in the definition of the length of a curve $X$. In analogy to the Euclidean space, we call this term the \emph{speed} of $X$. Since, by definition of a Riemannian metric, $\fullg{A}(\cdot,\cdot)$ defines an inner product on the tangent space $T_{A}\GLn = \Rnn = \gln$, it induces a norm on $\gln$. We will therefore simplify notation by writing $\norm{M}_A = \sqrt{\fullg{A}(M,M)}$ for $A\in\GLn$, $M\in\gln$.
Then a differentiable curve $X \in \adm([a,b])$ has \emph{constant speed} with regard to the Riemannian metric $g$ if the mapping $t\mapsto\norm{\Xdot(t)}_{X(t)} = \sqrt{\fullg{X(t)}(\Xdot(t),\Xdot(t))}$ is constant on $[a,b]$. Furthermore, if $X$ is only piecewise differentiable, we say that $X$ has constant speed if there exists $c>0$ such that $\norm{\Xdot(t)}_{X(t)}=c$ for all $t\in(a,b)$ at which $X$ is differentiable.

\begin{lemma}
\label{lemma:constantSpeed}
	Let $X \in \adm^1([a,b])$. Then there exists a unique piecewise differentiable $\varphi\in C^0([a,b];[a,b])$, $\varphi(a)=a$, $\varphi(b)=b$, $\varphi'(t)>0$ such that $X \circ \varphi$ has constant speed.
\end{lemma}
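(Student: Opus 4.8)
The plan is to carry out the classical arc-length reparametrization, taking care to remain within the class of piecewise differentiable maps. First I would introduce the arc-length function
\[
	s\colon [a,b]\to[0,\len(X)],\qquad s(t)\colonequals\int_a^t \norm{\Xdot(\tau)}_{X(\tau)}\, d\tau\,.
\]
Fixing a partition $a=a_0<\dots<a_{m+1}=b$ witnessing $X\in\adm^1([a,b])$, the integrand $\tau\mapsto\norm{\Xdot(\tau)}_{X(\tau)}$ is continuous and strictly positive on each $[a_j,a_{j+1}]$ (by positive definiteness of $g$ together with $\Xdot\neq0$ there). Hence $s$ is continuous on $[a,b]$, of class $C^1$ on each $[a_j,a_{j+1}]$ with $s'(\tau)=\norm{\Xdot(\tau)}_{X(\tau)}>0$, and strictly increasing on all of $[a,b]$; in particular $L\colonequals s(b)=\len(X)>0$ and $s$ is a homeomorphism of $[a,b]$ onto $[0,L]$.

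Next I would rescale and invert: set $\psi(t)\colonequals a+\frac{b-a}{L}\,s(t)$ and $\varphi\colonequals\psi^{-1}$. Then $\psi$ is a strictly increasing homeomorphism of $[a,b]$ onto itself with $\psi(a)=a$, $\psi(b)=b$, of class $C^1$ on each $[a_j,a_{j+1}]$ with $\psi'(t)=\frac{b-a}{L}\norm{\Xdot(t)}_{X(t)}>0$. Consequently $\varphi$ inherits the corresponding structure: it is a strictly increasing homeomorphism of $[a,b]$ onto itself fixing the endpoints, of class $C^1$ on each interval $[\psi(a_j),\psi(a_{j+1})]$ with $\varphi'(u)=1/\psi'(\varphi(u))>0$ by the inverse function theorem. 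This already produces a piecewise differentiable $\varphi\in C^0([a,b];[a,b])$ with $\varphi(a)=a$, $\varphi(b)=b$ and $\varphi'>0$ (two-sided at smooth points, one-sided at the finitely many junction points). Applying the chain rule piecewise, $\frac{d}{du}(X\circ\varphi)(u)=\Xdot(\varphi(u))\,\varphi'(u)$ wherever $X\circ\varphi$ is differentiable, so its speed equals $\varphi'(u)\,\norm{\Xdot(\varphi(u))}_{X(\varphi(u))}=\norm{\Xdot(\varphi(u))}_{X(\varphi(u))}/\psi'(\varphi(u))=L/(b-a)$, a constant; that is, $X\circ\varphi$ has constant speed in the sense defined above.

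For uniqueness, let $\varphi$ be any map with the asserted properties and let $c>0$ be the constant speed of $X\circ\varphi$. Splitting the integral at the junction points and using the change of variables $t=\varphi(v)$ on each smooth piece (legitimate because $\varphi$ is an increasing, piecewise-$C^1$ bijection), I obtain for all $u\in[a,b]$
\[
	c\,(u-a)=\int_a^u\norm{\Xdot(\varphi(v))}_{X(\varphi(v))}\,\varphi'(v)\, dv=\int_a^{\varphi(u)}\norm{\Xdot(t)}_{X(t)}\, dt=s(\varphi(u))\,.
\]
Taking $u=b$ forces $c=L/(b-a)$, whence $s(\varphi(u))=\frac{L}{b-a}(u-a)$ and therefore $\varphi(u)=s^{-1}\big(\frac{L}{b-a}(u-a)\big)=\psi^{-1}(u)$ is uniquely determined and coincides with the map constructed above.

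The argument is essentially routine; the only point requiring attention — rather than a genuine obstacle — is the bookkeeping at the finitely many non-smooth junction points: one must check that continuity and strict monotonicity of $s$ survive the gluing (so that $s$, $\psi$, $\varphi$ are genuine homeomorphisms of the relevant intervals), that the inverse function theorem may be applied separately on each smooth piece, and that the substitution rule remains valid for a merely piecewise-$C^1$ change of variables.
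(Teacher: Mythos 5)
Your proof is correct and follows essentially the same route as the paper's Lemma \ref{lemma:constantSpeedAppendix}: introduce the arc-length function, invert and rescale it to obtain $\varphi$, and verify constant speed and uniqueness. The only (cosmetic) difference is in the uniqueness step, where you derive $s(\varphi(u))=c(u-a)$ directly for an arbitrary admissible $\varphi$, whereas the paper first reduces to the case that $X$ itself has constant speed and then shows the reparametrization must be the identity.
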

\begin{proof}
	See Lemma \ref{lemma:constantSpeedAppendix} in the appendix.
\end{proof}

For $X \in \adm([a,b])$, we denote by
\[
	X_c \colonequals X \circ \varphi \: \in\adm([a,b])
\]
the (unique) reparametrization from Lemma \ref{lemma:constantSpeed}.

An important property of the length functional is its invariance under reparametrizations:
\begin{lemma}
	\label{lemma:lengthInvariance}
	Let $X \in \adm([a,b])$, and let $\varphi \in  C^0([c,d];[a,b])$ be a piecewise continuously differentiable function with $\varphi(c)=a$, $\varphi(d)=b$ and $\varphi'(t)>0$. Then
	\begin{equation}
		\len(X \circ \varphi) = \len(X)\,.
	\end{equation}
\end{lemma}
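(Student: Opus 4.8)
The plan is to reduce everything to the one-dimensional substitution rule. The guiding observation is that the integrand defining $\len$ transforms under composition exactly like an arc-length element, because for each fixed $A$ the map $\norm{\cdot}_A = \sqrt{g_A(\cdot,\cdot)}$ is a genuine norm and in particular positively homogeneous. First I would settle the case where $X \in C^1([a,b];\GLn)$ and $\varphi \in C^1([c,d];[a,b])$ are globally differentiable. Then $Y \colonequals X \circ \varphi$ lies in $C^1([c,d];\GLn)$ with $\Ydot(t) = \varphi'(t)\,\Xdot(\varphi(t))$ by the chain rule, and since $\varphi'(t) > 0$ the homogeneity of $\norm{\cdot}_{X(\varphi(t))}$ gives
\begin{equation*}
	\norm{\Ydot(t)}_{Y(t)} = \norm{\varphi'(t)\,\Xdot(\varphi(t))}_{X(\varphi(t))} = \varphi'(t)\,\norm{\Xdot(\varphi(t))}_{X(\varphi(t))}\,.
\end{equation*}
Since $s \mapsto \norm{\Xdot(s)}_{X(s)}$ is continuous on $[a,b]$ (the smooth $g$ composed with the continuous maps $s\mapsto X(s)$ and $s\mapsto\Xdot(s)$, followed by a square root), the substitution $s = \varphi(t)$, $\mathrm ds = \varphi'(t)\,\mathrm dt$, converts $\len(Y) = \int_c^d \norm{\Ydot(t)}_{Y(t)}\dt$ into $\int_a^b \norm{\Xdot(s)}_{X(s)}\,\mathrm ds = \len(X)$.

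For the general piecewise case I would align the partitions. Being continuous on $[c,d]$ and piecewise $C^1$ with strictly positive derivative, $\varphi$ is strictly increasing, hence a homeomorphism of $[c,d]$ onto $[a,b]$. Choosing a partition $a = a_0 < \dots < a_{k+1} = b$ on which $X$ is piecewise $C^1$ and regular, a partition of $[c,d]$ on which $\varphi$ is piecewise $C^1$, and then the common refinement that additionally contains all preimages $\varphi^{-1}(a_j)$, one obtains $c = \tau_0 < \dots < \tau_{N+1} = d$ such that on each $[\tau_p,\tau_{p+1}]$ the map $\varphi$ is $C^1$ and $\varphi([\tau_p,\tau_{p+1}])$ is contained in a single interval on which $X$ is $C^1$ and regular. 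On each such piece $Y = X\circ\varphi$ is $C^1$ with $\Ydot = \varphi'\cdot(\Xdot\circ\varphi)\neq 0$ (using $\varphi'>0$ and regularity of $X$), so $Y \in \adm^1([c,d])$ and $\len(Y)$ is well-defined; the smooth case applied on $[\tau_p,\tau_{p+1}]$ yields $\len(\eval{Y}{[\tau_p,\tau_{p+1}]}) = \len(\eval{X}{[\varphi(\tau_p),\varphi(\tau_{p+1})]})$. Summing over $p$ and using that the length is additive over a partition of the parameter interval (immediate from additivity of the integral) gives $\len(Y) = \len(X)$.

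The only delicate point is the partition bookkeeping in the second step: one must ensure that each subinterval of the refined partition of $[c,d]$ is mapped by $\varphi$ into a single smooth, regular piece of $X$, which is precisely why the preimages $\varphi^{-1}(a_j)$ are added to the refinement. Everything else — the chain rule, positive homogeneity of $\norm{\cdot}_A$, and the substitution rule — is elementary, and none of it uses that the curves take values in $\GLn$ rather than in $\Rnn$.
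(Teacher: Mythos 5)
Your proof is correct and follows essentially the same route as the paper's: both rest on the chain rule, the positive homogeneity of $\norm{\cdot}_A$, the sign assumption $\varphi' > 0$, and the one-dimensional substitution rule, applied over a partition chosen so that $\varphi$ and $X$ are simultaneously $C^1$ on corresponding pieces. The only difference is expository — you separate the globally smooth case from the piecewise one and spell out the common-refinement bookkeeping (adding the preimages $\varphi^{-1}(a_j)$), whereas the paper folds this into a single choice of partition; the mathematical content is identical.
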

\begin{proof}
	See Lemma \ref{lemma:lengthInvarianceAppendix} in the appendix.
\end{proof}
To explicitly compute the geodesic distance, we will primarily search for \emph{length minimizers}, i.e.\ curves $X \in \adsetAB$ which satisfy
\begin{align}
	 \len(X) = \underset{Y\in\adsetAB}{\inf} \len(Y) = \dg(A,B)\,.
\end{align}
Since every curve on $[a,b]$ can be reparametrized (by scaling and shifting) to a curve of the same length on an arbitrary interval $[c,d]$, a restriction of the admissible interval does not change the infimal length:
\begin{align}
	\underset{Y\in\adsetAB}{\inf} \len(Y) = \underset{Y\in\adsetAB([a,b])}{\inf} \len(Y)\,;
\end{align}
recall from Definition \ref{def:admissibleSets} that $\adsetAB([a,b])$ denotes the set of piecewise differentiable curves defined on $[a,b]$ connecting $A$ and $B$. Furthermore, if there exists a length minimizer in $\adsetAB$, then there exists one in $\adsetAB([a,b])$ as well. If we are interested only in the length of a curve $X$, we will therefore often assume without loss of generality that $X$ is defined on $[0,1]$ and that $X$ has constant speed.

However, the energy functional is \emph{not} invariant under reparameterization: For a given curve $X\colon[0,1] \to \GLn$ and $\lambda > 0$, the energy of the curve $Y\colon[0,\lambda] \to \GLn, Y(t) = X(\frac{t}{\lambda})$ is
\begin{align}
	\ener(Y) = \int\nolimits_0^\lambda \fullg{Y(t)}(\Ydot(t),\Ydot(t)) \:\dt &= \int\nolimits_0^\lambda \fullg{X(\frac{t}{\lambda})}(\tfrac{1}{\lambda}\Xdot(\tfrac{t}{\lambda}), \tfrac{1}{\lambda}\Xdot(\tfrac{t}{\lambda})) \:\dt \nonumber \\
	&= \frac{1}{\lambda}\int\nolimits_0^\lambda \frac{1}{\lambda} \fullg{X(\frac{t}{\lambda})}(\Xdot(\tfrac{t}{\lambda}), \Xdot(\tfrac{t}{\lambda})) \:\dt \nonumber \\
	&= \frac{1}{\lambda}\int\nolimits_0^1 \fullg{X(t)}(\Xdot(t), \Xdot(t)) \:\dt \;=\; \frac{1}{\lambda}\,\ener(X)\,.
\end{align}
For $\lambda \to \infty$, we see that by admitting arbitrary parametrizations, the infimal energy of curves connecting $A$, $B$ is zero whenever $A$ and $B$ can be connected:
\begin{align}
	\underset{Y\in\adsetAB}{\inf} \ener(Y) = 0 \qquad \forall\, A,B \in \GLn: \det(AB) > 0\,.
\end{align}
We will therefore call $X\colon[a,b]\to\GLn, X\in\adsetAB([a,b])$ an \emph{energy minimizer} if and only if it minimizes the energy over all curves \emph{over the same parameter interval} connecting $A$ and $B$:
\begin{align}
	X\in\adsetAB([a,b]) \quad\text{ and }\quad \ener(X) = \underset{Y\in\adsetAB([a,b])}{\inf} \ener(Y)\,.
\end{align}
The next two lemmas show important relations between length minimizers and energy minimizers.

\begin{lemma}
	\label{lemma:lengthEnergyInequality}
	Let $X \in \adset{}{}([a,b])$ be a piecewise differentiable curve. Then
	\begin{align}
		\len(X)\leq \sqrt{b-a}\sqrt{\ener(X)}\,, \label{eq:energyLengthEstimate}
	\end{align}
	and equality holds if and only if $X$ has constant speed.
\end{lemma}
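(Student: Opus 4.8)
The plan is to reduce the statement to an elementary inequality for a single scalar function on $[a,b]$. Write $s(t) \colonequals \norm{\Xdot(t)}_{X(t)} = \sqrt{\fullg{X(t)}(\Xdot(t),\Xdot(t))}$ for the speed of $X$. Since $X\in\adset{}{}([a,b])$ is piecewise continuously differentiable and $\fullg{\cdot}(\cdot,\cdot)$ is smooth, $s$ is piecewise continuous on $[a,b]$; in particular $s$ and $s^2$ are bounded and Riemann integrable, and by definition $\len(X) = \int_a^b s(t)\,\dt$ and $\ener(X) = \int_a^b s(t)^2\,\dt$. Inequality \eqref{eq:energyLengthEstimate} is then exactly the Cauchy--Schwarz inequality applied to $s$ and the constant function $1$ on $[a,b]$; to keep the argument self-contained I would simply reprove it in this special case.

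To that end, I would note that for every $\lambda\in\R$,
\[
	0 \leq \int\nolimits_a^b \bigl(s(t)-\lambda\bigr)^2 \, \dt = \ener(X) - 2\lambda\,\len(X) + \lambda^2\,(b-a)\,.
\]
The right-hand side is a quadratic polynomial in $\lambda$ with positive leading coefficient $b-a$ which is nonnegative for all $\lambda$; hence its discriminant $4\len(X)^2 - 4(b-a)\ener(X)$ is nonpositive, which is precisely \eqref{eq:energyLengthEstimate}.

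It remains to treat the equality case. If $X$ has constant speed, say $s\equiv c$, then $\len(X)^2 = c^2(b-a)^2 = (b-a)\,\ener(X)$, so equality holds. Conversely, if equality holds in \eqref{eq:energyLengthEstimate}, the quadratic above has vanishing discriminant and therefore a double real root $\lambda_0$, for which $\int_a^b (s(t)-\lambda_0)^2\,\dt = 0$. The integrand is nonnegative, and on each subinterval $[a_j,a_{j+1}]$ of a partition witnessing the piecewise differentiability of $X$ it is continuous; a nonnegative continuous function with vanishing integral is identically zero, so $s(t) = \lambda_0$ for every $t$ at which $X$ is differentiable. Since $X$ is regular, $\Xdot(t)\neq0$ on the interior of each piece, and positive definiteness of $\fullg{X(t)}$ gives $s(t)>0$ there, so $\lambda_0>0$ and $X$ has constant speed in the sense defined above. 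The only point in this argument requiring a little care is exactly this last step — upgrading ``$s=\lambda_0$ almost everywhere'' to ``$s=\lambda_0$ at every point of differentiability'' via piecewise continuity of $s$, and checking $\lambda_0>0$ so that the curve genuinely qualifies as constant-speed — but I expect no real obstacle here.
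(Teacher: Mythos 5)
Your proof is correct and follows essentially the same route as the paper: both arguments reduce to the Cauchy--Schwarz (H\"older) inequality applied to the speed $s(t)=\norm{\Xdot(t)}_{X(t)}$ and the constant function $1$ on $[a,b]$. You re-derive the inequality from scratch via the discriminant of the nonnegative quadratic $\lambda\mapsto\int_a^b(s(t)-\lambda)^2\,\dt$ and handle the equality case slightly more carefully than the paper does (upgrading ``$s=\lambda_0$ almost everywhere'' to ``$s=\lambda_0$ at every point of differentiability'' by piecewise continuity, and verifying $\lambda_0>0$ so the definition of constant speed is actually met), whereas the paper simply cites H\"older and the linear-dependence characterization of its equality case.
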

\begin{proof}
	\begin{align}
		[\len(X)]^2 = \left( \int\nolimits_a^b \norm{\Xdot(t)}_{X(t)} \,\dt\right)^2
		&= \left( \int\nolimits_a^b 1 \cdot \norm{\Xdot(t)}_{X(t)} \,\dt \right)^2\nonumber \\
		&\leq \int\nolimits_a^b 1 \:\dt \:\cdot\: \int\nolimits_a^b \norm{\Xdot(t)}_{X(t)}^2 \:\dt \;=\; (b-a)\ener({\gamma})\,. \label{eq:hoelder}
	\end{align}
	The inequality in ($\ref{eq:hoelder}$) is due to the Hölder inequality, and equality holds if and only if $\norm{\Xdot}_{X}$ and $1$ are linearly dependent in $L^2([a,b])$, meaning $\norm{\Xdot}_X \equiv \text{constant}$ on $[a,b]$.
\end{proof}

\begin{lemma}
\label{lemma:energyLengthMinimizerRelation}
	For $X \in \adsetAB([a,b])$, the following are equivalent:
	\begin{align*}
		\text{i) }\ener(X) &= \minYab \ener(Y)\,, \\
		\text{ii) }\len(X) &= \minYab \len(Y) = \minY \len(Y)\text{ and $X$ has constant speed.}
	\end{align*}
\end{lemma}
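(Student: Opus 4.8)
The plan is to reduce everything to the length–energy inequality of Lemma~\ref{lemma:lengthEnergyInequality}, the constant-speed reparametrization $X_c$ of Lemma~\ref{lemma:constantSpeed}, and the reparametrization invariance of length (Lemma~\ref{lemma:lengthInvariance}). The single computation on which both implications rest is the equality case of \eqref{eq:energyLengthEstimate}: a curve $Y \in \adsetAB([a,b])$ has constant speed \emph{iff} $\ener(Y) = \frac{1}{b-a}[\len(Y)]^2$, while in general $\ener(Y_c) = \frac{1}{b-a}[\len(Y)]^2$ because $\len(Y_c) = \len(Y)$ by Lemma~\ref{lemma:lengthInvariance} and $Y_c$ has constant speed. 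I would first record that $Y_c = Y\circ\varphi$ with $\varphi$ fixing the endpoints $a,b$ and $\varphi' > 0$, so that $Y_c \in \adsetAB([a,b])$ as well and is an admissible competitor.

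For the implication i) $\Rightarrow$ ii), I would first argue by contradiction that an energy minimizer $X$ must have constant speed: if it did not, then $[\len(X)]^2 < (b-a)\,\ener(X)$ by Lemma~\ref{lemma:lengthEnergyInequality}, and the competitor $X_c \in \adsetAB([a,b])$ would satisfy $\ener(X_c) = \frac{1}{b-a}[\len(X)]^2 < \ener(X)$, contradicting minimality. Hence $\ener(X) = \frac{1}{b-a}[\len(X)]^2$, and comparing with $\ener(Y_c)$ for an arbitrary competitor $Y \in \adsetAB([a,b])$ gives
\[
  \tfrac{1}{b-a}[\len(X)]^2 = \ener(X) \leq \ener(Y_c) = \tfrac{1}{b-a}[\len(Y)]^2\,,
\]
so $\len(X) \leq \len(Y)$ for all such $Y$. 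Since it was already established before the lemma that the infimal length over a fixed interval equals the infimal length over all of $\adsetAB$ (and both equal $\dg(A,B)$), this upgrades to $\len(X) = \minYab \len(Y) = \minY \len(Y)$, giving ii).

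For ii) $\Rightarrow$ i), I would use that constant speed yields $\ener(X) = \frac{1}{b-a}[\len(X)]^2$, and then for any $Y \in \adsetAB([a,b])$ chain the length–energy inequality with the length-minimality of $X$:
\[
  \ener(Y) \;\geq\; \tfrac{1}{b-a}[\len(Y)]^2 \;\geq\; \tfrac{1}{b-a}[\len(X)]^2 \;=\; \ener(X)\,,
\]
so $X$ minimizes energy over $\adsetAB([a,b])$. There is no substantial obstacle here; the only points requiring a little care are checking that the constant-speed reparametrization stays inside $\adsetAB([a,b])$ (the interval and endpoints are preserved and piecewise differentiability is retained) and correctly invoking the earlier identification of the fixed-interval infimum of length with the global one.
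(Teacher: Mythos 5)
Your proof is correct and follows essentially the same route as the paper: both reduce to the equality case of Lemma~\ref{lemma:lengthEnergyInequality}, use the constant-speed reparametrization $X_c$ (resp.\ $Y_c$) together with length invariance to show an energy minimizer must have constant speed, and then chain the energy–length estimate to transfer minimality between the two functionals. The only cosmetic differences are that you argue the constant-speed step by contradiction where the paper argues directly, and you explicitly verify $Y_c \in \adsetAB([a,b])$, which the paper leaves implicit.
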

\begin{proof}
	Recall that $X_c$ denotes the (unique) parameterization of $X$ with constant speed on $[a,b]$. The invariance of the length under reparameterization implies $\len(X_c) = \len(X)$, and inequality \eqref{eq:energyLengthEstimate} from the previous lemma yields
	\begin{equation}
		\ener(X) \geq \frac{1}{b-a} [\len(X)]^2 = \frac{1}{b-a} [\len(X_c)]^2 = \ener(X_c)\,,
	\end{equation}
	where, again, equality holds if and only if $X$ has constant speed, i.e.\ $X = X_c$. Therefore, every minimizer of $\ener$ must have constant speed, and it remains to show that if $X$ has constant speed, then the equivalence
	\begin{equation}
		\ener(X) = \minYab \ener(Y) \iff \len(X) = \minYab \len(Y)
	\end{equation}
	holds. If we assume that $X=X_c$, then
	\begin{align}
		\ener(X) &= \minYab \ener(Y) \\
		\Rightarrow \len(Y) &= \len(Y_c) = \sqrt{b-a}\sqrt{\ener(Y_c)} \nonumber \geq \sqrt{b-a}\sqrt{\ener(X)} = \len(X) \qquad \forall\, Y\in\adsetAB([a,b]) \nonumber \\
		\Rightarrow \len(X) &= \minYab \len(Y)\,,\nonumber
	\intertext{as well as}
		\len(X) &= \minYab \len(Y) \\
		\Rightarrow \ener(Y) &\geq \frac{1}{b-a} [\len(Y)]^2 \geq \frac{1}{b-a}[\len(X)]^2 = \ener(X) \qquad \forall\, Y\in\adsetAB([a,b]) \nonumber \\
		\Rightarrow \ener(X) &= \minYab \ener(Y)\,,\nonumber
	\end{align}
	which concludes the proof.
\end{proof}

\subsection{Left-invariant, right-$\On$-invariant Riemannian metrics}

In the following, we will only consider Riemannian metrics that are left-invariant as well as right-invariant under $\On$.

\begin{definition}
A Riemannian metric $g$ on $\GLn$ is called \emph{left $\GLn$-invariant} (or simply \emph{left-invariant}) if
\begin{align}
	\fullg{CA}(CM,CN) = \fullg{A}(M,N)
\end{align}
for all $A,C \in \GLn$ and $M,N \in \gln$.
\end{definition}

The left-invariance of a Riemannian metric $g$ can be applied directly to the geodesic distance: let $A,B,C \in \GLn$. Then for every given curve $X \in \adsetAB$ connecting $A$ and $B$ we can define a curve $Y = CX \in \adset{(CA)}{(CB)}$ connecting $CA$ and $CB$ by $Y(t) = C X(t)$. We assume without loss of generality that $X$ is defined on the interval $[0,1]$ and find
\begin{align}
	\len(Y) &= \int\nolimits_0^1 \sqrt{\fullg{Y(t)}(\Ydot(t), \Ydot(t))} \: \dt \\
	&= \int\nolimits_0^1 \sqrt{\fullg{CX(t)}(C\Xdot(t), C\Xdot(t))} \: \dt \nonumber  = \int\nolimits_0^1 \sqrt{\fullg{X(t)}(\Xdot(t), \Xdot(t))} \: \dt  = \len(X)\,. \nonumber
\end{align}
Analogously, for every curve $Y$ connecting $CA$ and $CB$, the curve $X=C^{-1}Y$ connects $A$ and $B$ with $\len(X)$ = $\len(Y)$. Thus for every curve connecting $A$ and $B$, we can find a curve of equal length connecting $CA$ and $CB$ and vice versa. Therefore
\begin{equation}
	\dg(CA,CB) = \underset{Y \in \adset{(CA)}{(CB)}}{\inf} \len(Y) = \underset{X \in \adset{A}{B}}{\inf} \len(X) = \dg(A,B)\,.
\end{equation}
In particular, this left-invariance of the geodesic distance implies
\begin{align}
	\dg(A,B) &= \dg(\id,A^{-1}B)\,.
\end{align}
We will therefore often focus on the case $A = \id$. Since $A\inv B\in\GLpn$ for $A,B\in\GL^-(n)$, the geodesic distance on $\GL^-(n)$ is completely determined by the geodesic distance on $\GLpn$ for left-$\GLn$-invariant metrics.

\begin{definition}
A Riemannian metric $g$ on $\GLn$ is called \emph{right-$\On$-invariant} if
\begin{align}
	\fullg{AQ}(MQ,NQ) = \fullg{A}(M,N)
\end{align}
for all $A \in \GLn$, $M,N \in \gln$ and $Q \in \On$.
\end{definition}

Such invariant metrics appear in the theory of elasticity, where right-$\On$-invariance follows from material isotropy, while objectivity implies the left-invariance. We will show that a Riemannian metric satisfying both invariances is uniquely determined up to three parameters $\mu,\mu_c,\kappa > 0$ and given by an \emph{isotropic inner product} $\fullproduct{\cdot , \cdot}$ of the form
\begin{align}
	\fullproduct{M,N} \ratio&= \mu \innerproduct{\dev\sym M,\dev\sym N} + \mu_c \innerproduct{\skew M,\skew N} + \frac{\kappa}{n} (\tr M) (\tr N) \label{eq:definitionFullProduct}\\
		&= \mu \tr[(\dev\sym M)^T \dev\sym N] + \mu_c \tr[(\skew M)^T \skew N] + \frac{\kappa}{n} (\tr M) (\tr N)\,, \nonumber
\end{align}
where $\sym M = \half(M + M^T)$ denotes the symmetric part, $\skew M = \half(M - M^T)$ is the skew symmetric part and $\dev M = M - \frac{\tr M}{n} \id$ denotes the deviator of $M$. Here and throughout, $\innerproduct{M,N} = \tr(M^TN)$ denotes the canonical inner product on $\gln$.

We will state some basic properties of $\fullproduct{\cdot , \cdot}$. The proof can be found in the appendix (Lemma \ref{lemma:isotropicProductLemmaAppendix}).

\begin{lemma}
	Let $M,N \in \gln$. Then
	\begin{alignat}{2}
		&\text{i)} &&\fullproduct{Q^TMQ, Q^TNQ} = \fullproduct{M,N} \quad \text{ for all } Q \in \On\,, \label{eq:isotropicProperty} \\
		&\text{ii)} &&\fullproduct{M, N} = \mu \innerproduct{\dev\sym M, N} + \mu_c \innerproduct{\skew M, N} + \frac{\kappa}{n} \tr M \tr N\,, \nonumber \\
		&&& \hphantom{\fullproduct{M, N}} = \innerproduct{\mu \dev\sym M + \mu_c \skew M + \frac{\kappa}{n} \tr (M) \cdot \id\,,\,N}  \label{eq:oneSidedIsoProduct}\\
		&\text{iii)} &&\innerproduct{M, N}_{1,1,1} = \innerproduct{M,N}\,, \label{eq:canonicalInnerProductSpecialCaseOfIsoProduct}\\
		&\text{iv)}\quad &&\fullproduct{S, W} = 0 \quad \text{ for all } S \in \Symn,\: W \in \son\,,
	\end{alignat}
	where $\Symn$ and $\son$ denote the sets of symmetric and skew symmetric matrices in $\Rnn$ respectively.
\end{lemma}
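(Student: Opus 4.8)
The plan is to reduce all four claims to two elementary observations: how the additive decomposition $M = \dev\sym M + \tfrac{\tr M}{n}\id + \skew M$ transforms under conjugation by orthogonal matrices, and the mutual $\innerproduct{\cdot,\cdot}$-orthogonality of the three subspaces consisting of trace-free symmetric matrices, scalar multiples of $\id$, and skew matrices. For i), I would first record that for $Q \in \On$ one has $(Q^TMQ)^T = Q^TM^TQ$, hence $\sym(Q^TMQ) = Q^T(\sym M)Q$ and $\skew(Q^TMQ) = Q^T(\skew M)Q$; moreover $\tr(Q^TMQ) = \tr(QQ^TM) = \tr M$ and $Q^T\id\,Q = \id$, so $\dev(Q^TMQ) = Q^T(\dev M)Q$ and therefore $\dev\sym(Q^TMQ) = Q^T(\dev\sym M)Q$. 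Combined with $\innerproduct{Q^TAQ,Q^TBQ} = \tr(Q^TA^TQ\,Q^TBQ) = \tr(A^TB) = \innerproduct{A,B}$, substituting these identities term by term into the definition \eqref{eq:definitionFullProduct} of $\fullproduct{\cdot,\cdot}$ yields i) at once.

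For ii) I would use that $\dev\sym M$ is trace-free symmetric, $\tfrac{\tr M}{n}\id$ is a multiple of $\id$, and $\skew M$ is skew, and that these three subspaces are pairwise orthogonal with respect to $\innerproduct{\cdot,\cdot}$: a symmetric matrix $S$ and a skew matrix $W$ satisfy $\innerproduct{S,W} = \tr(SW) = \tr((SW)^T) = -\tr(SW)$, hence $\innerproduct{S,W}=0$, while a trace-free matrix $A$ satisfies $\innerproduct{A,\id} = \tr A = 0$. Since $N - \dev\sym N = \tfrac{\tr N}{n}\id + \skew N$ is then orthogonal to $\dev\sym M$, we get $\innerproduct{\dev\sym M,\dev\sym N} = \innerproduct{\dev\sym M,N}$; similarly $\innerproduct{\skew M,\skew N} = \innerproduct{\skew M,N}$; and $\tfrac{\kappa}{n}(\tr M)(\tr N) = \tfrac{\kappa}{n}(\tr M)\innerproduct{\id,N}$. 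Summing the three rewritten terms gives the first displayed line of ii), and collecting the first slot by bilinearity of $\innerproduct{\cdot,\cdot}$ gives the second.

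Claim iii) is the special case $\mu=\mu_c=\kappa=1$ of the second line of ii), since $\dev\sym M + \tfrac{\tr M}{n}\id + \skew M = \sym M + \skew M = M$. For iv), if $S\in\Symn$ then $\skew S = 0$, so the one-sided form from ii) reads $\fullproduct{S,W} = \innerproduct{\mu\dev\sym S + \tfrac{\kappa}{n}(\tr S)\id,\,W}$; the left argument is symmetric and the right one skew, so by the orthogonality just used the expression vanishes. I do not expect a genuine obstacle here — every individual step is a one-line linear-algebra computation — the only point needing a little care is setting up the orthogonal splitting $\Rnn = \dev\Symn \oplus \R\,\id \oplus \son$ cleanly and checking that it really makes the "one-sided" rewriting in ii) legitimate, since that rewriting is what then drives both iii) and iv).
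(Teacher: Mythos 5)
Your proposal is correct and follows essentially the same route as the paper's appendix proof: both rest on the $\innerproduct{\cdot,\cdot}$-orthogonal splitting of $\gln$ into trace-free symmetric, scalar, and skew parts, the equivariance of these parts under orthogonal conjugation, and the one-sided rewriting in (ii) feeding (iii) and (iv). The only cosmetic difference is in (iv), where the paper first uses symmetry of $\fullproduct{\cdot,\cdot}$ to place the skew matrix $W$ in the first slot (so only the $\mu_c\skew W$ term survives) while you keep $S$ there (so only the symmetric terms survive); both collapse to the same symmetric-versus-skew orthogonality.
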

In particular, \eqref{eq:canonicalInnerProductSpecialCaseOfIsoProduct} implies that the canonical inner product can be interpreted as a special case of $\fullproduct{\cdot , \cdot}$ with $\mu = \mu_c = \kappa = 1$.
Furthermore we will denote by
\begin{equation}
	\fullnorm{M} = \sqrt{\fullproduct{M,M}} = \sqrt{\mu\norm{\dev\sym M}^2 + \muc\norm{\skew M}^2 + \frac{\kappa}{n}\tr(M)^2}
\end{equation}
the norm induced by $\fullproduct{\cdot , \cdot}$. Here and throughout, $\norm{M} = \sum_{i,j=1}^n M_{ij}^2$ denotes the canonical matrix norm.

We can now show the connection between the isotropic inner product and left-invariant, right-$\On$-invariant Riemannian metrics.

\begin{proposition}
	~\\
	i) A Riemannian metric $g$ on $\GLn$ is left-invariant if and only if $g$ is of the form
	\begin{equation}
		\fullg{A}(M,N) = \innerproduct{A^{-1}M,A^{-1}N}_\ast\,,
	\end{equation}
	where $\innerproduct{\cdot,\cdot}_\ast$ is an inner product on the tangent space $\gln = T_\id \GLn$ at the identity $\id$.
	\\[2mm]
	ii) A left-invariant metric $g$ is additionally right-$\On$-invariant if and only if $g$ is of the form
	\begin{equation}
		\fullg{A}(M,N) = \fullproduct{A^{-1}M,A^{-1}N} \label{eq:leftinvRightSOinvMetric}
	\end{equation}
	with $\mu, \mu_c, \kappa > 0$.
\end{proposition}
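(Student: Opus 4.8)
The plan is to prove each equivalence by checking the easy implication by direct substitution and the harder one by restricting the metric to the identity. For part~(i), if $\fullg{A}(M,N) = \innerproduct{A^{-1}M,A^{-1}N}_\ast$, then $g$ is smooth because $A\mapsto A^{-1}$ is smooth on $\GLn$ (its entries being rational functions of the entries of $A$ with denominator $\det A\neq0$) and $\innerproduct{\cdot,\cdot}_\ast$ is bilinear; for fixed $A$ the form $(M,N)\mapsto\innerproduct{A^{-1}M,A^{-1}N}_\ast$ is symmetric, bilinear and positive definite since $M\mapsto A^{-1}M$ is a linear automorphism of $\gln$; and left-invariance is the one-line computation $\fullg{CA}(CM,CN) = \innerproduct{(CA)^{-1}CM,(CA)^{-1}CN}_\ast = \innerproduct{A^{-1}M,A^{-1}N}_\ast = \fullg{A}(M,N)$. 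Conversely, given a left-invariant $g$, I would set $\innerproduct{M,N}_\ast \colonequals \fullg{\id}(M,N)$, which is an inner product on $\gln$ by the definition of a Riemannian metric, and recover the claimed form by applying left-invariance at the base point $\id$ with $C = A$: $\fullg{A}(M,N) = \fullg{A\cdot\id}\big(A(A^{-1}M),A(A^{-1}N)\big) = \fullg{\id}(A^{-1}M,A^{-1}N) = \innerproduct{A^{-1}M,A^{-1}N}_\ast$.

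For part~(ii), the \enquote{if} direction reduces to the computation $\fullg{AQ}(MQ,NQ) = \fullproduct{(AQ)^{-1}MQ,(AQ)^{-1}NQ} = \fullproduct{Q^T(A^{-1}M)Q,Q^T(A^{-1}N)Q} = \fullproduct{A^{-1}M,A^{-1}N} = \fullg{A}(M,N)$, where I use $Q^{-1}=Q^T$ for $Q\in\On$ and property~\eqref{eq:isotropicProperty} of the isotropic inner product; that $g$ is a left-invariant Riemannian metric at all follows from part~(i), since $\mu,\mu_c,\kappa>0$ is exactly what makes $\fullproduct{\cdot,\cdot}$ an inner product. For the converse, I would start from the representation $\fullg{A}(M,N) = \innerproduct{A^{-1}M,A^{-1}N}_\ast$ provided by part~(i) and evaluate right-$\On$-invariance at $A=\id$, which yields $\innerproduct{Q^TMQ,Q^TNQ}_\ast = \innerproduct{M,N}_\ast$ for all $Q\in\On$ and $M,N\in\gln$; thus $\innerproduct{\cdot,\cdot}_\ast$ is invariant under the conjugation action $M\mapsto Q^TMQ$ of $\On$ on $\gln = \Rnn$. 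Writing $\innerproduct{M,N}_\ast = \innerproduct{HM,N}$ for the unique $\innerproduct{\cdot,\cdot}$-self-adjoint, positive definite operator $H$ on $\gln$, this invariance is equivalent to $H$ commuting with every conjugation $M\mapsto Q^TMQ$.

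It then remains to show that such an $H$ must be of the form $H = \mu\,P_1 + \mu_c\,P_2 + \kappa\,P_0$, where $P_0,P_1,P_2$ denote the orthogonal projections of $\Rnn = \R\cdot\id \oplus \dev\Symn \oplus \son$ onto its three (conjugation-invariant, pairwise orthogonal) summands and $\mu,\mu_c,\kappa\in\R$; substituting back and using orthogonality of the three summands gives $\innerproduct{HM,N} = \mu\innerproduct{\dev\sym M,\dev\sym N} + \mu_c\innerproduct{\skew M,\skew N} + \frac{\kappa}{n}\tr M\,\tr N = \fullproduct{M,N}$, and the positive definiteness of $\innerproduct{\cdot,\cdot}_\ast$ then forces $\mu,\mu_c,\kappa>0$, which is precisely \eqref{eq:leftinvRightSOinvMetric}.

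This classification step is the main obstacle. Conceptually it is Schur's lemma applied to the three summands — which are pairwise non-isomorphic $\On$-representations, the first trivial and the other two irreducible of distinct dimensions — but in the spirit of the paper I would give it elementarily: testing the identity $\innerproduct{H(Q^TMQ),Q^TNQ} = \innerproduct{HM,N}$ against the sign-change matrices $Q = \id - 2\,e_i e_i^T$ and the coordinate-permutation matrices kills the cross terms between $\Symn$ and $\son$ and the off-block entries, reducing $H$ to a handful of scalars, and $45^\circ$ rotations in the coordinate planes pin down the remaining coefficients, leaving exactly the three free parameters $\mu$, $\mu_c$, $\kappa$.
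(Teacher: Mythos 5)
Your proof of part (i) and of the \enquote{if} direction of part (ii) is essentially identical to the paper's; the only difference is that you also explicitly check smoothness and positive definiteness of the resulting bilinear form, which the paper leaves tacit. Where you genuinely diverge is in the last step of the converse of (ii). Both you and the paper reduce to the invariance $\innerproduct{Q^TMQ,Q^TNQ}_\ast = \innerproduct{M,N}_\ast$ of the inner product at the identity; at that point the paper simply cites a known representation formula for isotropic linear maps on $\Rnn$ (de Boor's result) and stops, whereas you outline a proof of that formula from scratch. Your route — encode $\innerproduct{\cdot,\cdot}_\ast$ via a self-adjoint positive definite $H$, observe that conjugation-invariance means $H$ commutes with every $M\mapsto Q^TMQ$, decompose $\Rnn = \R\cdot\id \oplus \dev\Symn \oplus \son$ into pairwise non-isomorphic $\On$-invariant summands, and conclude by Schur's lemma (or the elementary version with sign flips, permutations, and $45^\circ$ rotations) that $H = \kappa P_0 + \mu P_1 + \mu_c P_2$ — is correct and, for $n\ge 2$, gives a self-contained argument in the spirit the paper otherwise aims for. (For $n=1$ the summands $\dev\Sym(1)$ and $\so(1)$ are zero, so the claim degenerates to the single parameter $\kappa$, consistent with the formula.) In short: same structure throughout, but your proposal replaces the paper's citation at the crux of (ii) with an actual proof, which buys self-containment at the cost of length; the paper's choice buys brevity by delegating the classification to the literature.
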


\begin{proof}
	i) Let $\innerproduct{\cdot , \cdot}_\ast$ be an inner product on $\gln$ and $g$ be defined by $\fullg{A}(M,N) = \innerproduct{A^{-1}M,A^{-1}N}_\ast$. Then we find
	\begin{align}
		\fullg{CA}(CM,CN) &= \innerproduct{(CA)^{-1}CM,(CA)^{-1}CN}_\ast \\
		&= \innerproduct{A^{-1}C^{-1}CM,A^{-1}C^{-1}CN}_\ast \nonumber \;=\; \innerproduct{A^{-1}M,A^{-1}N}_\ast \;=\; g_A(M,N)\,, \nonumber 
	\end{align}
	hence $g$ is left-invariant.
	
	Now let $g$ be an arbitrary left-invariant Riemannian metric on $\GLn$. Then
	\begin{equation}
		\innerproduct{M,N}_\ast \colonequals \fullg{\id}(M,N)
	\end{equation}
	defines an inner product on $\gln$, and we find
	\begin{align}
		g_A(M,N) &= \fullg{A\cdot\id}(AA^{-1}M,\,AA^{-1}N) = g_\id(A^{-1}M,\,A^{-1}N) = \innerproduct{A^{-1}M,\,A^{-1}N}_\ast\,. \nonumber
	\end{align}
	ii) If $g$ is defined by $g_A(M,N) = \fullproduct{A^{-1}M, A^{-1}N}$, then $g$ is left-invariant according to i). Let $Q \in \On$. Then
	\begin{align}
		\fullg{AQ}(MQ,NQ) &= \fullproduct{(AQ)^{-1}MQ, (AQ)^{-1}NQ} = \fullproduct{Q^T A^{-1} MQ, Q^T A^{-1} NQ}\,. \nonumber
	\end{align}
	We apply the isotropy property \eqref{eq:isotropicProperty} to find
	\begin{equation}
		\fullproduct{Q^T A^{-1} MQ, Q^T A^{-1} NQ} = \fullproduct{A^{-1} M, A^{-1} N} = g_A(M,N)\,,
	\end{equation}
	which implies the right-$\On$-invariance of $g$.
	
	Finally, let $g$ be an arbitrary left-invariant, right-$\On$-invariant metric on $\GLn$. Again we define the inner product $\innerproduct{\cdot , \cdot}_\ast$ by $\innerproduct{M,N}_\ast \colonequals \fullg{\id}(M,N)$. Then for every $Q\in\On$
	\begin{align}
		\innerproduct{Q^T MQ, Q^T NQ}_\ast &= \fullg{\id}(Q^T MQ, Q^T NQ) \label{eq:isotropyOfTheInnerProduct}\\
		&= \fullg{Q \id}(MQ, NQ) = \fullg{\id Q}(MQ, NQ) = \fullg{\id}(M, N) = \innerproduct{M,N}_\ast\,. \nonumber
	\end{align}
	According to a well-known representation formula for isotropic linear mappings on $\Rnn$ \cite{Boor1985}, this invariance directly implies that $\innerproduct{\cdot , \cdot}_\ast$ has the desired form \eqref{eq:definitionFullProduct}.
\end{proof}

In the following sections $g$ will denote a left-invariant, right-$\On$-invariant Riemannian metric as given in \eqref{eq:leftinvRightSOinvMetric}, unless otherwise indicated.

\section{Energy minimizers and the calculus of variations}
\label{section:calculusOfVariations}
In the theory of Lie Groups and, more generally, Riemannian geometry, it can be shown that every length minimizing curve on a sufficiently smooth Riemannian manifold is a geodesic \cite[Corollary 3.9]{docarmo1992}. However, by focussing solely on the considered special case, we can circumvent the methods of abstract Riemannian geometry (such as the Levi-Civita-connection) and find a differential equation characterizing the minimizing curves by a straightforward application of the classical calculus of variations. A similar approach can be found in \cite{LeeCKP2007}, where a geodesic equation similar to \eqref{eq:canonicalDifferentialEquation} is computed for the \emph{right-invariant} Riemannian metric induced by the canonical inner product on $\gln$.\\
In preparation we need the following two lemmas. For any continuously differentiable function $f\in C^1(\GLn;\GLn)$, we denote by $Df[A]$ the total derivative of $f$ at $A\in\GLn$, and $Df[A].H\in\gln$ denotes its directional derivative at $A$ in direction $H$.

\begin{lemma}
\label{lemma:invDerivative}
	Define $f\in C^1(\GLn,\GLn)$ by $f(X)=X\inv$. Then
	\begin{equation}
		Df[A].H = -A\inv H A\inv\,.
	\end{equation}
\end{lemma}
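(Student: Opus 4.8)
The plan is to prove the formula for the derivative of matrix inversion directly from the definition of the total derivative, using the fact that $\GLn$ is an open subset of $\Rnn$ so the differentiability notions are the classical ones. The central identity is $f(X)\cdot X = \id$, valid for all $X \in \GLn$; differentiating this relation will produce the claimed expression without any need to write down power series or Neumann expansions.

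First I would fix $A \in \GLn$ and $H \in \gln = \Rnn$, and recall that $Df[A].H = \frac{\diff}{\dt}\big|_{t=0} f(A + tH)$, which is well-defined since $A + tH \in \GLn$ for $|t|$ small (openness of $\GLn$). Applying the product rule to the identity $f(A+tH)\cdot(A+tH) = \id$, which holds for all small $t$, and evaluating the derivative at $t = 0$, I get $(Df[A].H)\cdot A + f(A)\cdot H = 0$, i.e. $(Df[A].H)\cdot A + A\inv H = 0$. Solving for $Df[A].H$ by right-multiplication with $A\inv$ yields $Df[A].H = -A\inv H A\inv$, which is the assertion. To make the product-rule step rigorous one only needs that $f$ is differentiable at $A$ in the first place; this follows from Cramer's rule, since the entries of $X\inv$ are rational functions of the entries of $X$ with nonvanishing denominator $\det X$ on $\GLn$, hence $f \in C^\infty(\GLn;\GLn) \subset C^1(\GLn;\GLn)$.

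Alternatively, if one prefers not to invoke the product rule for the $t$-dependent curve, one can argue with the first-order expansion directly: write $f(A+tH) = (A(\id + tA\inv H))\inv = (\id + tA\inv H)\inv A\inv$ and expand $(\id + tA\inv H)\inv = \id - tA\inv H + o(t)$ for small $t$, so that $f(A+tH) = A\inv - tA\inv H A\inv + o(t)$, from which the directional derivative can be read off. Either route is short; I would present the product-rule version as the main argument since it is the cleanest.

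I do not anticipate a genuine obstacle here — the only subtlety worth a sentence is the justification that $f$ is actually differentiable (so that the total derivative exists and the chain/product rule applies), which is immediate from Cramer's rule as noted above. Everything else is a one-line computation.
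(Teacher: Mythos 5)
Your proof is correct. Note that the paper does not actually supply a proof of this lemma; it merely refers the reader to an external source (the cited lecture notes). Your argument --- differentiating the identity $f(A+tH)\cdot(A+tH)=\id$ via the product rule, solving for $Df[A].H$, and grounding the existence of the derivative in Cramer's rule --- is the standard self-contained proof, and the Neumann-expansion variant you mention as an alternative is equally sound. Either version would serve as a complete replacement for the reference the paper gives.
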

\begin{proof}
	A short proof can be found in \cite{NeffCISMnotes}.
\end{proof}

\begin{lemma}
Let $M,N,P\in\gln$. Then
	\begin{align}
		\innerproduct{M,PN} = \innerproduct{P^TM,N} \qquad\text{and}\qquad
		\innerproduct{M,NP} = \innerproduct{MP^T,N}\,.
	\end{align}
\end{lemma}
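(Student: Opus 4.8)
The plan is to reduce everything to the definition of the canonical inner product as a trace and then exploit the two elementary identities $(AB)^T = B^TA^T$ and the cyclic invariance $\tr(AB) = \tr(BA)$, both of which hold for arbitrary matrices in $\Rnn$ regardless of invertibility, so that the statement really is about $\gln = \Rnn$ as a linear space and no structure of $\GLn$ is needed.

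For the first identity I would start from $\innerproduct{M,PN} = \tr(M^T (PN))$ and, using associativity of matrix multiplication, rewrite this as $\tr((M^TP)N)$. Since $M^TP = (P^TM)^T$, the right-hand side equals $\tr((P^TM)^T N) = \innerproduct{P^TM, N}$, which is the claim. For the second identity I would likewise write $\innerproduct{M,NP} = \tr(M^T N P)$ and apply cyclicity of the trace to move $P$ to the front: $\tr(M^TNP) = \tr(P M^T N) = \tr((MP^T)^T N) = \innerproduct{MP^T, N}$, again using $(MP^T)^T = P M^T$. Alternatively, the second identity can be obtained from the first by transposition, since $\innerproduct{A,B} = \innerproduct{A^T,B^T}$ and $(NP)^T = P^TN^T$; either route is equally short.

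I do not expect any genuine obstacle here: the whole argument is a two-line manipulation of traces, and the only points worth stating explicitly are that $\innerproduct{\cdot,\cdot}$ is symmetric (so that the placement of the transpose on either argument is a matter of convention) and that the trace is linear and cyclic. The lemma will be used repeatedly in the subsequent variational computations to shift the factors $A^{-1}$ appearing in the left-invariant metric \eqref{eq:leftinvRightSOinvMetric} from one slot of the inner product to the other, so it is convenient to record both the left-multiplication and the right-multiplication versions at once.
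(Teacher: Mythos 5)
Your proof is correct and follows essentially the same route as the paper's: both reduce the inner product to a trace and manipulate it via the transpose rule and cyclic invariance. The only cosmetic difference is that the paper writes the inner product in the equivalent form $\tr(B^TA)$ while you keep $\tr(A^TB)$ throughout; the underlying computation is identical.
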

\begin{proof}
Direct computation yields
\begin{align*}
	\innerproduct{M,PN} &= \tr((PN)^TM) = \tr(N^T(P^TM)) = \innerproduct{P^TM,N}\,,\\
	\innerproduct{M,NP} &= \tr((NP)^TM) = \tr(P^TN^TM) = \tr(N^T(MP^T)) = \innerproduct{MP^T,N}\,.\qedhere
\end{align*}
\end{proof}

\subsection{The geodesic equation for the canonical inner product}
\label{subsection:canonicalDifferentialEquation}
First, assume that $\fullproduct{\cdot,\cdot}$ is the canonical inner product on $\gln=\Rnn$, i.e.\ that $\mu=\mu_c=\kappa=1$. Let $X\in\adsetAB\,\cap C^2([0,1];\GLp(n))$ be a two-times differentiable length minimizing curve from $[0,1]$ to $\GLp(n)$, where the length is measured by the left-invariant Riemannian metric $g$ with
\begin{align}
	g_Z(M,N) &\colonequals \innerproduct{Z^{-1}M, Z^{-1}N} = \tr((Z^{-1}M)^TZ^{-1}N)\nonumber\,,
\end{align}
and assume without loss of generality (Lemma \ref{lemma:constantSpeed}) that X has constant speed. Then it follows from Lemma \ref{lemma:energyLengthMinimizerRelation} that X also minimizes the energy functional $E$. We will characterize the minimizer $X$ by the Euler–Lagrange equation corresponding to the energy, which we will call the \emph{geodesic equation}, in consistence with a more general differential geometric definition of geodesics \cite[Definition 1.4.2]{Jost1998}. Let
\begin{equation}
	C^1_0([0,1];\gln) \colonequals \{\deltaX\in C^1([0,1];\gln) \setvert \supp(\deltaX)\subset(0,1)\}
\end{equation}
denote the set of \emph{variations} in $\gln$ with compact support. For a fixed variation $\deltaX\in C^1_0([0,1];\gln)$, define $Z_h \colonequals X + h\, \deltaX$ for $h\in (-\eps, \eps)$. For sufficiently small $\eps$, we have $Z_h(t)\in \GLp(n)$ for all $t\in[0,1]$ and therefore $Z_h\in \adsetAB$ (note that $\deltaX(0)=\deltaX(1)=0$ and thus $Z_h(0)=X(0)=A, Z_h(1)=B$). The minimizing property of X implies the stationarity condition
\begin{align}
	0 = \frac{1}{2}\evalhzero{\ddh \ener(X+h\,\deltaX)} &= \frac{1}{2}\ddh\int\nolimits_0^1 \evalhzero{\innerproduct{{Z_h}^{-1} \dot{Z}_h, {Z_h}^{-1} \dot{Z}_h} \,\dt}\nonumber\\
	&= \frac{1}{2} \int\nolimits_0^1 \ddh \evalhzero{\innerproduct{{Z_h}^{-1} \dot{Z}_h, {Z_h}^{-1} \dot{Z}_h}} \dt\nonumber\\
	&= \int\nolimits_0^1 \,\innerproduct{\evalhzero{{Z_h}^{-1} \dot{Z}_h}, \ddh\evalhzero{{Z_h}^{-1} \dot{Z}_h}} \,\dt\,. \label{eq:localVariationalCalcMiddle}
\end{align}
Since $\evalhzero{{Z_h}^{-1}}=X\inv$ and $\evalhzero{\dot{Z}_h}=\Xdot$, we can use Lemma \ref{lemma:invDerivative} to find
\begin{align*}
	\evalhzero{\ddh{Z_h}^{-1}} = \evalhzero{\ddh(X+h\deltaX)\inv} = \evalhzero{-(X+h\deltaX)\inv \deltaX (X+h\deltaX)\inv} = X\inv \deltaX X\inv\,.
\end{align*}
Thus, with integration by parts, \eqref{eq:localVariationalCalcMiddle} computes to
\begin{align*}
	\int\nolimits_0^1 \innerproduct{X^{-1} \dot{X}, -X^{-1}\deltaX X^{-1}\dot{X} + X^{-1}\dot{\deltaX}} \,\dt &= \int\nolimits_0^1 -\innerproduct{X^{-T}X^{-1} \dot{X}, \deltaX X^{-1}\dot{X}} + \innerproduct{X^{-T}X^{-1} \dot{X}, \dot{\deltaX}} \,\dt\nonumber\\
	&= \int\nolimits_0^1 -\innerproduct{X^{-T}X^{-1} \dot{X} (X^{-1}\dot{X})^T, \deltaX} - \innerproduct{\ddt (X^{-T}X^{-1} \dot{X}), \deltaX} \,\dt \nonumber\\
	&= -\int\nolimits_0^1 \innerproduct{X^{-T}X^{-1} \dot{X} (X^{-1}\dot{X})^T+\ddt (X^{-T}X^{-1} \dot{X}), \,\deltaX} \,\dt\,.
\end{align*}
Since this equation holds for all $\deltaX\in C^1_0([0,1];\gln)$, we can apply the fundamental lemma of the calculus of variations to obtain the differential equation
\begin{equation}
	\ddt (X^{-T}X^{-1}\dot{X}) = -X^{-T}X^{-1}\dot{X}(X^{-1}\dot{X})^T\,.\label{eq:simpleVariationBasicODE}
\end{equation}
We compute the left hand term (using the product rule for matrix valued functions \cite{NeffCISMnotes} as well as, again, Lemma \ref{lemma:invDerivative}):
\begin{align}
	\ddt (X^{-T}X^{-1}\dot{X}) &= (-X^{-T}\dot{X}^TX^{-T})(X^{-1}\dot{X}) + X^{-T}\ddt(X^{-1}\dot{X})\nonumber\\
	&= -X^{-T}\dot{X}^TX^{-T}X^{-1}\dot{X} + X^{-T}(-X^{-1}\dot{X}X^{-1}\dot{X} + X^{-1}\ddot{X})\nonumber\\
	&= X^{-T}(-\dot{X}^TX^{-T}X^{-1}\dot{X} - X^{-1}\dot{X}X^{-1}\dot{X} + X^{-1}\ddot{X})\,. \label{eq:simpleVariationExpandedDerivative}
\end{align}
To simplify notation we define $\tangent \colonequals X^{-1}\dot{X}$. Then
\begin{equation}
	\dot{\tangent} = -X^{-1}\dot{X}X^{-1}\dot{X} + X^{-1}\ddot{X} = -\tangent\tangent + X^{-1}\ddot{X}\,,\label{eq:tangentDerivative}
\end{equation}
and combining \eqref{eq:simpleVariationBasicODE} with \eqref{eq:simpleVariationExpandedDerivative} we obtain
\begin{alignat}{2}
& & -X^{-T}\overbrace{X^{-1}\dot{X}}^{=\tangent} \overbrace{(X^{-1}\dot{X})^T}^{=\tangent^T} &= X^{-T}(-\overbrace{(X^{-1}\dot{X})^TX^{-1}\dot{X}}^{=\tangent^T\tangent} - \overbrace{X^{-1}\dot{X}X^{-1}\dot{X}}^{=\tangent\tangent} + X^{-1}\ddot{X})\nonumber\\
&\Leftrightarrow & -\tangent\tangent^T &= -\tangent^T \tangent - \tangent\tangent + X^{-1}\ddot{X}\nonumber\\
&\Leftrightarrow & \tangent^T \tangent - \tangent\tangent^T &= -\tangent\tangent + X^{-1}\ddot{X}\nonumber\\
&\overset{\mathclap{\eqref{eq:tangentDerivative}}}{\Leftrightarrow} & \dot{\tangent} &= \tangent^T \tangent - \tangent\tangent^T.\label{eq:canonicalDifferentialEquation}
\end{alignat}
Equation \eqref{eq:canonicalDifferentialEquation} can also be written as
$
	\dot\tangent = [\tangent^T\!, \tangent]\:,
$
where
$
	[A,B]=AB-BA
$
denotes the \emph{commutator} on $\gln$. Vandereycken et al.\ \cite{Vandereycken2010} give an analogous equation for the case of the canonical right-invariant metric.

\subsection{The geodesic equation for the general metric}
\label{subsection:generalDifferentialEquation}
Let us now consider the general case of arbitrary $\mu,\mu_c,\kappa>0$. In order to find the geodesic equation for this case, we need the following lemma.

\begin{lemma}
	\label{lemma:productOfCommutators}
	Let $A,B\in \gln$. Then
	\begin{equation}
		\fullproduct{A,[A,B]} = \frac{\mu+\mu_c}{2}\eucproduct{A,[A,B]}\,.
	\end{equation}
\end{lemma}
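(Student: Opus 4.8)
The plan is to reduce everything to the canonical inner product $\eucproduct{\cdot,\cdot}$ and to exploit the algebraic structure of the commutator together with the orthogonality relation $\fullproduct{S,W}=0$ for $S\in\Symn$, $W\in\son$ from \eqref{eq:oneSidedIsoProduct}. First I would decompose $A = S + W$ with $S = \sym A$ and $W = \skew A$, and observe that $[A,B]$ is the relevant quantity only through the \emph{first} argument of $\fullproduct{\cdot,\cdot}$; indeed, by \eqref{eq:oneSidedIsoProduct}, $\fullproduct{A,[A,B]} = \eucproduct{\mu\dev\sym A + \mu_c\skew A + \tfrac{\kappa}{n}\tr(A)\id,\,[A,B]}$. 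So the task becomes computing $\eucproduct{\dev\sym A,[A,B]}$, $\eucproduct{\skew A,[A,B]}$ and $\eucproduct{\id,[A,B]}$ separately.

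The key observations are: (i) $\eucproduct{\id,[A,B]} = \tr([A,B]) = \tr(AB) - \tr(BA) = 0$, so the $\kappa$-term drops out entirely; and (ii) the commutator $[A,B]$ decomposes, when paired against $A$, into a symmetric part that only sees $\sym A$ and a skew part that only sees $\skew A$. Concretely, using $\eucproduct{M,N} = \eucproduct{\sym M,\sym N} + \eucproduct{\skew M,\skew N}$, I would write $\eucproduct{\dev\sym A,[A,B]} = \eucproduct{\dev\sym A, \sym[A,B]}$ (the deviatoric part is already captured since $\eucproduct{\id,[A,B]}=0$ kills the trace correction) and $\eucproduct{\skew A,[A,B]} = \eucproduct{\skew A,\skew[A,B]}$. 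The heart of the computation is then the elementary identity, valid for $A = S+W$:
\begin{equation*}
	\eucproduct{S,[A,B]} = \eucproduct{W,[A,B]} = \tfrac12\,\eucproduct{A,[A,B]}\,,
\end{equation*}
which I would verify by expanding $[A,B] = [S,B] + [W,B]$ and using the cyclicity of the trace together with the facts that $\eucproduct{S,[S,B]} = \eucproduct{W,[W,B]} = 0$ (since $[S,B]$, resp.\ $[W,B]$, contributes only to the opposite symmetry class when paired with $S$, resp.\ $W$) — or, more directly, from $\eucproduct{A,[A,B]} = \eucproduct{S+W,[A,B]}$ and the observation that $\eucproduct{S,[A,B]}$ is unchanged if we replace $[A,B]$ by its symmetric part while $\eucproduct{W,[A,B]}$ picks out the skew part, and that $\eucproduct{A,[A,B]} = \eucproduct{A,\sym[A,B]} + \eucproduct{A,\skew[A,B]}$ with each summand equal to the corresponding half.

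Putting this together, $\fullproduct{A,[A,B]} = \mu\,\eucproduct{\dev\sym A,[A,B]} + \mu_c\,\eucproduct{\skew A,[A,B]} = \mu\cdot\tfrac12\eucproduct{A,[A,B]} + \mu_c\cdot\tfrac12\eucproduct{A,[A,B]} = \tfrac{\mu+\mu_c}{2}\eucproduct{A,[A,B]}$, as claimed; the step $\eucproduct{\dev\sym A,[A,B]} = \eucproduct{\sym A,[A,B]}$ is justified by $\tr[A,B]=0$. The main obstacle — really the only thing requiring care — is bookkeeping the symmetric/skew splitting of $[A,B]$ against $A$ and checking that the cross terms $\eucproduct{S,[S,B]}$ and $\eucproduct{W,[W,B]}$ vanish (equivalently, that $\sym A$ pairs to zero with the symmetric part of $[\sym A,B]$ in the relevant combination); this is a two-line trace manipulation but is where an error would most easily slip in, so I would write it out carefully using $\eucproduct{P,[Q,R]} = \eucproduct{[R,P],Q}$-type rearrangements.
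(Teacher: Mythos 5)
Your proposal is correct, and the end computation it leads to is the same as the paper's; the only difference is how the pivotal identity is organized. The paper shows $\eucproduct{\sym A,[A,B]} = \eucproduct{\skew A,[A,B]}$ in one stroke: since $\sym A - \skew A = A^T$, one has
\begin{equation*}
	\eucproduct{\sym A,[A,B]} - \eucproduct{\skew A,[A,B]} = \eucproduct{A^T,[A,B]} = \tr\bigl(A(AB-BA)\bigr) = \tr(AAB) - \tr(ABA) = 0
\end{equation*}
by cyclicity, after which halving gives $\eucproduct{\sym A,[A,B]} = \eucproduct{\skew A,[A,B]} = \tfrac12\eucproduct{A,[A,B]}$. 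You reach the same identity by the finer decomposition $[A,B]=[S,B]+[W,B]$, killing $\eucproduct{S,[S,B]}$ and $\eucproduct{W,[W,B]}$ and matching the cross terms $\eucproduct{S,[W,B]}=\eucproduct{W,[S,B]}$; summing those three facts is exactly $\eucproduct{A^T,[A,B]}=0$, so the content is identical, just distributed over more pieces. Your use of $\tr[A,B]=0$ to drop the $\kappa$-term and to replace $\dev\sym A$ by $\sym A$ is exactly what the paper does as well. One small caveat: the parenthetical reason you offer for $\eucproduct{S,[S,B]}=0$ and $\eucproduct{W,[W,B]}=0$ — that $[S,B]$ (resp.\ $[W,B]$) lands in the opposite symmetry class — is not correct for general $B$ (e.g.\ $[S,B]$ is skew only if $B$ is symmetric). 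The correct and sufficient reason, which you also name, is trace cyclicity: $\tr(S\cdot SB)=\tr(S\cdot BS)$ and $\tr(W^T\cdot WB)=\tr(W^T\cdot BW)$. With that fixed, the argument goes through and is essentially the paper's proof.
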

\begin{proof}
	Since $\tr(AB)=\tr(BA)$, we immediately see that
	\begin{equation}
		\tr(AB-BA) = \tr(AB)-\tr(BA)=0\,. \label{eq:traceCommutes}
	\end{equation}
	Therefore
	\begin{align}
		\eucproduct{\sym A,\sym(AB-BA)} &= \eucproduct{\dev\sym A,\dev\sym(AB-BA)}+\frac1n\,\tr A\, \tr (AB-BA)\nonumber\\
		&= \eucproduct{\dev\sym A,\dev\sym(AB-BA)}\label{eq:symProdEqualsDevSymProd}
	\end{align}
	and
	\begin{align}
		&\hspace{-7mm}\eucproduct{\sym A, \sym(AB-BA)} - \eucproduct{\skew A, \skew(AB-BA)}\nonumber \\
		&= \eucproduct{\sym A, \,AB-BA} - \eucproduct{\skew A, \,AB-BA}\nonumber\\
		&= \eucproduct{\sym A - \skew A,\, AB-BA} \;=\; \eucproduct{A^T, \,AB-BA} = \tr(AAB)-\tr(ABA) = 0\,,\label{eq:symSkewEqual}
	\end{align}
	thus
	\begin{align}
		\fullproduct{A,[A,B]} \;&=\; \fullproduct{A,(AB-BA)}\nonumber\\
		&\overset{\mathclap{\eqref{eq:traceCommutes}}}{=}\; \mu \eucproduct{\dev \sym A, \dev \sym (AB-BA)}+ \mu_c \eucproduct{\skew A, \skew (AB-BA)}\nonumber\\
		&\overset{\mathclap{\eqref{eq:symProdEqualsDevSymProd}}}{=}\; \mu \eucproduct{\sym A, \sym (AB-BA)}+ \mu_c \eucproduct{\skew A, \skew (AB-BA)}\,.\nonumber\\
		&\overset{\mathclap{\eqref{eq:symSkewEqual}}}{=}\; (\mu+\mu_c)(\eucproduct{\sym A, \sym (AB-BA)}\nonumber\\
		&\overset{\mathclap{\eqref{eq:symSkewEqual}}}{=}\; \frac{(\mu+\mu_c)}{2}(\eucproduct{\sym A, \sym (AB-BA)} + \eucproduct{\skew A, \skew (AB-BA)})\nonumber\\
		&=\; \frac{(\mu+\mu_c)}{2}\eucproduct{A, (AB-BA)}\,,\nonumber\qedhere\\\nonumber
	\end{align}
	which completes the proof.
\end{proof}
Let $X\in\adsetAB$ be a \emph{piecewise two-times differentiable} energy minimizer with regard to the Riemannian metric
\begin{align}
	g_Z(X,Y) &\colonequals \fullproduct{Z^{-1}X, Z^{-1}Y}\,,\nonumber\\
	\fullproduct{A,B} &\colonequals \mu \eucproduct{\dev \sym A, \dev \sym B}  + \mu_c \eucproduct{\skew A, \skew B} + \frac{\kappa}{n} \tr A \tr B \label{eq:generalProductDefinition}\,.
\end{align}
Note that, in contrast to Section \ref{subsection:canonicalDifferentialEquation}, we do not require $X$ to be differentiable everywhere, but $\eval{X}{[a_k,a_{k+1}]}\in C^2([a_k,a_{k+1}];\GLp(n))$ for $k\in\{1,\dotsc,m\}$. Furthermore, following an approach by Lee et al.\ \cite{LeeCKP2007}, we consider a different variation: for $Y_h \colonequals X(\id + h\deltaX)$, $\deltaX\in C^1_0([0,1];\gln)$, the equation
\begin{equation}
	0 = \frac{1}{2}\evalhzero{\ddh \ener(Y_h)}\label{eq:alternateVariation}
\end{equation}
holds for the energy minimizer $X$. Again, define $\tangent \colonequals X^{-1}\dot{X}$, as well as $\tangent_h \colonequals {Y_h}^{-1}\dot{Y}_h$.
Then
\begin{align}
	&\hspace{-7mm}\,Y_h(\tangent+h\overbrace{(\tangent \deltaX-\deltaX\tangent+\dot{\deltaX})}^{\equalscolon C}) - h^2XV\overbrace{(\tangent \deltaX-\deltaX\tangent+\dot{\deltaX})}^{=C})\nonumber\\
	&= Y_h(\tangent + hC) - h^2XVC \nnl
	&= (X+hX\deltaX)(\tangent+hC) - h^2XVC\nonumber\\
	&= X\tangent + hX\deltaX\tangent + hXC + h^2 X\deltaX C - h^2XVC\nonumber\\
	&= \dot{X} + hX\deltaX\tangent + hX\tangent \deltaX - hX\deltaX\tangent + hX\dot{\deltaX}\nonumber\\
	&= \dot{X} + hX\tangent\deltaX + hX\dot{\deltaX} \;=\; \dot{X} + h\dot{X}\deltaX + hX\dot{\deltaX} \;=\; \dot{Y}_h\label{eq:localFullVariationYhdot}\,.
\end{align}
We therefore obtain
\begin{align}
	\tangent_h = Y_h\inv\dot{Y}_h \;\;&\overset{\mathclap{\eqref{eq:localFullVariationYhdot}}}{=}\;\; Y_h\inv(Y_h(\tangent + hC) - h^2XVC)\\
	&= \tangent + hC - h^2 \,Y_h\inv X\deltaX C)\nonumber\\
	&= \tangent + h(\tangent \deltaX-\deltaX\tangent+\dot{\deltaX}) - h^2 \,Y_h\inv X\deltaX C\,,
\end{align}
which implies
\begin{align}
	\evalhzero{\tangent_h} = \tangent, \qquad \ddh\evalhzero{\tangent_h} = \tangent \deltaX-\deltaX\tangent+\dot{\deltaX}\,. \label{eq:localFullVariationTangenth}
\end{align}
Coming back to equation \eqref{eq:alternateVariation}, we find that
\begin{align}
	0 = \frac{1}{2}\evalhzero{\ddh \ener(Y_h)} &= \frac{1}{2}\int\nolimits_0^1 \evalhzero{\ddh \fullproduct{{Y_h}^{-1}\dot{Y}_h , {Y_h}^{-1}\dot{Y}_h}} \dt\nonumber\\
	&= \frac{1}{2}\int\nolimits_0^1 \evalhzero{\ddh \fullproduct{\tangent_h,\tangent_h}} \dt\nonumber\\
	&= \int\nolimits_0^1 \evalhzero{\fullproduct{\tangent_h,\ddh \tangent_h}} \dt \overset{\eqref{eq:localFullVariationTangenth}}{=} \int\nolimits_0^1 \fullproduct{\tangent,\tangent \deltaX-\deltaX\tangent+\dot{\deltaX}} \,\dt\,.\label{eq:alternateVariationIntegralForm}
\end{align}
Using Lemma \ref{lemma:productOfCommutators}, we can write \eqref{eq:alternateVariationIntegralForm} as
\begin{align}
	&\hspace{-7mm}\int\nolimits_0^1 \fullproduct{\tangent,\dot{\deltaX}} + \fullproduct{\tangent,\tangent \deltaX-\deltaX\tangent} \,\dt\nonumber\\
	&= \int\nolimits_0^1 \fullproduct{\tangent,\dot{\deltaX}} + \frac{\mu+\mu_c}{2}\eucproduct{\tangent,\tangent \deltaX-\deltaX\tangent} \,\dt\nonumber\\
	&= \int\nolimits_0^1 \fullproduct{\tangent,\dot{\deltaX}} + \frac{\mu+\mu_c}{2}\eucproduct{\deltaX,\smash{\underbrace{\tangent^T\tangent-\tangent\tangent^T}_{\in \Symn}}} \,\dt\nonumber\\
	&= \int\nolimits_0^1 \fullproduct{\tangent,\dot{\deltaX}} + \frac{\mu+\mu_c}{2\mu}(\mu\eucproduct{\dev\sym \deltaX,\dev\sym (\tangent^T\tangent-\tangent\tangent^T)}\nonumber\\ &\qquad\qquad+\underbrace{\mu_c\eucproduct{\skew \deltaX,\skew (\tangent^T\tangent-\tangent\tangent^T)} + \frac{\kappa}{n}\tr \deltaX \tr(\tangent^T\tangent-\tangent\tangent^T))}_{=0} \,\dt\nonumber\\
	&= \int\nolimits_0^1 \fullproduct{\tangent,\dot{\deltaX}} + \frac{\mu+\mu_c}{2\mu}\fullproduct{\deltaX,\smash{\tangent^T\tangent-\tangent\tangent^T}} \,\dt\nonumber\\
	&= \sum_{k=0}^m \int\nolimits_{a_k}^{a_{k+1}} \fullproduct{\tangent,\dot{\deltaX}} + \frac{\mu+\mu_c}{2\mu}\fullproduct{\deltaX,\smash{\tangent^T\tangent-\tangent\tangent^T}} \,\dt\,,
\end{align}
where $a = a_0 < a_1 < \dots < a_{m+1} = b$ are chosen such that $\eval{X}{[a_j,a_{j+1}]} \in C^2([a_j,a_{j+1}];\GLn)$ for all $j\in\{0,\dotsc,m\}$.
For $\deltaX\in C^1_0([0,1];\gln)$ with $\deltaX(a_k)=0$ for $k\in\{1,\dotsc,m\}$, integration by parts yields
\begin{align}
	0 &= \sum_{k=0}^m \int\nolimits_{a_k}^{a_{k+1}} \fullproduct{\tangent,\dot{\deltaX}} + \frac{\mu+\mu_c}{2\mu}\fullproduct{\deltaX,\smash{\tangent^T\tangent-\tangent\tangent^T}} \,\dt\nonumber\\
	&= \sum_{k=0}^m \int\nolimits_{a_k}^{a_{k+1}} -\fullproduct{\smash{\deltaX},\dot\tangent} + \frac{\mu+\mu_c}{2\mu}\fullproduct{\deltaX,\smash{\tangent^T\tangent-\tangent\tangent^T}} \,\dt\,.
\end{align}
We can now apply the fundamental lemma of the calculus of variations to find
\begin{align}
	\dot{\tangent}(t) = \frac{1+\frac{\mu_c}{\mu}}{2}(\tangent(t)^T\tangent(t)-\tangent(t)\tangent(t)^T) \quad\: \forall \,t\in[0,1]\backslash\{a_1,\dotsc,a_m\}\,. \label{eq:generalDifferentialEquationNoRegularity}
\end{align}

To show that $X$ is two-times differentiable on $[0,1]$ and that equation \eqref{eq:generalDifferentialEquationNoRegularity} is satisfied everywhere, we first show that $\tangent = X^{-1}\dot X$ is continuous at each $a_k$ (and therefore on the whole interval $[0,1]$). We choose $\deltaX\in C^1_0((a_{k-1},a_{k+1});\gln)$ for $k\in\{1,\dotsc,m\}$ and compute
\begin{align}
	0 &= \int\nolimits_0^1 \fullproduct{\tangent,\dot{\deltaX}} + \frac{\mu+\mu_c}{2\mu}\fullproduct{\deltaX,\smash{\tangent^T\tangent-\tangent\tangent^T}} \,\dt\nonumber\\
	&= \int\nolimits_{a_{k-1}}^{a_k} \fullproduct{\tangent,\dot{\deltaX}} + \frac{\mu+\mu_c}{2\mu}\fullproduct{\deltaX,\smash{\tangent^T\tangent-\tangent\tangent^T}} \,\dt\nonumber\\
	&\quad\: + \int\nolimits_{a_{k}}^{a_{k+1}} \fullproduct{\tangent,\dot{\deltaX}} + \frac{\mu+\mu_c}{2\mu}\fullproduct{\deltaX,\smash{\tangent^T\tangent-\tangent\tangent^T}} \,\dt\nonumber\\
	&= \int\nolimits_{a_{k-1}}^{a_k} -\fullproduct{\deltaX,\smash{\dot\tangent}} + \frac{\mu+\mu_c}{2\mu}\fullproduct{\deltaX,\smash{\tangent^T\tangent-\tangent\tangent^T}} \,\dt\: + \left[\fullproduct{\deltaX(t),\smash{\tangent}(t)}\right]_{t=a_{k-1}}^{a_k}\nonumber\\
	&\quad + \int\nolimits_{a_k}^{a_{k+1}} -\fullproduct{\deltaX,\smash{\dot\tangent}} + \frac{\mu+\mu_c}{2\mu}\fullproduct{\deltaX,\smash{\tangent^T\tangent-\tangent\tangent^T}} \,\dt\: + \left[\fullproduct{\deltaX(t),\smash{\tangent}(t)}\right]_{t=a_k}^{a_{k+1}}\,. \label{eq:variationalRegularityUnfinishedIntegral}
\end{align}
According to \eqref{eq:generalDifferentialEquationNoRegularity} the equality
\begin{equation}
	\dot{\tangent}(t) = \frac{\mu+\mu_c}{2\mu}(\tangent(t)^T\tangent(t)-\tangent(t)\tangent(t)^T) \nonumber \\
\end{equation}
holds on $(a_{k-1},a_k)$ as well as $(a_k,a_{k+1})$. Thus the integrals in \eqref{eq:variationalRegularityUnfinishedIntegral} are zero and we find
\begin{align}
	0 &= \quad\left[\fullproduct{\deltaX(t),\smash{\tangent}(t)}\right]_{t=a_{k-1}}^{a_k} + \left[\fullproduct{\deltaX(t),\smash{\tangent}(t)}\right]_{t=a_k}^{a_{k+1}}\nonumber\\
	&= \lim_{t\nearrow a_k} \fullproduct{\deltaX(t),\smash{\tangent}(t)} \:-\: \lim_{t\searrow a_k} \fullproduct{\deltaX(t),\smash{\tangent}(t)}\:.
\end{align}
Since $\deltaX$ can be chosen with arbitrary values for $\deltaX(a_k)$, we find
\begin{align}
	\lim_{t\nearrow a_k} \tangent(t) \:&=\: \lim_{t\searrow a_k} \tangent(t)\:.
\end{align}
Therefore $\tangent$ is continuous on $[0,1]$. But then \eqref{eq:generalDifferentialEquationNoRegularity} implies that $\dot\tangent$ is continuous as well. Thus $\Xdot = X \tangent$ is continuous on the whole interval, and therefore $X$ is continuously differentiable. But then $\Xdot = X \tangent$ is continuously differentiable as well, and thus $X\in C^2([0,1];\GLp(n))$, and $\tangent=X^{-1}\dot X$ satisfies the \emph{geodesic equation}
\begin{align}
	\dot{\tangent} = \frac{1+\frac{\mu_c}{\mu}}{2}(\tangent^T\tangent-\tangent\tangent^T)\label{eq:generalDifferentialEquation}
\end{align}
everywhere on $[0,1]$.
The results of Section \ref{subsection:generalDifferentialEquation} can be summarized as follows:
\begin{proposition}
\label{proposition:odeForPiecewiseSmoothEnergyMinimizers}
Let $A,B \in \GLp(n)$ and let $X\in\adsetAB$ be a piecewise two-times differentiable energy minimizer with regard to the left-invariant Riemannian metric induced by the inner product $\fullproduct{\cdot,\cdot}$. Then $X\in C^2([a,b];\GLp(n))$, and $\tangent=X\inv\Xdot$ is a solution to \eqref{eq:generalDifferentialEquation}.
\end{proposition}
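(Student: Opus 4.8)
I will treat this statement as the clean packaging of the computations in Section~\ref{subsection:generalDifferentialEquation}, so the plan is essentially to organize those steps in the order that makes the logic transparent. Fix the piecewise two-times differentiable energy minimizer $X$ on $[a,b]$, with breakpoints $a = a_0 < a_1 < \dots < a_{m+1} = b$ on whose closed subintervals $X$ is $C^2$, and abbreviate its logarithmic derivative by $\xi \colonequals X^{-1}\Xdot$. The crucial choice is \emph{not} to perturb $X$ additively (which would require checking that $X + h\,\delta X$ stays in $\GLp(n)$ and would produce a more awkward identity) but multiplicatively: for $\delta X \in C^1_0([a,b];\gln)$ put $Y_h \colonequals X(\id + h\,\delta X)$. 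For $|h|$ small this lies in $\GLp(n)$ automatically, and a direct expansion of $\dot Y_h$ via the product rule and Lemma~\ref{lemma:invDerivative} gives $Y_h^{-1}\dot Y_h = \xi + h\,(\xi\,\delta X - \delta X\,\xi + \dot{\delta X}) + O(h^2)$, so that $\frac{d}{dh}\big|_{h=0}(Y_h^{-1}\dot Y_h) = [\xi,\delta X] + \dot{\delta X}$, with $[\,\cdot\,,\cdot\,]$ the commutator.

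First I would write the stationarity condition as $0 = \tfrac12\,\frac{d}{dh}\big|_{h=0}\ener(Y_h) = \int_a^b \fullproduct{\xi,\; [\xi,\delta X] + \dot{\delta X}}\,\dt$. The commutator term is reshaped using Lemma~\ref{lemma:productOfCommutators}, which trades the isotropic product for $\tfrac{\mu+\mu_c}{2}$ times the Euclidean one, and then cyclicity of the trace moves the commutator onto $\delta X$, producing $\tfrac{\mu+\mu_c}{2}\,\eucproduct{\delta X,\; \xi^T\xi - \xi\xi^T}$. The matrix $\xi^T\xi - \xi\xi^T$ is symmetric and trace-free (because $\tr(\xi^T\xi) = \tr(\xi\xi^T)$), so pairing it with $\delta X$ only detects $\dev\sym\delta X$, which lets one re-express $\tfrac{\mu+\mu_c}{2}\,\eucproduct{\delta X, \xi^T\xi - \xi\xi^T} = \tfrac{\mu+\mu_c}{2\mu}\,\fullproduct{\delta X, \xi^T\xi - \xi\xi^T}$. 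Splitting $\int_a^b = \sum_k\int_{a_k}^{a_{k+1}}$, integrating $\fullproduct{\xi,\dot{\delta X}}$ by parts on each piece, and using $\delta X(a_k)=0$ to kill the boundary terms, the stationarity condition becomes $\sum_k\int_{a_k}^{a_{k+1}} \fullproduct{\delta X,\; -\dot\xi + \tfrac{\mu+\mu_c}{2\mu}(\xi^T\xi - \xi\xi^T)}\,\dt = 0$, whence the fundamental lemma of the calculus of variations yields the geodesic equation \eqref{eq:generalDifferentialEquation} on each open subinterval, i.e.\ \eqref{eq:generalDifferentialEquationNoRegularity}.

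The substantive remaining step is the regularity bootstrap across the breakpoints. The plan is to rerun the computation with test functions $\delta X$ supported in $(a_{k-1},a_{k+1})$ but \emph{not} forced to vanish at $a_k$, integrating by parts separately over $(a_{k-1},a_k)$ and $(a_k,a_{k+1})$. The interior integrals now vanish because \eqref{eq:generalDifferentialEquationNoRegularity} already holds there, so only the boundary contributions survive, and their sum collapses to $\lim_{t\nearrow a_k}\fullproduct{\delta X(t),\xi(t)} - \lim_{t\searrow a_k}\fullproduct{\delta X(t),\xi(t)} = 0$. Since $\delta X(a_k)$ can be prescribed arbitrarily, the one-sided limits of $\xi$ at $a_k$ must agree, so $\xi$ is continuous on all of $[a,b]$; then \eqref{eq:generalDifferentialEquation} shows $\dot\xi$ is continuous, hence $\Xdot = X\xi$ is $C^1$, hence $X\in C^2([a,b];\GLp(n))$, and the geodesic equation holds everywhere. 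I expect the genuine obstacle to sit precisely here: not in any single estimate, but in organizing the family of variations so that ``arbitrary value at $a_k$, compact support near $a_k$, vanishing at the remaining breakpoints'' is simultaneously achievable, and in keeping the signs straight in the boundary terms. The purely algebraic reduction via Lemma~\ref{lemma:productOfCommutators} together with the symmetry and trace-freeness of $\xi^T\xi - \xi\xi^T$ is the other spot requiring care, but it is routine once the relevant trace identities are isolated.
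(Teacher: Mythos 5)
Your proposal is correct and follows essentially the same route as the paper: the multiplicative variation $Y_h = X(\id + h\,\delta X)$, the reduction $\fullproduct{\xi,[\xi,\delta X]} \to \tfrac{\mu+\mu_c}{2\mu}\fullproduct{\delta X,\xi^T\xi-\xi\xi^T}$ via Lemma~\ref{lemma:productOfCommutators} and the symmetry and trace-freeness of $\xi^T\xi-\xi\xi^T$, piecewise integration by parts plus the fundamental lemma, and the regularity bootstrap at the breakpoints $a_k$ by testing with $\delta X$ not vanishing at $a_k$. The paper's Proposition~\ref{proposition:odeForPiecewiseSmoothEnergyMinimizers} is simply a summary of the computation carried out in the body of Section~\ref{subsection:generalDifferentialEquation}, which is exactly what you have reorganized.
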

\begin{remark}
\label{remark:lengthMinimizersAreGeodesics}
	In particular, every length minimizing curve can be reparameterized to an energy minimizer, according to Lemmas \ref{lemma:constantSpeed} and \ref{lemma:energyLengthMinimizerRelation}. Thus every \emph{length minimizer} has a reparameterization that satisfies the geodesic equation.
\end{remark}
\begin{remark}
	Equation \eqref{eq:generalDifferentialEquation} could also be deduced from a more general formula given by Bloch et al.\ \cite{bloch2011}, which is derived via the Euler-Poincar\'e equations corresponding to the length minimization problem \cite[p. 430ff.]{marsden1999}.
\end{remark}

\subsection{Properties of solutions $\tangent$}
Some properties of the solutions $\tangent\in C^1([0,1];\gln)$ to \eqref{eq:generalDifferentialEquation} can be inferred directly from the equation.
\begin{lemma}
\label{lemma:geodesicProperties}
	Let $\tangent$ be a solution to \eqref{eq:generalDifferentialEquation}. Then:
	\begin{alignat}{2}
		&i) &\fullnorm{\smash{\tangent}} &\equiv \mathrm{constant}\,,\label{eq:odeNormEstimate}\\
		&ii) & \tr (\tangent) &\equiv \mathrm{constant}\,,\label{eq:odeTraceEstimate}\\
		&iii)\: & \det (\tangent) &\equiv \mathrm{constant}\,,\label{eq:odeDetEstimate}\\
		&iv)\quad & \tr (\Cof \tangent) &\equiv \mathrm{constant}\,,\label{eq:odeTrCofEstimate}
	\end{alignat}
	where $\Cof \tangent$ denotes the \emph{cofactor} of $\tangent$.
\end{lemma}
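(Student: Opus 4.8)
The plan is to show that the $t$-derivative of each of the four quantities vanishes identically; constancy then follows at once. Throughout set $\alpha \colonequals \frac12\big(1+\frac{\mu_c}{\mu}\big) > 0$, so that \eqref{eq:generalDifferentialEquation} reads $\dot\tangent = \alpha\,(\tangent^T\tangent - \tangent\tangent^T)$. The two elementary observations driving everything are that $\tangent^T\tangent - \tangent\tangent^T$ is symmetric, so $\dot\tangent \in \Symn$, and that $\tr(\tangent^T\tangent - \tangent\tangent^T) = 0$ by cyclicity of the trace, so $\tr\dot\tangent = 0$. Then ii) is immediate: $\ddt\tr\tangent = \tr\dot\tangent = 0$.

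For i), I would differentiate $\fullnorm{\tangent}^2 = \fullproduct{\tangent,\tangent}$, giving $\ddt\fullnorm{\tangent}^2 = 2\,\fullproduct{\tangent,\dot\tangent}$, and show this vanishes. Since $\tangent - \tangent^T$ is skew symmetric and $\dot\tangent$ is symmetric, the orthogonality $\fullproduct{S,W}=0$ for $S\in\Symn$, $W\in\son$ gives $\fullproduct{\tangent,\dot\tangent} = \fullproduct{\tangent^T,\dot\tangent} = \alpha\,\fullproduct{\tangent^T,[\tangent^T,\tangent]}$. Lemma~\ref{lemma:productOfCommutators}, applied with $A = \tangent^T$ and $B = \tangent$, rewrites this as $\alpha\,\frac{\mu+\mu_c}{2}\,\eucproduct{\tangent^T,[\tangent^T,\tangent]} = \alpha\,\frac{\mu+\mu_c}{2}\,\big(\tr(\tangent\tangent^T\tangent) - \tr(\tangent^2\tangent^T)\big)$, and the two traces coincide by cyclicity, so $\ddt\fullnorm{\tangent}^2 = 0$.

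For iii), I would invoke Jacobi's formula $\ddt\det\tangent = \tr\big(\Cof(\tangent)^T\dot\tangent\big)$, which holds without any invertibility assumption and carries the identities $\Cof(\tangent)^T\tangent = \tangent\,\Cof(\tangent)^T = \det(\tangent)\,\id$. Substituting the geodesic equation, $\ddt\det\tangent = \alpha\big[\tr(\Cof(\tangent)^T\tangent^T\tangent) - \tr(\Cof(\tangent)^T\tangent\tangent^T)\big]$; the second trace equals $\tr(\det(\tangent)\,\tangent^T) = \det(\tangent)\tr\tangent$, and the first, after a cyclic shift to $\tr\big(\tangent\,\Cof(\tangent)^T\tangent^T\big)$, equals $\det(\tangent)\tr\tangent$ as well, so the bracket is zero. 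For iv) the key observation is that for every $s\in\R$ the shifted curve $\tangent + s\,\id$ again solves \eqref{eq:generalDifferentialEquation}: in $(\tangent+s\,\id)^T(\tangent+s\,\id) - (\tangent+s\,\id)(\tangent+s\,\id)^T$ the $s$-linear cross terms and the $s^2\,\id$ terms cancel, leaving exactly $\tangent^T\tangent - \tangent\tangent^T$. Hence, applying iii) to $\tangent + s\,\id$, the map $t\mapsto\det\big(s\,\id + \tangent(t)\big)$ is constant for every fixed $s$; writing $\det(s\,\id + \tangent) = s^n + (\tr\tangent)\,s^{n-1} + \dots + (\tr\Cof\tangent)\,s + \det\tangent$, every coefficient of this polynomial in $s$ is $t$-independent, and the coefficient of $s^1$ is precisely $\tr(\Cof\tangent)$, which proves iv) (and re-derives ii) and iii)).

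The computations are routine; the point that needs care is iv), namely recognizing $\tr(\Cof\tangent)$ as the $s^1$-coefficient of $\det(s\,\id + \tangent)$ — the $(n-1)$-st elementary symmetric function of the eigenvalues, equivalently the sum of the principal $(n-1)\times(n-1)$ minors of $\tangent$ — and then using that the shift $\tangent\mapsto\tangent+s\,\id$ preserves the geodesic equation so that iii) applies. As an alternative to this bookkeeping, ii)--iv) follow simultaneously from the observation that \eqref{eq:generalDifferentialEquation} is an isospectral flow: if $P$ solves the linear ODE $\dot P = \alpha\,\tangent^T P$ with $P(0)=\id$, then $P$ stays invertible and $\ddt\big(P^{-1}\tangent P\big) = P^{-1}\big(\dot\tangent - \alpha(\tangent^T\tangent - \tangent\tangent^T)\big)P = 0$, so $\tangent(t) = P(t)\,\tangent(0)\,P(t)^{-1}$ is conjugate to $\tangent(0)$ and therefore has a $t$-independent characteristic polynomial, among whose coefficients are $\tr\tangent$, $\tr\Cof\tangent$ and $\det\tangent$.
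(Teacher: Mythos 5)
Your proof is correct, and all four parts are verified soundly. Parts ii) and iii) essentially match the paper's argument; part i) reaches the same conclusion but routes through Lemma~\ref{lemma:productOfCommutators} (writing $\fullproduct{\tangent,\dot\tangent}=\fullproduct{\tangent^T,\dot\tangent}=\alpha\fullproduct{\tangent^T,[\tangent^T,\tangent]}$ and collapsing to a Euclidean trace identity), whereas the paper re-derives the vanishing of the $\mu_c$- and $\kappa$-terms from scratch; both are fine and of comparable length. The genuinely different and, I think, nicer contribution is iv). The paper assumes $\tangent$ invertible, shows $\ddt\tr(\tangent^{-1})=0$ by direct computation, uses $\tr\Cof\tangent=\det(\tangent)\tr(\tangent^{-1})$, and then disposes of the non-invertible case by appealing to density of $\GLn$ in $\gln$ together with continuous dependence on the data. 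Your argument avoids that entirely: the observation that the geodesic equation is invariant under the shift $\tangent\mapsto\tangent+s\id$ (since the cross-terms cancel in $(\tangent+s\id)^T(\tangent+s\id)-(\tangent+s\id)(\tangent+s\id)^T$) lets you apply iii) to the shifted curve for every fixed $s$, and reading off $\tr\Cof\tangent$ as the $s^1$-coefficient of the polynomial $\det(s\id+\tangent)$ gives constancy with no invertibility hypothesis at all, and simultaneously re-derives ii) and iii). Your closing isospectral-flow remark ($\tangent(t)=P(t)\tangent(0)P(t)^{-1}$ with $\dot P=\alpha\tangent^T P$, $P(0)=\id$, $P$ staying invertible because $\det P$ obeys a scalar linear ODE) is an even more economical unified proof of ii)--iv), and also explains conceptually why these conserved quantities exist: the flow preserves the full characteristic polynomial of $\tangent$.
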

\begin{proof}
	i)
	\begin{align}
		\ddt \fullnorm{\smash\tangent}^2 &= \ddt\fullproduct{\tangent, \tangent} = 2 \fullproduct{\tangent, \smash{\dot\tangent}} = (1+\tfrac{\mu_c}{\mu}) \fullproduct{\tangent,\tangent^T\tangent - \tangent\tangent^T}\nonumber\\
		&= (1+\tfrac{\mu_c}{\mu}) \Big(\mu\fullproduct{\sym\tangent,\tangent^T\tangent - \tangent\tangent^T} + \mu_c\fullproduct{\skew\tangent,\smash{\underbrace{\tangent^T\tangent - \tangent\tangent^T}_{\in\Sym(n)}}}\nonumber\\
		&\qquad+\frac{\kappa}{n}\tr\tangent\underbrace{\tr(\tangent^T\tangent - \tangent\tangent^T)}_{=0}\Big)\nonumber\\
		&= (\mu+\mu_c)\eucproduct{\half(\tangent+\tangent^T),\tangent^T\tangent - \tangent\tangent^T}\nonumber\\
		&= \frac{\mu+\mu_c}{2}\tr(\tangent^T\tangent\tangent - \tangent\tangent^T\tangent + \tangent^T\tangent^T\tangent - \tangent^T\tangent\tangent^T)\nonumber\\
		&= \frac{\mu+\mu_c}{2}\tr(\tangent^T\tangent\tangent) - \tr(\tangent\tangent^T\tangent) + \tr(\tangent^T\tangent^T\tangent) - \tr(\tangent^T\tangent\tangent^T) \;=\; 0\,.
	\end{align}
	ii)
	\begin{align}
		\ddt \tr\tangent &= \tr\dot\tangent = \frac{1+\frac{\mu_c}{\mu}}{2}\tr(\tangent^T\tangent - \tangent\tangent^T) = 0\,.\nnl
	\end{align}
	iii)
	\begin{align}
		\ddt \det\tangent &= \tr(\Adj(\tangent)\dot\tangent)\label{eq:localUsedDetDerivative} = \tfrac{1+\frac{\mu_c}{\mu}}{2}\tr(\Adj(\tangent)(\tangent^T\tangent - \tangent\tangent^T))\nonumber\\
		&= \tfrac{1+\frac{\mu_c}{\mu}}{2}\tr(\Adj(\tangent)\tangent^T\tangent - \Adj(\tangent)\tangent\tangent^T)\nonumber\\
		&= \tfrac{1+\frac{\mu_c}{\mu}}{2}\tr(\tangent^T\tangent\Adj(\tangent) - \det(\tangent)\tangent^T) = \tfrac{1+\frac{\mu_c}{\mu}}{2} \tr(\det(\tangent)\tangent^T - \det(\tangent)\tangent^T) \;=\; 0\,,
	\end{align}
	where $\Adj(A) = (\Cof A)^T$ is the adjugate matrix of $A$ with $\Adj(A)\cdot A = A\cdot \Adj(A) = (\det A) \, \id$. For the derivative of the determinant function, see e.g.\ \cite{NeffCISMnotes}.\\[.5em]
	iv) We first assume that $\tangent$ is invertible. Then
	\begin{align}
		\ddt \tr\tangent\inv &= -\tr(\tangent\inv \dot\tangent \tangent\inv) \nnl
		&= -\tfrac{1+\frac{\mu_c}{\mu}}{2}\tr(\tangent\inv (\tangent^T\tangent - \tangent\tangent^T) \tangent\inv) = -\tfrac{1+\frac{\mu_c}{\mu}}{2}\tr(\tangent\inv \tangent^T - \tangent^T\tangent\inv) = 0\,,
	\end{align}
	and since $\det(\tangent)$ is constant as well, we find
	\[
		\tr\Cof(\tangent) = \det(\tangent)\cdot \tr \tangent^{-T} = \det(\tangent)\cdot \tr (\tangent^{-1}) \equiv \text{constant}\,.
	\]
	The general case follows directly from the density of $\GLn$ in $\gln=\Rnn$.
\end{proof}

\subsection{Existence and uniqueness of solutions}
Since the mapping $\tangent\mapsto \frac{1+\frac{\mu_c}{\mu}}{2} (\tangent^T\tangent - \tangent\tangent^T)$ is locally Lipschitz continuous, the Picard–Lindelöf theorem implies that equation \eqref{eq:generalDifferentialEquation} has a unique local solution for given $\tangent(0)$ and that if a global solution exists, it is uniquely defined. Similarly, for given $\tangent\in C^1(I,\gln)$ on an arbitrary interval $I$, the initial value problem
\begin{align}
	\left\{\begin{aligned}
		\dot X &= X\tangent\,,\\
		X(0) &= X_0
	\end{aligned}\right.
\end{align}
has a unique local solution as well. Using estimates \eqref{eq:odeNormEstimate} to \eqref{eq:odeDetEstimate}, we can even show that this solution is globally defined:
\begin{proposition}
	\label{prop:generalIVP}
	For $X_0\in\GLp(n)$, $M_0\in\gln$, the initial value problem
	\begin{align}
		\left\{\begin{aligned}
			\tangent &= X^{-1}\dot X\,,\\
			\dot \tangent &= \frac{1+\frac{\mu_c}{\mu}}{2}(\tangent^T\tangent - \tangent\tangent^T)\,,\\
			X&(0) = X_0\,,\quad \dot X(0) = M_0
		\end{aligned}\right.\label{eq:generalIVP}
	\end{align}
	for the geodesic equation has a unique global solution $X\colon\R\to\GLp(n)$.
\end{proposition}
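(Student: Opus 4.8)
The plan is to build the global solution in two stages — first the tangent curve $\tangent$, then $X$ itself — and to obtain globality from the conservation laws of Lemma~\ref{lemma:geodesicProperties} rather than from any a priori growth estimate for the (merely quadratic) right-hand side of \eqref{eq:generalDifferentialEquation}.

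\emph{Step 1 (the curve $\tangent$).} The initial data of \eqref{eq:generalIVP} fix $\tangent(0) = X_0^{-1}M_0 \in \gln$. By Picard--Lindel\"of, the equation $\dot\tangent = \frac{1+\mu_c/\mu}{2}(\tangent^T\tangent - \tangent\tangent^T)$ has a unique solution $\tangent \in C^1((T_-,T_+);\gln)$ on a maximal interval $(T_-,T_+) \ni 0$. The computation of Lemma~\ref{lemma:geodesicProperties}(i) applies verbatim on $(T_-,T_+)$ and yields $\fullnorm{\tangent(t)} \equiv \fullnorm{\tangent(0)}$; since $\mu,\mu_c,\kappa > 0$, the norm $\fullnorm{\cdot}$ is equivalent to the Frobenius norm $\norm{\cdot}$ on $\gln$, so $\norm{\tangent(t)} \le c$ with $c$ independent of $t$. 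By the standard continuation theorem a maximal solution with a finite endpoint must leave every compact subset of $\gln$; the uniform bound rules this out, hence $(T_-,T_+) = \R$ and $\tangent \colon \R \to \gln$ is globally defined and unique.

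\emph{Step 2 (the curve $X$).} With $\tangent \colon \R \to \gln$ now a fixed continuous function, $\Xdot = X\tangent$, $X(0) = X_0$ is a \emph{linear} matrix ODE with continuous coefficients, hence has a unique solution $X \colon \R \to \Rnn$ on all of $\R$. It remains to verify $X(t) \in \GLp(n)$, i.e.\ $\det X(t) > 0$. By Jacobi's formula for the derivative of the determinant (see e.g.\ \cite{NeffCISMnotes}) together with the constancy of $\tr\tangent$ from \eqref{eq:odeTraceEstimate},
\begin{align*}
	\ddt \det X(t) = \tr\big(\Adj(X(t))\,\Xdot(t)\big) = \det(X(t))\,\tr\tangent(t) = \det(X(t))\,\tr\tangent(0)\,,
\end{align*}
so $\det X(t) = (\det X_0)\,e^{\,t\,\tr\tangent(0)} > 0$ for every $t \in \R$, using $\det X_0 > 0$. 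Thus $X(t) \in \GLp(n)$ for all $t$, and $X(0) = X_0$, $\Xdot(0) = X_0\tangent(0) = M_0$, with $\tangent = X^{-1}\Xdot$ solving \eqref{eq:generalDifferentialEquation}; so $X$ is a global solution of \eqref{eq:generalIVP}. Uniqueness is inherited from the two steps: any solution of \eqref{eq:generalIVP} produces, via $\tangent = X^{-1}\Xdot$, the unique Picard--Lindel\"of solution of the $\tangent$-equation with $\tangent(0) = X_0^{-1}M_0$, and for that fixed $\tangent$ the linear problem $\Xdot = X\tangent$, $X(0) = X_0$, has a unique solution.

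The only non-routine point is the passage from local to global existence in Step~1: because the right-hand side of \eqref{eq:generalDifferentialEquation} is quadratic, finite-time blow-up is a priori conceivable, and it is precisely the conserved quantity $\fullnorm{\tangent}$ — estimate \eqref{eq:odeNormEstimate} — that forecloses it. Everything else is linear ODE theory together with the one-line computation above.
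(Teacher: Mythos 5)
Your proof is correct and follows the paper's overall strategy: global existence of $\tangent$ from the conservation law $\fullnorm{\smash\tangent}\equiv\mathrm{const.}$ (Lemma~\ref{lemma:geodesicProperties}~i), global existence of $X$ as the solution of the \emph{linear} ODE $\Xdot = X\tangent$ with bounded coefficient $\tangent$, and positivity of $\det X$ from the scalar ODE $\ddt\det X = (\tr\tangent(0))\,\det X$ using Lemma~\ref{lemma:geodesicProperties}~ii. Your determinant step is a small but genuine simplification of the paper's: because you substitute $\Xdot = X\tangent$ and then invoke the \emph{universal} polynomial identity $\Adj(X)\,X = (\det X)\,\id$, the ODE $\ddt\det X = \det(X)\,\tr\tangent(0)$ holds on all of $\R$ without any invertibility hypothesis, and the explicit exponential formula $\det X(t) = (\det X_0)\,e^{\,t\,\tr\tangent(0)} > 0$ finishes at once; the paper instead derives the same ODE only on the open set $\{\det X > 0\}$ (via $\Adj(X) = (\det X)\,X^{-1}$) and must then close with a short contradiction argument, using continuity of $\det X$ on a maximal subinterval, to rule out escape from $\GLp(n)$. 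Everything else, including the two-step uniqueness argument, matches the paper.
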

\begin{proof}
	Since $\tangent(0)=X(0)^{-1}\dot X(0)$ is determined by $X(0)$ and $\dot X(0)$, \eqref{eq:generalIVP} has a unique local solution, as demonstrated above. Then, according to \eqref{eq:odeNormEstimate}, $\fullnorm{\smash\tangent}$ is constant, hence $\tangent$ is bounded in $\Rnn$ and therefore defined on all of $\R$. The same estimate shows that the mapping $X\mapsto\tangent X$ is globally Lipschitz continuous. Therefore (after rewriting \eqref{eq:generalIVP}$_1$ as $\Xdot=UX$) there exists a unique global solution $X\colon\R\to\Rnn$ to \eqref{eq:generalIVP}.\\
	It remains to show that $X(t)\in\GLp(n)$ for all $t\in\R$. First we observe that, for $\det X>0$,
	\begin{align}
		\ddt(\det X)&=\tr(\Adj(X)\dot X) = \tr(\det(X)X^{-1}\dot X) = \det(X) \tr\tangent \overset{\eqref{eq:odeTraceEstimate}}{=} \det (X) \tr(\tangent(0))\,.
	\end{align}
	Thus on every interval $I\subseteq\R$ with $0\in I$ and $X(I)\subseteq\GLp(n)$, the function $t\mapsto\det X(t)$ solves the initial value problem
	\begin{align}
		\left\{\begin{aligned}
			\ddt(\det X(t)) &= \tr(\tangent(0))\det X(t)\,,\\
			\det X(0) &= \det X_0\,.
		\end{aligned}\right.\label{eq:detODE}
	\end{align}
	Assume now that $X(\R)\nsubseteq\GLp(n)$. Since $I^+\colonequals\{t\in\R \setvert \det X(t)>0\}$ is open and $0\in I^+$ by definition of $X_0$, there is a minimal $t_0>0$ with $-t_0\notin I^+$ or $t_0\notin I^+$. But then $\det X$ is the (unique) solution to \eqref{eq:detODE}, i.e.
	\begin{align}
		\det X(t) &= (\det X_0)\,e^{t\cdot \tr(\tangent(0))}
	\end{align}
	on $(-t_0, t_0)$, and thus
	\[
		0<\,\lim_{\mathclap{t\searrow-t_0}} \det X(t) = \det X(-t_0)\,, \qquad 0<\,\lim_{\mathclap{t\nearrow t_0}} \det X(t) = \det X(t_0)\,,
	\]
	in contradiction to the definition of $t_0$.
\end{proof}

\section{The parameterization of geodesics}
\label{section:parameterization}
\subsection{Solving the geodesic equation}
We will now give an explicit solution to the geodesic initial value problem \eqref{eq:generalIVP}. The result is inspired by the work of Mielke \cite{Mielke2002}, who obtained formula \eqref{eq:definitionGeodesic} as a parametrization of the geodesics on $\SL(n)$. For the canonical inner product, i.e.\ for the special case $\mu=\muc=\kappa=1$, the result can also be found in \cite{Andruchow2011}. Note that while geodesics on a Lie group equipped with a bi-invariant Riemannian metric are translated one-parameter groups which can easily be characterized through the matrix exponential \cite{Taubes2011} (which, for example, allows for an easy explicit computation of the geodesic distance on the special orthogonal group $\SOn$ with respect to the canonical metric \cite{Moakher2002}), the metric discussed here is not bi-$\GLn$-invariant and thus the solution to the geodesic equation takes on a more complicated form.

Let $A\in\GLp(n)$, $M\in\gln$ and $\omega = \frac{\mu_c}{\mu}$. We define
\begin{align}
	\vPhi(M) \ratio&= \exp(\sym M - \omega \skew M)\exp((1+\omega)\skew M)\,,\label{eq:definitionPhi}\\
	\vPhiA(M) \ratio&= A\cdot\vPhi(M) = A\exp(\sym M - \omega \skew M)\exp((1+\omega)\skew M)\,,\label{eq:definitionPhiA}\\
	X(t) \ratio&= \vPhiA(tM) = A\exp(t(\sym M - \omega \skew M))\exp(t(1+\omega)\skew M)\,,\label{eq:definitionGeodesic}
\end{align}
where $\exp\colon \gln\to\GLpn$ denotes the matrix exponential (cf.\ Appendix \ref{section:expAppendix}). We use equation \eqref{eq:expSimpleDerivative} to compute
\begin{align}
	\dot X &= A\exp(t(\sym M - \omega \skew M))\:(\sym M - \omega \skew M)\:\exp(t(1+\omega)\skew M)\nonumber\\
	&\quad+ A\exp(t(\sym M - \omega \skew M))\:(1+\omega)\skew M\:\exp(t(1+\omega)\skew M)\nonumber\\
	&= A\exp(t(\sym M - \omega \skew M))\:(\sym M + \skew M)\:\exp(t(1+\omega)\skew M)\nnl
	&= A\exp(t(\sym M - \omega \skew M))M\exp(t(1+\omega)\skew M)\label{eq:derivativeGeodesic}
\end{align}
and
\begin{align}
	\tangent \colonequals X^{-1}\dot X &= \exp(t(1+\omega)\skew M)^{-1}\exp(t(\sym M - \omega \skew M))^{-1}A^{-1}\nonumber\\
	&\quad\cdot A\exp(t(\sym M - \omega \skew M))\:M\:\exp(t(1+\omega)\skew M)\nonumber\\
	&= \exp(t(1+\omega)\skew M)^{-1} \:M\: \underbrace{\exp(\overbrace{t(1+\omega)\skew M}^{\in\so(n)})}_{\equalscolon Q\in\SO(n)} \;=\; Q^T M Q\,, \label{eq:geodesicTangent}
\end{align}
where we used \eqref{eq:expsoSO} stated in the appendix to infer $Q\in\SOn$. Then
\begin{align}
	\tangent^T\tangent - \tangent\tangent^T &= Q^T M^TM Q - Q^T MM^T Q = Q^T(M^TM - MM^T)Q
\end{align}
and
\begin{align}
	\dot\tangent = \dot Q^T M Q + Q^T M \dot Q \quad&\overset{\mathclap{\eqref{eq:expSimpleDerivative}}}{=}\quad Q^T (1+\omega)(\skew M)^T M Q + Q^T (1+\omega)M(\skew M) Q\nonumber\\
	&= (1+\omega)Q^T( (\skew M)^T M + M (\skew M) )Q\nonumber\\
	&= (1+\omega)Q^T( M (\skew M) - (\skew M) M)Q\nonumber\\
	&= (1+\omega)Q^T(\frac{1}{2}(MM - MM^T) - \frac{1}{2}(MM - M^TM))Q\nonumber\\
	&= \frac{(1+\omega)}{2}Q^T(M^TM - MM^T)Q \;=\; \frac{(1+\omega)}{2}(\tangent^T\tangent - \tangent\tangent^T)\,,
\end{align}
and thus $X$ solves the geodesic equation. To solve the initial value problem \eqref{eq:generalIVP} we use the equality $\exp(0)=\id$ as well as equation \eqref{eq:derivativeGeodesic} to obtain
\begin{align}
	X(0)=A\,, \quad \dot X(0) &= AM\,,\label{eq:derivativeGeodesicZero}
\end{align}
hence we can solve \eqref{eq:generalIVP} by choosing $A=X_0$, $M=X_0\inv M_0$. We conclude:
\begin{theorem}
\label{theorem:parameterizationOfGeodesics}
	Let $X_0\in\GLpn,\, M_0\in\gln$. Then the curve $X\colon\R \to \GLpn$ with
	\begin{align}
		X(t) &= \vPhi_{X_0}(t\,X_0^{-1}M_0) = X_0\exp(t(\sym (X_0^{-1}M_0) - \tfrac{\mu_c}{\mu} (X_0^{-1}M_0)))\exp(t(1+\tfrac{\mu_c}{\mu})\skew (X_0^{-1}M_0))\nonumber
	\end{align}
	is the unique global solution to the geodesic initial value problem
	\[\pushQED{\qed}
		\left\{\begin{aligned}
			\tangent &= X^{-1}\dot X\,,\\
			\dot \tangent &= \frac{1+\frac{\mu_c}{\mu}}{2}(\tangent^T\tangent - \tangent\tangent^T)\,,\\
			X(0) &= X_0\,,\quad \dot X(0) = M_0\,.\mathrlap{\hspace{49mm}\qedhere}
		\end{aligned}\right.
	\]\popQED
\end{theorem}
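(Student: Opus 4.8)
The plan is to verify by direct differentiation that the curve $X(t) = \vPhi_{X_0}(t\,X_0^{-1}M_0)$ from \eqref{eq:definitionGeodesic} solves the geodesic initial value problem \eqref{eq:generalIVP}, and then to obtain uniqueness — as well as the fact that $X$ is defined on all of $\R$ with values in $\GLpn$ — from Proposition \ref{prop:generalIVP}. Writing $A = X_0$, $M = X_0^{-1}M_0$ and $\omega = \mu_c/\mu$, so that $X(t) = A\exp(t(\sym M - \omega\skew M))\exp(t(1+\omega)\skew M)$, I first note that this is manifestly a smooth curve $\R\to\GLpn$, since the matrix exponential always has positive determinant and $A\in\GLpn$.

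First I would differentiate $X$ using the identity $\ddt\exp(tB) = B\exp(tB) = \exp(tB)B$ (equation \eqref{eq:expSimpleDerivative}) together with the product rule. Because each of the two exponents commutes with itself, the two boundary contributions combine to insert the single matrix $(\sym M - \omega\skew M) + (1+\omega)\skew M = \sym M + \skew M = M$ between the two exponential factors, which is exactly \eqref{eq:derivativeGeodesic}. Evaluating at $t = 0$ and using $\exp(0) = \id$ then gives $X(0) = A = X_0$ and $\dot X(0) = AM = M_0$, so the initial conditions are satisfied.

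Next I would compute $\tangent := X^{-1}\dot X$: the leading factor $A$ and the first exponential cancel, leaving $\tangent = Q^{-1}MQ$ with $Q := \exp(t(1+\omega)\skew M)$. The point to exploit here is that $t(1+\omega)\skew M \in \son$, hence $Q\in\SOn$ by \eqref{eq:expsoSO}, so that $Q^{-1} = Q^T$ and $\tangent = Q^TMQ$; consequently $\tangent^T\tangent - \tangent\tangent^T = Q^T(M^TM - MM^T)Q$, while $\dot Q = (1+\omega)(\skew M)Q$. The one step requiring a little care is the geodesic equation itself: differentiating $\tangent = Q^TMQ$ yields
\[
	\dot\tangent = (1+\omega)\,Q^T\bigl((\skew M)^TM + M(\skew M)\bigr)Q = (1+\omega)\,Q^T\bigl(M(\skew M) - (\skew M)M\bigr)Q,
\]
and one must then recognize the identity $M(\skew M) - (\skew M)M = \tfrac12(M^TM - MM^T)$, which drops out upon substituting $\skew M = \tfrac12(M - M^T)$ and cancelling the $MM$ terms. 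This produces exactly $\dot\tangent = \tfrac{1+\omega}{2}(\tangent^T\tangent - \tangent\tangent^T)$, i.e.\ \eqref{eq:generalDifferentialEquation}.

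Having checked that $X$ satisfies all of \eqref{eq:generalIVP}, I would conclude by invoking Proposition \ref{prop:generalIVP}, which asserts that this initial value problem has a \emph{unique} global solution $\R\to\GLpn$; the explicitly constructed curve is therefore that solution. I expect the only genuine obstacle to be the matrix identity in the computation of $\dot\tangent$ — everything else is bookkeeping with the product rule and the fact that $\exp$ maps $\son$ into $\SOn$.
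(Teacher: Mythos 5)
Your proposal is correct and follows essentially the same route as the paper's own verification: differentiate $X$ via \eqref{eq:expSimpleDerivative} and the product rule to get \eqref{eq:derivativeGeodesic}, deduce $\tangent = Q^T M Q$ with $Q\in\SOn$, differentiate $\tangent$ and use the identity $M(\skew M) - (\skew M)M = \tfrac12(M^TM - MM^T)$ to recover the geodesic equation, check the initial conditions at $t=0$, and invoke Proposition \ref{prop:generalIVP} for global uniqueness. The argument is complete and the key algebraic step you flag is indeed the same one the paper carries out.
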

\noindent Because of the left-invariance of $g$, we can mostly focus on the case $A=\id$, $\vPhiA = \vPhi_\id = \vPhi$ and
\[
	X(t) = \exp(t(\sym M - \omega \skew M))\exp(t(1+\omega)\skew M)\,.
\]

\subsection{Properties of geodesic curves}
Since the geodesic curves
\begin{equation}
	X(t) =  A\exp(t(\sym M - \omega \skew M))\exp(t(1+\omega)\skew M)
\end{equation}
solve the geodesic initial value problem \eqref{eq:generalIVP}, the properties given in Lemma \ref{lemma:geodesicProperties} can be directly applied to $\tangent = X\inv \Xdot$. Furthermore, we can compute the length of $X$ on the interval $[0,t_0]$ for given $M\in\gln$ (recall from \eqref{eq:geodesicTangent} that $X\inv\Xdot = \tangent = Q^TMQ$ with $Q(t)\in\SOn$):
\begin{align}
	\len(X) = \int_0^{t_0}g_X(\dot X, \dot X)\,\dt &= \int_0^{t_0} \sqrt{\fullproduct{X^{-1}\dot X, X^{-1}\dot X}}\,\dt\nonumber\\
	&= \int_0^{t_0} \sqrt{\fullproduct{\tangent, \tangent}}\,\dt = \int_0^{t_0} \sqrt{\fullproduct{Q^T M Q, Q^T M Q}}\,\dt\nonumber\\
	&= \int_0^{t_0} \sqrt{\fullproduct{M, M}}\dt \;=\; t_0 \fullnorm{M}\,,\label{eq:geodesicLength}
\end{align}
where the second to last equality follows from the isotropy property \eqref{eq:isotropicProperty} of the inner product $\fullproduct{\cdot,\cdot}\,$.

\section{The existence of length minimizers}
\label{section:existenceOfMinimizers}

As we have seen in Section \ref{section:calculusOfVariations} (Remark \ref{remark:lengthMinimizersAreGeodesics}), every sufficiently smooth length minimizing curve can be reparameterized to solve the geodesic equation. Thus Theorem \ref{theorem:parameterizationOfGeodesics} shows that every length minimizer, after possible reparameterization, is of the form \eqref{eq:definitionGeodesic}.

At this point, it is not clear that, for given $A,B\in\GLp(n)$, such a minimizer connecting $A$ and $B$ actually exists. In the broader setting of differential geometry it can be shown that the \emph{local} existence of minimizers is guaranteed on any (differentiable) Riemannian manifold \cite[Corollary 1.4.2]{Jost1998}. Furthermore, the global existence of the geodesic curves demonstrated in Proposition \ref{prop:generalIVP} implies that $\GLn$ is \emph{geodesically complete} with respect to $g$, and thus the \emph{Hopf-Rinow theorem} also guarantees the \emph{global} existence of length minimizers. Nonetheless, in our effort to keep this paper self contained, we will give a full proof for the existence of minimizers using only results from basic real analysis. Towards this aim, we will first show that a specific variant of \emph{Gauss's lemma} \cite[2.93]{gallot1990riemannian} holds for $g$ on $\GLn$ and continue by following the proof of the Hopf-Rinow theorem as given in \cite[Theorem 1.7.1]{Jost1998} (a similar proof can be found in \cite[Theorem 2.8]{docarmo1992}) as applied to our special case. Readers not interested in these rather basic proofs should skip the main part of Section \ref{section:existenceOfMinimizers} and continue with Theorem \ref{theorem:existenceAndParameterizationOfMinimizers}.

\subsection{Preparations}
\label{subsection:existencePreparations}
In the context of Riemannian geometry the function $\vPhiA\colon \gln\to\GLpn$ with
\begin{equation}
	\vPhiA(M) = A\exp(\sym M - \omega \skew M)\exp((1+\omega)\skew M)
\end{equation}
can be considered the \emph{geodesic exponential} at $A$ \cite[Definition 1.4.3]{Jost1998}. Note again that from this point of view the following lemma is a direct application of Gauss's lemma. We will prove it via direct computation.

\begin{lemma}
	Let $A\in\GLp(n)$, $\vM,\vT\in\gln$. Then
	\begin{equation}
		\fullg{\vPhiA(\vM)}(\vDPhiA{\vM}{\vM}, \vDPhiA{\vM}{\vT}) = \fullproduct{\vM,\vT}\,.\label{eq:gaussLemma}
	\end{equation}
\end{lemma}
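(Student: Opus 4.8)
The plan is to derive \eqref{eq:gaussLemma} from a first variation of the energy, performed entirely in coordinates in the spirit of Section~\ref{section:calculusOfVariations}, so that no results from abstract Riemannian geometry enter. Fix $A\in\GLpn$ and $\vM,\vT\in\gln$ and consider the (manifestly smooth) two-parameter family $\gamma_r(t)\colonequals\vPhiA(t(\vM+r\vT))$, $t\in[0,1]$, $r$ near $0$; I would evaluate $\eval{\tfrac{d}{dr}\ener(\gamma_r)}{r=0}$ in two ways. For fixed $r$ the curve $t\mapsto\gamma_r(t)$ is a geodesic by Theorem~\ref{theorem:parameterizationOfGeodesics}, and \eqref{eq:geodesicTangent} gives $\gamma_r(t)\inv\dot\gamma_r(t)=Q_r(t)^T(\vM+r\vT)Q_r(t)$ with $Q_r(t)\in\SOn$; hence, by the isotropy property \eqref{eq:isotropicProperty},
\[
	\ener(\gamma_r) \;=\; \int_0^1 \fullproduct{\gamma_r\inv\dot\gamma_r,\gamma_r\inv\dot\gamma_r}\,\dt \;=\; \int_0^1 \fullproduct{\vM+r\vT,\vM+r\vT}\,\dt \;=\; \fullnorm{\vM+r\vT}^2\,,
\]
so that $\eval{\tfrac{d}{dr}\ener(\gamma_r)}{r=0}=2\fullproduct{\vM,\vT}$, which is twice the right-hand side of \eqref{eq:gaussLemma}.

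For the second evaluation I would differentiate under the integral as in Section~\ref{subsection:generalDifferentialEquation}. Write $\tangent=\gamma_0\inv\dot\gamma_0$, let $V(t)=\eval{\tfrac{d}{dr}\gamma_r(t)}{r=0}$ and set $W(t)=\gamma_0(t)\inv V(t)$ (the object playing the role of $\deltaX$ there). A short computation using Lemma~\ref{lemma:invDerivative} and the product rule, entirely parallel to the derivation of \eqref{eq:localFullVariationTangenth}, gives $\eval{\tfrac{d}{dr}(\gamma_r\inv\dot\gamma_r)}{r=0}=\tangent W-W\tangent+\dot W$, and therefore
\[
	\tfrac12\eval{\tfrac{d}{dr}\ener(\gamma_r)}{r=0} \;=\; \int_0^1 \fullproduct{\tangent,[\tangent,W]}\,\dt \;+\; \int_0^1 \fullproduct{\tangent,\dot W}\,\dt\,.
\]
Using Lemma~\ref{lemma:productOfCommutators}, the cyclicity of the trace, and the geodesic equation \eqref{eq:generalDifferentialEquation} in the form $\tangent^T\tangent-\tangent\tangent^T=\tfrac{2\mu}{\mu+\mu_c}\,\dot\tangent$ (a symmetric, trace-free matrix), the first integrand collapses to $\fullproduct{\dot\tangent,W}$, so the two integrands add up to $\tfrac{d}{dt}\fullproduct{\tangent,W}$. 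Since $\gamma_r(0)=A$ for every $r$ we have $V(0)=0$, hence $W(0)=0$, and only the boundary term at $t=1$ survives:
\[
	\tfrac12\eval{\tfrac{d}{dr}\ener(\gamma_r)}{r=0} \;=\; \fullproduct{\tangent(1),W(1)} \;=\; \fullproduct{\vPhiA(\vM)\inv\vDPhiA{\vM}{\vM},\vPhiA(\vM)\inv\vDPhiA{\vM}{\vT}} \;=\; \fullg{\vPhiA(\vM)}(\vDPhiA{\vM}{\vM},\vDPhiA{\vM}{\vT})\,,
\]
since $\dot\gamma_0(1)=\vDPhiA{\vM}{\vM}$ and $V(1)=\vDPhiA{\vM}{\vT}$ by the chain rule. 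Comparing the two values of $\eval{\tfrac{d}{dr}\ener(\gamma_r)}{r=0}$ then yields \eqref{eq:gaussLemma}.

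I expect the only genuine work to be the bookkeeping in the second step: checking the variation formula $\eval{\tfrac{d}{dr}(\gamma_r\inv\dot\gamma_r)}{r=0}=[\tangent,W]+\dot W$ and, above all, the identity $\fullproduct{\tangent,[\tangent,W]}=\fullproduct{\dot\tangent,W}$, which is the one place where all three parameters $\mu,\mu_c,\kappa$ and the precise coefficient of the geodesic equation must cancel consistently. A more literal \enquote{direct computation} --- differentiating the product of exponentials in \eqref{eq:definitionPhiA} both radially (in direction $\vM$) and transversally (in direction $\vT$) via the power series of $\exp$ --- is also possible, but considerably heavier, since the derivative of the matrix exponential in a non-commuting direction is itself an integral; the variational argument above avoids this entirely. (One may, if desired, first reduce to $A=\id$ using left-invariance of $g$ together with $\vPhiA=A\,\vPhi$, but this is not strictly necessary.)
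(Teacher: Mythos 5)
Your proof is correct, and it takes a genuinely different route from the one in the paper. The paper proves \eqref{eq:gaussLemma} by direct computation: it applies the integral formula \eqref{eq:expDerivative} for the derivative of $\exp$ to write out $\vDPhi{\vM}{\vT}$, conjugates by the factors $\vQ$, $\vPs$, $\vRs$, invokes the commutativity of $\dev\sym\vM + \omega\skew\vM$ with $\sym\vM + \omega\skew\vM$ and of $\skew\vM$ with $\vRs$, and finds that every term of the resulting $s$-integral is in fact independent of $s$. Your argument is instead the classical first-variation proof of Gauss's lemma made concrete in this matrix setting: one side of the comparison is trivialized by the explicit formula $\ener(\gamma_r)=\fullnorm{\vM+r\vT}^2$, available because of \eqref{eq:geodesicTangent} and the isotropy \eqref{eq:isotropicProperty}, while the other side collapses to the boundary term $\fullproduct{\tangent(1),W(1)}$ once the integrand is recognized as an exact $t$-derivative. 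The identity you single out as the crux, $\fullproduct{\tangent,[\tangent,W]}=\fullproduct{\dot\tangent,W}$, does hold: Lemma~\ref{lemma:productOfCommutators} together with cyclicity of the trace gives $\fullproduct{\tangent,[\tangent,W]}=\tfrac{\mu+\mu_c}{2}\eucproduct{\tangent^T\tangent-\tangent\tangent^T,W}$, the geodesic equation \eqref{eq:generalDifferentialEquation} rewrites this as $\mu\,\eucproduct{\dot\tangent,W}$, and since $\dot\tangent$ is symmetric and trace-free, \eqref{eq:oneSidedIsoProduct} shows that $\fullproduct{\dot\tangent,W}=\mu\,\eucproduct{\dot\tangent,W}$ as well, so all three parameters cancel as needed. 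There is also no circularity, since Theorem~\ref{theorem:parameterizationOfGeodesics} is proved earlier and independently, so you are entitled to know that each $\gamma_r$ satisfies the geodesic ODE. The trade-off: your proof is shorter and more conceptual, isolating exactly the two structural properties (isotropy of $\fullproduct{\cdot,\cdot}$ and the commutator identity of Lemma~\ref{lemma:productOfCommutators}) that make Gauss's lemma work here, whereas the paper's is a pure closed-form verification that never invokes the geodesic equation and so stands on its own even if one has not yet established the parametrization theorem.
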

\begin{remark}
	Note carefully that $M$ occurs multiple times in \eqref{eq:gaussLemma}, including the direction of one of the derivatives. This is a necessary restriction; in general, the equality $\fullg{\vPhiA(\vM)}(\vDPhiA{\vM}{S}, \vDPhiA{\vM}{\vT}) = \fullproduct{S,\vT}$ does \emph{not} hold for arbitrary $S,\vT \in \gln$.
\end{remark}
\begin{proof}
	First note that the left-invariance of the Riemannian metric $g$ implies
	\begin{align}
		\fullg{\vPhiA(\vM)}(\vDPhiA{\vM}{\vM}, \vDPhiA{\vM}{\vT}) &= \fullg{A\vPhi(\vM)}(A\vDPhi{\vM}{\vM}, A\vDPhi{\vM}{\vT})\\
		&= \fullg{\vPhi(\vM)}(\vDPhi{\vM}{\vM}, \vDPhi{\vM}{\vT})\nonumber\,.
	\end{align}
	Using \eqref{eq:expSimpleDerivative} from the appendix, we compute
	\begin{align}
		\vDPhi{\vM}{\vM} &= (\Dexp{\sym \vM - \omega \skew \vM}{(\sym \vM - \omega \skew \vM)}) \: \exp((1+\omega)\skew \vM)\nonumber\\ &\quad+ \exp(\sym \vM - \omega \skew \vM) \:(\Dexp{(1+\omega)\skew \vM}{((1+\omega)\skew \vM)})\nonumber\\
		&= \exp(\sym \vM - \omega \skew \vM)\:(\sym \vM - \omega \skew \vM)\:\exp((1+\omega)\skew \vM)\nonumber\\ &\quad+ \exp(\sym \vM - \omega \skew \vM)\:((1+\omega)\skew \vM)\:\exp((1+\omega)\skew \vM)\nonumber\\
		&= \exp(\sym \vM - \omega \skew \vM) \:(\sym\vM + \skew\vM)\: \exp((1+\omega)\skew \vM)\nnl
		&= \exp(\sym \vM - \omega \skew \vM) \:\vM\: \underbrace{\exp((1+\omega)\skew \vM)}_{\equalscolon\vQ\in\SO(n)}\nnl
		&= \exp(\sym \vM - \omega \skew \vM) \:\vM\: Q\,,
	\end{align}
	while \eqref{eq:expDerivative} yields
	{\footnotesize
	\begin{align}
		&\vDPhi{\vM}{\vT}\nnl
		&= (\Dexp{\sym \vM - \omega \skew \vM}{(\sym \vT - \omega \skew \vT)})\: \exp((1+\omega)\skew \vM)\nonumber\\
		&\qquad+ \exp(\sym \vM - \omega \skew \vM)\:(\Dexp{(1+\omega)\skew \vM}{(1+\omega)\skew \vT})\nonumber\\
		&= \int\nolimits_0^1\exp(s(\sym \vM - \omega \skew \vM))\:(\sym \vT - \omega \skew \vT) \cdot\exp((1-s)(\sym \vM - \omega \skew \vM))\:\exp((1+\omega)\skew \vM)\,\ds\nonumber\\
		&\qquad+ \int\nolimits_0^1 \exp(\sym \vM - \omega \skew \vM)\:\exp(s(1+\omega)\skew \vM)\:((1+\omega)\skew \vT) \cdot\exp((1-s)(1+\omega)\skew \vM)\,\ds\nonumber\\
		&= \exp(\sym \vM - \omega \skew \vM) \;\bigg(\int\nolimits_0^1 [\:\overbrace{\exp((s-1)(\sym \vM - \omega \skew \vM))}^{\smash{\equalscolon\vPs\in\GLp(n)}} \cdot(\sym \vT - \omega \skew \vT)\:\exp((1-s)(\sym \vM - \omega \skew \vM))\nonumber\\
		&\qquad+\exp(s(1+\omega)\skew \vM)\:((1+\omega)\skew \vT)\;\smash{\underbrace{\exp(-s(1+\omega)\skew \vM)}_{\equalscolon\vRs\in\SO(n)}}\vphantom{\int\nolimits_a^b}\:]\ds \bigg) \cdot\exp((1+\omega)\skew \vM)\nonumber\\
		&= \exp(\sym \vM - \omega \skew \vM) \int\nolimits_0^1 [\vPs (\sym \vT - \omega \skew \vT) \vPs^{-1} + (1+\omega)\vRs^T (\skew \vT) \vRs]\: \ds \cdot \exp((1+\omega)\skew \vM)\,.
	\end{align}
	}
	We also find
	\begin{align}
		\vPhi(\vM)^{-1} = \overbrace{\exp((1+\omega)\skew \vM)^{-1}}^{=\vQ^T}\:\exp(\sym \vM - \omega \skew \vM)^{-1}\,.
	\end{align}
	Hence we can compute
	{\footnotesize
	\begin{align}
		&\hspace{-12mm}\fullg{\vPhi(\vM)}(\vDPhi{\vM}{\vM},\:\vDPhi{\vM}{\vT})\nonumber\\
		&= \fullproduct{\vPhi(\vM)^{-1}\:\vDPhi{\vM}{\vM},\:\vPhi(\vM)^{-1}\:\vDPhi{\vM}{\vT}}\nonumber\\
		&= \fullproduct{\vQ^T\vM\vQ,\:\vQ^T\cdot\int\nolimits_0^1 [\vPs (\sym \vT - \omega \skew \vT) \vPs^{-1} + (1+\omega)\vRs^T (\skew \vT) \vRs]\: \ds \cdot\vQ}\nonumber\\
		&= \fullproduct{\vM, \:\int\nolimits_0^1 [\vPs (\sym \vT - \omega \skew \vT) \vPs^{-1} + (1+\omega)\vRs^T (\skew \vT) \vRs]\:\ds}\nonumber\\
		&= \int\nolimits_0^1 \fullproduct{\vM, \:\vPs (\sym \vT - \omega \skew \vT) \vPs^{-1} + (1+\omega)\vRs^T (\skew \vT) \vRs}\:\ds\nonumber\\
		&= \int\nolimits_0^1 \innerproduct{\mu\dev\sym\vM + \mu_c\skew\vM, \:\vPs (\sym \vT - \omega \skew \vT) \vPs^{-1}}\nonumber\\
		&\quad + \frac{\kappa}{n}(\tr M) \tr(\vPs (\sym \vT - \omega \skew \vT) \vPs^{-1}) + (1+\omega)\fullproduct{\vM, \:\smash{\underbrace{\vRs^T (\skew \vT) \vRs}_{\in\so(n)}}} \:\ds\nonumber\\
		&= \int\nolimits_0^1 \innerproduct{\vPs^T(\mu\dev\sym\vM + \mu_c\skew\vM)\vPs^{-T}, \:\sym \vT - \omega \skew \vT}\nonumber\\
		&\quad + \frac{\kappa}{n}(\tr M) \tr(\sym \vT - \omega \skew \vT) + (1+\omega)\,\mu_c\,\innerproduct{\skew\vM, \:\vRs^T (\skew \vT) \vRs}\:\ds\nonumber\\
		&= \int\nolimits_0^1 \mu\,\innerproduct{\vPs^T(\dev\sym\vM + \omega\skew\vM)\vPs^{-T}, \:\sym \vT - \omega \skew \vT}\nonumber\\
		&\quad + \frac{\kappa}{n}(\tr M) (\tr\vT) + (1+\omega)\,\mu_c\,\innerproduct{\vRs\skew\vM\vRs^T, \:\skew \vT}\:\ds\label{eq:gaussLemmaTerm}\,.
	\end{align}
	}
	It is not difficult to see that the matrices
	\begin{equation}
		\dev\sym\vM + \omega\skew\vM,\quad \sym \vM +\omega \skew \vM
	\end{equation}
	commute.
	Therefore, since $\vPs^T = \exp((s-1)(\sym \vM + \omega \skew \vM))$, we can use equation \eqref{eq:expCommutation} to infer that $\dev\sym\vM + \omega\skew\vM$ and $\vPs^T$ commute. Analogously, we see that $\skew M$ and $R_s=\exp(\exp((1+\omega)\skew \vM))$ commute as well. Thus expression \eqref{eq:gaussLemmaTerm} can be simplified to
	\begin{align}
		&\quad\: \int\nolimits_0^1 \mu\,\innerproduct{(\dev\sym\vM + \omega\skew\vM)\vPs^T\vPs^{-T}, \:\sym \vT - \omega \skew \vT}\\
		&\qquad\quad + \frac{\kappa}{n}(\tr M) (\tr\vT) + (1+\omega)\mu_c\innerproduct{\skew\vM\vRs\vRs^T, \skew \vT}\,\ds\nonumber\\
		&\qquad\qquad= \int\nolimits_0^1 \mu\innerproduct{\dev\sym\vM + \omega\skew\vM, \sym \vT - \omega \skew \vT}\nonumber\\
		&\qquad\qquad\qquad\qquad + \frac{\kappa}{n}(\tr M) (\tr\vT) + (1+\omega)\mu_c\innerproduct{\skew\vM, \skew \vT}\,\ds\nonumber\\
		&\qquad\qquad= \int\nolimits_0^1 \mu\innerproduct{\dev\sym\vM, \dev\sym \vT} + \mu_c\innerproduct{\skew\vM, - \omega \skew \vT}\nonumber\\
		&\qquad\qquad\qquad\qquad + \frac{\kappa}{n}(\tr M) (\tr\vT) + (\mu_c+\omega\mu_c)\innerproduct{\skew\vM, \skew \vT}\,\ds\nonumber\\
		&\qquad\qquad = \int\nolimits_0^1 \mu\innerproduct{\dev\sym\vM, \dev\sym \vT} + \mu_c\innerproduct{\skew\vM, \skew \vT} + \frac{\kappa}{n}(\tr M) (\tr\vT)\:\ds \;=\; \fullproduct{M,T}\:.\qedhere
	\end{align}
\end{proof}

This lemma can now be used to establish a lower bound for the length of curves \enquote{close to $A$}. To do so, we choose $\eps>0$ such that $\vPhiA$ is a diffeomorphism from $B_{\eps}(0)\subseteq\gln$ to an open neighbourhood of $A$ in $\GLp(n)$; note that, according to \eqref{eq:derivativeGeodesicZero}, $D\vPhiA[0]$ is surjective and therefore nonsingular. Then any \enquote{short enough} curve $Y$ with $Y(0)=A$ can be represented as $Y=\vPhiA\circ\gamma$ with a curve $\gamma$ in $B_{\eps}(0)\subseteq\gln$. To compute the length of $Y$, we need the following lemma, which is a modified version of Proposition 5.3.2 in \cite{Eschenburg2007}:

\begin{lemma}
\label{lemma:curveLengthLocalLowerEstimate}
	Let $\gamma\in C^0([0,1];B_\eps(0))$ be a piecewise differentiable curve in $B_\eps(0)\subseteq\gln$ with $\gamma(0)=0$, $\gamma(1)=\vM$. Then
	\begin{align}
		\len(\vPhiA\circ\gamma) &\ge \fullnorm{\vM}\,,\label{eq:minLengthOfShortCurves}\\
		\ener(\vPhiA\circ\gamma) &\ge \fullnorm{\vM}^2\,.
	\end{align}
\end{lemma}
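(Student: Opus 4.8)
The plan is to run the classical Gauss-lemma argument: decompose $\gamma$ into a radial part $r(t)\colonequals\fullnorm{\gamma(t)}$ and an angular part $u(t)\colonequals\gamma(t)/r(t)$ (defined wherever $\gamma(t)\neq 0$), use \eqref{eq:gaussLemma} to show that the derivative $D\vPhiA[\gamma(t)]$ sends $u(t)$ to a $g$-unit vector that is $g$-orthogonal to $\vDPhiA{\gamma(t)}{\dot u(t)}$, conclude that the speed of $\vPhiA\circ\gamma$ with respect to $g$ is at least $|\dot r(t)|$, and integrate.

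First I would dispose of the trivial case $\vM=0$ and otherwise handle the zero set of $\gamma$, where the polar decomposition and the chain rule for $r=\fullnorm{\gamma}$ break down. Set $t_0\colonequals\max\{t\in[0,1]\setvert\gamma(t)=0\}$; this exists since $\gamma(0)=0$ and $\gamma$ is continuous, and $t_0<1$ because $\gamma(1)=\vM\neq 0$. As the integrands defining $\len$ and $\ener$ are nonnegative, it suffices to estimate the contributions over $[t_0,1]$. On $(t_0,1]$ the curve $\gamma$ never vanishes, so there $r$ is positive and piecewise $C^1$, $\fullnorm{u}\equiv 1$, and $\dot\gamma=\dot r\,u+r\,\dot u$ at each point of differentiability; moreover $r$ extends continuously to $t_0$ with $r(t_0)=0$, and the Cauchy--Schwarz estimate $|\dot r|=|\fullproduct{\gamma,\dot\gamma}|/\fullnorm{\gamma}\le\fullnorm{\dot\gamma}$ makes $r$ Lipschitz, hence absolutely continuous, on $[t_0,1]$, so that $\int_{t_0}^1\dot r\,\dt=r(1)-r(t_0)=\fullnorm{\vM}$.

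The crux is the following consequence of \eqref{eq:gaussLemma}. Applying that identity with $\vM$ replaced by $\gamma(t)=r(t)u(t)$, using the linearity of $D\vPhiA[\gamma(t)]$ and the bilinearity of $\fullproduct{\cdot,\cdot}$, and dividing by $r(t)>0$, we obtain
\[
	\fullg{\vPhiA(\gamma(t))}\bigl(\vDPhiA{\gamma(t)}{u(t)},\,\vDPhiA{\gamma(t)}{\vT}\bigr)=\fullproduct{u(t),\,\vT}\qquad\text{for all }\vT\in\gln\,.
\]
Taking $\vT=u(t)$ gives $\norm{\vDPhiA{\gamma(t)}{u(t)}}_{\vPhiA(\gamma(t))}=1$; taking $\vT=\dot u(t)$ and using $0=\ddt\fullproduct{u,u}=2\fullproduct{u,\dot u}$ gives $\fullg{\vPhiA(\gamma(t))}(\vDPhiA{\gamma(t)}{u(t)},\vDPhiA{\gamma(t)}{\dot u(t)})=0$. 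Writing $Z(t)\colonequals\vPhiA(\gamma(t))$ and expanding $\ddt(\vPhiA\circ\gamma)(t)=\vDPhiA{\gamma(t)}{\dot\gamma(t)}=\dot r(t)\,\vDPhiA{\gamma(t)}{u(t)}+r(t)\,\vDPhiA{\gamma(t)}{\dot u(t)}$ into these two $g_{Z(t)}$-orthogonal summands then yields
\[
	\norm{\ddt(\vPhiA\circ\gamma)(t)}_{Z(t)}^2=\dot r(t)^2+r(t)^2\,\norm{\vDPhiA{\gamma(t)}{\dot u(t)}}_{Z(t)}^2\;\ge\;\dot r(t)^2\,,
\]
i.e.\ $\norm{\ddt(\vPhiA\circ\gamma)(t)}_{Z(t)}\ge|\dot r(t)|$ for almost every $t\in[t_0,1]$.

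Integrating this inequality over $[t_0,1]$ gives $\len(\vPhiA\circ\gamma)\ge\int_{t_0}^1|\dot r|\,\dt\ge\bigl|\int_{t_0}^1\dot r\,\dt\bigr|=\fullnorm{\vM}$, and integrating its squared form together with the Hölder inequality $\bigl(\int_{t_0}^1\dot r\,\dt\bigr)^2\le(1-t_0)\int_{t_0}^1\dot r^2\,\dt\le\int_{t_0}^1\dot r^2\,\dt$ (using $[t_0,1]\subseteq[0,1]$) gives $\ener(\vPhiA\circ\gamma)\ge\fullnorm{\vM}^2$. The step I expect to cost the most care is the treatment of the zero set of $\gamma$ outlined above: one has to verify that restricting to $[t_0,1]$ is harmless, that $r$ is regular enough there for the fundamental theorem of calculus (which the bound $|\dot r|\le\fullnorm{\dot\gamma}$ supplies), and that the polar decomposition and its orthogonality properties are valid throughout $(t_0,1]$; granting this, the Gauss-lemma identity does the rest.
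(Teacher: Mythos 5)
Your proof is correct and follows essentially the same path as the paper's: decompose $\dot\gamma$ into a radial component proportional to $\gamma$ and a transverse component, use the Gauss-type identity \eqref{eq:gaussLemma} to kill the cross term, discard the nonnegative transverse contribution, and integrate $|\dot r|$ over $[t_0,1]$ after peeling off the zero set of $\gamma$. The only cosmetic differences are that you normalize $\gamma$ to a $\fullnorm{\cdot}$-unit vector $u$ (making the Pythagorean step explicit) rather than carrying the scalar $\vr(t)$ as in the paper, and that you re-derive the energy bound from Hölder on $[t_0,1]$ instead of invoking Lemma~\ref{lemma:lengthEnergyInequality} directly on $[0,1]$; both variants are equivalent.
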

\begin{proof}
	Let such a curve $\gamma$ be given. Then the length of $\vPhiA \circ \gamma$ is
	\begin{align}
		\len(\vPhiA\circ\gamma) &= \int\nolimits_0^1 \sqrt{\fullg{\vPhiA(\gamma)}\Big(\ddt(\vPhiA\circ\gamma)(t),\: \ddt(\vPhiA\circ\gamma)(t)\Big)\:} \dt\nonumber\\
		&= \int\nolimits_0^1 \sqrt{\fullg{\vPhiA(\gamma)}(\vDPhiA{\gamma}{\dot\gamma},\: \vDPhiA{\gamma}{\dot\gamma})\:}\dt\,.
	\end{align}
	Note that since $\gamma$ and $\dot\gamma$ are generally not equal, the previous lemma can not be applied directly. Therefore we decompose $\dot\gamma$ into the sum of $\gammaone$ and $\gammatwo$, where $\gammaone$ is a multiple of $\gamma$ and $\gammatwo$ is orthogonal to $\gamma$ with regard to the inner product on $\gln$: define $\vr,\gammaone,\gammatwo$ by
	\begin{align}
		\vr(t) &\colonequals	\begin{cases}
								\frac{\fullproduct{\gamma, \dot\gamma}}{\fullnorm{\gamma(t)}^2}\quad &: \quad \gamma(t)\neq 0\\
								\qquad 0 &: \quad \gamma(t) = 0
							\end{cases}\:,\\
		\gammaone(t) &\colonequals \vr(t)\,\gamma(t)\,,\nnl
		\gammatwo(t) &\colonequals \dot\gamma(t) - \gammaone(t)\,.\nonumber
	\end{align}
	Then
	\begin{align}
		\dot\gamma = \gammaone + \gammatwo\,,\qquad \fullproduct{\gamma, \gammatwo} = 0
	\end{align}
	and thus
	\begin{align}
		\len(\vPhiA\circ\gamma) &= \int\nolimits_0^1 \sqrt{\fullg{\vPhiA(\gamma)}(\vDPhiA{\gamma}{\dot\gamma},\: \vDPhiA{\gamma}{\dot\gamma})}\:\dt\nonumber\\
		&= \int\nolimits_0^1 \sqrt{\fullg{\vPhiA(\gamma)}(\vDPhiA{\gamma}{(\gammaone + \gammatwo)},\: \vDPhiA{\gamma}{(\gammaone + \gammatwo)})}\:\dt\nonumber\\
		&= \int\nolimits_0^1 \sqrt{\fullg{\vPhiA(\gamma)}(\vDPhiA{\gamma}{\gammaone} + \vDPhiA{\gamma}{\gammatwo},\: \vDPhiA{\gamma}{\gammaone} + \vDPhiA{\gamma}{\gammatwo})}\:\dt\nonumber\\
		&\ge \int\nolimits_0^1 \sqrt{\fullg{\vPhiA(\gamma)}(\vDPhiA{\gamma}{\gammaone},\: \vDPhiA{\gamma}{\gammaone}) + 2\,\fullg{\vPhiA(\gamma)}(\vDPhiA{\gamma}{\gammaone},\: \vDPhiA{\gamma}{\gammatwo})}\:\dt \label{eq:localGeodesicUniquenessInequality}\\
		&= \int\nolimits_0^1 \sqrt{\vr(t)^2\,\fullg{\vPhiA(\gamma)}(\vDPhiA{\gamma}{\gamma},\: \vDPhiA{\gamma}{\gamma}) + 2\,\vr(t)\fullg{\vPhiA(\gamma)}(\vDPhiA{\gamma}{\gamma},\: \vDPhiA{\gamma}{\gammatwo})}\:\dt\nonumber\\
		&\mathrlap{\overset{\eqref{eq:gaussLemma}}{=}}\hphantom{=\:\,} \int\nolimits_0^1 \sqrt{\vr(t)^2\,\fullproduct{\gamma, \gamma} + 2\,\vr(t)\smash{\underbrace{\fullproduct{\gamma, \gammatwo}}_{=0}} }\:\dt\,.\vphantom{\underbrace{\fullproduct{\gamma, \gammatwo}}_{=0}}\nonumber
	\end{align}
	Without loss of generality we assume $M \neq 0$. Since $\gamma$ is continuous and $\gamma(1)=M$, we can choose $a\colonequals\max\{s\in[0,1] \setvert \gamma(s)=0\}$ such that $\gamma(t) \neq 0$ for all $t>a$. We obtain
	\begin{align}
		\len(\vPhi\circ\gamma) &= \int\nolimits_0^1 \sqrt{\vr(t)^2\fullproduct{\gamma, \gamma}} \: \dt = \int\nolimits_0^1  \abs{\vr(t)}\, \fullnorm{\gamma} \:\dt \\
		&\ge \int\nolimits_a^1  \vr(t)\, \fullnorm{\gamma} \:\dt = \int\nolimits_a^1 \frac{\fullproduct{\gamma, \dot\gamma}}{\fullnorm{\gamma}} \:\dt = \int\nolimits_a^1 \ddt \fullnorm{\gamma(t)}\:\dt\,. \nonumber
	\end{align}
	The curve $\gamma$ is piecewise differentiable by assumption, so choose $a = a_1 < \dotsc <a_n < a_{n+1} = 1$ such that $\gamma$ is continuously differentiable on $(a,1) \setminus \{a_1, \dotsc, a_n\}$.\\ We compute
	\begin{align}
		\len(\vPhi\circ\gamma) &\ge \int\nolimits_a^1 \ddt \fullnorm{\gamma(t)}\:\dt = \sum_{i=1}^m\int\nolimits_{a_i}^{a_{i+1}} \ddt \fullnorm{\gamma(t)}\:\dt = \sum_{i=1}^m (\fullnorm{\gamma(a_{i+1})} - \fullnorm{\gamma(a_i)}\,)\nonumber\\
		&= \fullnorm{\gamma(a_{m+1})} - \fullnorm{\gamma(a_1)} = \fullnorm{\gamma(1)} \:-\: 0 \;=\; \fullnorm{\vM}\,.
	\end{align}
	Finally, Lemma \ref{lemma:lengthEnergyInequality} yields
	\begin{align}
		\ener(\vPhi\circ\gamma) &\ge \len(\vPhi\circ\gamma)^2 \ge \fullnorm{\vM}^2\,.\qedhere
	\end{align}
\end{proof}

We can now give a lower bound for the length of a curve $Z\in\adsetAB\setvert[0,1]\to\GLp(n)$ with $B\notin\vPhiA(B_{\eps}(0))$: with $t_0=\min\{t\in[0,1] \setvert Z(t)\notin\vPhiA(B_{\eps}(0))\}$, we can write $Z(t)=\vPhiA(\gamma(t))$ for $t\in[0,t_0)$, and $Z(t_0)\notin\vPhiA(B_{\eps}(0))$ implies
\begin{align}
	\lim_{t\nearrow t_0} \fullnorm{\gamma(t)} = \eps\,.
\end{align}
Thus, using \eqref{eq:minLengthOfShortCurves}, we find $\len(Z)\geq \eps$ and therefore
\begin{align}
	\dg(A,B) > \eps \quad \forall \, B\notin\vPhiA(B_{\eps}(0))\,.
\end{align}
Furthermore, for $\fullnorm{M}<\eps$, we can directly compute the distance between $A$ and $B=\vPhiA(M)\in B_{\eps}(A)$: For the geodesic curve $X\in\adsetAB\colon[0,1]\to\GLp(n)$ with
\begin{align}
	X(t) = \vPhiA(t\,M)
\end{align}
we find $X(0)=A,\:X(1)=\vPhiA(M)=B$ and, according to equation \eqref{eq:geodesicLength}, $\len(X)=\fullnorm{M}$. Thus $\dg(A,B)\leq\fullnorm{M}\,$. For any other curve $Y\in\adsetAB\colon[0,1]\to\GLp(n)$, we find that $Y$ is either contained in $B_\eps(A)$, in which case \eqref{eq:minLengthOfShortCurves} implies $\len(Y)\geq\fullnorm{M}\,$; or $Y$ leaves $B_\eps(A)$. But then, as shown above, $\len(Y)\geq\eps>\fullnorm{M}\,$, and therefore
\begin{align}
	\dg(A,B)=\fullnorm{M}\,,
\end{align}
and $X$ is a length minimizer. Finally these positive lower bounds imply
\begin{equation}
	\dg(A,N)>0
\end{equation}
for all $B\neq A$. Therefore $\dg$ defines a \emph{metric} on $\GLpn$. Note also that equality in \eqref{eq:localGeodesicUniquenessInequality} holds if and only if
	\begin{equation}
		\fullg{\vPhiA(\gamma)}(\vDPhiA{\gamma}{\gammatwo},\: \vDPhiA{\gamma}{\gammatwo}) = 0
	\end{equation}
	on $[0,1]$. Since $\fullg{\vPhiA(\gamma)}$ is positive definite, this is equivalent to $0=\vDPhiA{\gamma}{\gammatwo} = A\vDPhi{\gamma}{\gammatwo}$, and since $D\vPhi$ is non-singular in a neighbourhood of $0$, this equality holds (for small enough $\eps$) if and only if
	\[
		0 = \gammatwo(t) = \dot\gamma(t) - \gammaone(t) = \dot\gamma(t) - \vr(t)\,\gamma(t)
	\]
	for all $t\in [0,1]$. But then $\gamma$ and $\dot\gamma$ are linearly dependent, and thus $\gamma$ is a parameterization of a straight line in $\gln$ connecting $0$ and $M$. Hence $\vPhiA\circ\gamma$ is a reparametrization of the curve $t\mapsto\vPhiA(tM) = X(t)$, which shows that this length minimizing curve is (locally) uniquely determined.
The above considerations are summarized in the following lemma:

\begin{lemma}
	\label{lemma:geodesicSpheres}
	Let $A\in\GLp(n)$. Then there exists $\eps(A)>0$ such that for all $0<\eps<\eps(A)$:
	\begin{itemize}
	\item[i)] $\vPhiA\colon \gln\supseteq B_\eps(0)\to B_\eps(A)\subseteq\GLp(n)$ is a diffeomorphism;
	\item[ii)] for every $B\in B_\eps(A)$ there exists a length minimizer connecting $A$ and $B$, which is uniquely determined up to reparameterization;
	\item[iii)] $\dg(A,\vPhiA(M)) = \fullnorm{M}$ if $\fullnorm{M}\leq\eps$;
	\item[iv)] $\vPhiA(S_\eps(0))=S_\eps(A)$,
	\end{itemize}
	where
	\[
		S_\eps(0)=\{M\in\gln \setvert \fullnorm{M}=\eps\}
	\]
	and
	\[\pushQED{\qed}
		S_\eps(A)=\{B\in\GLp(n) \setvert \dg(A,B)=\eps\}\,.\qedhere
	\]\popQED
\end{lemma}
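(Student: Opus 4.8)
The plan is to observe that every assertion of the lemma has already been obtained in the computations preceding its statement, so the proof consists in fixing the constant $\eps(A)$ once and then assembling the pieces in the right order. First I would define $\eps(A)$: by \eqref{eq:derivativeGeodesicZero} we have $D\vPhiA[0].H = AH$, which is invertible since $A\in\GLpn$, so the inverse function theorem yields some $\rho>0$ for which $\vPhiA$ restricts to a diffeomorphism of $B_\rho(0)\subseteq\gln$ onto an open neighbourhood of $A$ in $\GLpn$. Shrinking $\rho$ if necessary so that $D\vPhi$ remains nonsingular on all of $B_\rho(0)$ (this is needed for the uniqueness statement in (ii)), I would set $\eps(A)\colonequals\rho$ and then verify all four claims for every $0<\eps<\eps(A)$.

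For (iii) I would argue as in the text: for $\fullnorm{M}\le\eps$ the geodesic $X(t)=\vPhiA(tM)$ satisfies $X(0)=A$, $X(1)=\vPhiA(M)$ and $\len(X)=\fullnorm{M}$ by \eqref{eq:geodesicLength}, hence $\dg(A,\vPhiA(M))\le\fullnorm{M}$; conversely, choosing $\eps<\eps'<\eps(A)$, any competing curve $Y$ from $A$ to $\vPhiA(M)$ either stays inside $\vPhiA(B_{\eps'}(0))$, in which case it can be written $\vPhiA\circ\gamma$ with $\gamma(0)=0$, $\gamma(1)=M$ and Lemma \ref{lemma:curveLengthLocalLowerEstimate} gives $\len(Y)\ge\fullnorm{M}$, or it leaves $\vPhiA(B_{\eps'}(0))$, in which case applying the same lemma up to the first exit time (at which $\fullnorm{\gamma}$ tends to $\eps'$) gives $\len(Y)\ge\eps'>\fullnorm{M}$. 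This proves (iii), and it also records the estimate $\dg(A,B)\ge\eps$ for every $B\notin\vPhiA(B_\eps(0))$. Claim (i) is then immediate: $\vPhiA(B_\eps(0))\subseteq B_\eps(A)$ by (iii), while $B_\eps(A)\subseteq\vPhiA(B_\eps(0))$ is the exit estimate, so $\vPhiA$ is a diffeomorphism of $B_\eps(0)$ onto the metric ball $B_\eps(A)$.

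For (ii) I would combine (i) and (iii): a point $B\in B_\eps(A)$ equals $\vPhiA(M)$ with $\fullnorm{M}<\eps$, and the geodesic $t\mapsto\vPhiA(tM)$ has length $\fullnorm{M}=\dg(A,B)$, hence is a length minimizer. Uniqueness up to reparametrization is exactly the equality discussion following Lemma \ref{lemma:curveLengthLocalLowerEstimate}: equality in \eqref{eq:localGeodesicUniquenessInequality} forces $\gammatwo\equiv 0$, i.e.\ $\dot\gamma$ parallel to $\gamma$, so $\gamma$ parametrizes the segment from $0$ to $M$ and $\vPhiA\circ\gamma$ is a reparametrization of $t\mapsto\vPhiA(tM)$. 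Finally, for (iv): if $\fullnorm{M}=\eps$ then $\dg(A,\vPhiA(M))=\eps$ by (iii), giving $\vPhiA(S_\eps(0))\subseteq S_\eps(A)$; conversely, if $\dg(A,B)=\eps$, choosing $\eps<\eps'<\eps(A)$ and invoking the exit estimate with $\eps'$ shows $B=\vPhiA(M)$ for some $\fullnorm{M}<\eps'$, and then (iii) forces $\fullnorm{M}=\eps$, so $B\in\vPhiA(S_\eps(0))$.

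There is no genuinely hard step here: Gauss's lemma \eqref{eq:gaussLemma} and the length lower bound of Lemma \ref{lemma:curveLengthLocalLowerEstimate} have already done the analytic work, and the exit-time estimate has been carried out in the paragraphs above the statement. The only point requiring care is the bookkeeping of the constant: one must fix a single $\eps(A)$ that simultaneously makes $\vPhiA$ a diffeomorphism on $B_{\eps(A)}(0)$, keeps $D\vPhi$ nonsingular there, and leaves room for the slightly larger auxiliary radius $\eps'$ used in the borderline cases $\fullnorm{M}=\eps$ of (iii) and (iv).
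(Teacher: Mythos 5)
Your proposal is correct and follows essentially the same route as the paper, which proves this lemma in the text preceding its statement (via Gauss's lemma, Lemma~\ref{lemma:curveLengthLocalLowerEstimate}, the exit-time estimate, and the geodesic length formula~\eqref{eq:geodesicLength}) and then states the lemma as a summary. The only refinement you add is the explicit auxiliary radius $\eps'$ with $\eps<\eps'<\eps(A)$ for the borderline case $\fullnorm{M}=\eps$, which the paper leaves implicit in allowing all $0<\eps<\eps(A)$.
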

\begin{corollary}
\label{corollary:metricProperty}
	The geodesic distance $\dg$ is a \emph{metric} on $\GLpn$.\qed
\end{corollary}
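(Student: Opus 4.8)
The plan is to check the two properties that a distance function must have in order to be a metric on $\GLpn$, namely that $0<\dg(A,B)<\infty$ for all $A,B\in\GLpn$ with $A\neq B$; the remaining axioms (symmetry, vanishing on the diagonal and the triangle inequality) have already been recorded in the remark following the definition of $\dg$, so $\dg$ is at least a distance function on $\GLpn$.

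For the strict positivity I would simply invoke Lemma~\ref{lemma:geodesicSpheres} together with the discussion immediately preceding it. Concretely: given $A\neq B$, fix $0<\eps<\eps(A)$. If $B\in B_\eps(A)=\vPhiA(B_\eps(0))$, then $B=\vPhiA(M)$ for a unique $M\in B_\eps(0)$, and $M\neq 0$ because $\vPhiA$ is injective on $B_\eps(0)$ with $\vPhiA(0)=A\neq B$; hence $\dg(A,B)=\fullnorm{M}>0$ by part~iii) of the lemma. If instead $B\notin B_\eps(A)$, then every $Y\in\adsetAB$ must leave $B_\eps(A)$, and \eqref{eq:minLengthOfShortCurves} applied to the portion of $Y$ up to its first exit time forces $\len(Y)\ge\eps$, so that $\dg(A,B)\ge\eps>0$. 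This is exactly the argument already sketched in the paragraph before Lemma~\ref{lemma:geodesicSpheres}, so nothing genuinely new is required here.

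The finiteness is the only step I expect to need a real (if classical) argument, and it is where the main obstacle lies, since it rests on the connectedness of $\GLpn$. I would first reduce to the base point $\id$ via the left-invariance $\dg(A,B)=\dg(\id,A^{-1}B)$, so that it suffices to show $\dg(\id,C)<\infty$ for every $C\in\GLpn$. Then I would argue by connectedness: let $U=\{C\in\GLpn \setvert \dg(\id,C)<\infty\}$. Since $\dg(\id,\id)=0$ we have $\id\in U$, so $U\neq\emptyset$; by part~i) of Lemma~\ref{lemma:geodesicSpheres} each set $B_\eps(C)=\vPhi_C(B_\eps(0))$ with $0<\eps<\eps(C)$ is an open neighbourhood of $C$ in $\GLpn$, and the triangle inequality shows that $B_\eps(C)\subseteq U$ whenever $C\in U$, and $B_\eps(C)\subseteq\GLpn\setminus U$ whenever $C\notin U$; hence $U$ and its complement are both open. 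As $\GLpn$ is connected (as already noted in the introduction), $U=\GLpn$, so $\dg(\id,C)<\infty$ for all $C$, and therefore $\dg(A,B)<\infty$ for all $A,B\in\GLpn$. Alternatively, finiteness can be obtained constructively: writing $C=QP$ with $Q\in\SOn$, $P\in\PSymn$ by polar decomposition and choosing $W\in\son$ with $\exp W=Q$, the curve obtained by concatenating $t\mapsto\exp(tW)$ on $[0,1]$ with $t\mapsto Q\exp(t\log P)$ on $[1,2]$ is a piecewise differentiable regular curve in $\GLpn$ from $\id$ to $C$ of finite length, the respective integrands being continuous on the compact parameter intervals. Either way, combining finiteness and strict positivity with the distance-function properties shows that $\dg$ is a metric on $\GLpn$; the one ingredient not already established earlier in the paper is the connectedness (equivalently, path-connectedness through admissible curves) of $\GLpn$, which is classical.
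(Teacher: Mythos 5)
Your argument matches the paper's approach. Strict positivity is exactly what the lower bounds in the run-up to Lemma~\ref{lemma:geodesicSpheres} deliver, and the paper concludes the corollary from just that. You go further in explicitly establishing finiteness, whereas the paper relies on its earlier (unproved) remark that $\dg(A,B)<\infty$ whenever $A$ and $B$ lie in the same connected component; your clopen argument using the neighbourhoods $B_\eps(A)=\vPhiA(B_\eps(0))$ furnished by Lemma~\ref{lemma:geodesicSpheres} is a clean way to fill this gap, and the polar-decomposition path is a good constructive alternative (with the trivial caveat that when $Q=\id$ or $P=\id$ the corresponding constant segment must be dropped so that the pieces remain regular in the sense required by $\adm^1$).
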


\subsection{Global existence of minimizers}

After these preparations, we can now prove the global existence of length minimizing curves. We will closely follow the proof of the Hopf-Rinow theorem as given in \cite[Theorem 1.4.8]{Jost1998}.

\begin{proposition}
	\label{prop:hopfRinow}
	Let $A, B \in \GLp(n)$. Then there exists a length minimizing curve connecting $A$ and $B$, i.e.\ $X\in\adset{A}{B}$ with $\len(X) = \dg(A,B)$.
\end{proposition}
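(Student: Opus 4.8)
The plan is to adapt the classical proof of the Hopf--Rinow theorem, using the globally defined geodesic exponential $\vPhiA$ from Theorem~\ref{theorem:parameterizationOfGeodesics} together with the local structure provided by Lemma~\ref{lemma:geodesicSpheres}. Assume $A \neq B$ and write $r \colonequals \dg(A,B)$, which is finite and positive since $\GLp(n)$ is path-connected and open in $\Rnn$ and $\dg$ is a metric (Corollary~\ref{corollary:metricProperty}). If $r < \eps(A)$ there is nothing to prove: choosing $\eps$ with $r < \eps < \eps(A)$, the point $B$ must lie in $\vPhiA(B_\eps(0))$ (otherwise the estimates preceding Lemma~\ref{lemma:geodesicSpheres} force $\dg(A,B) > \eps > r$), so $B = \vPhiA(M)$ with $\fullnorm{M} = \dg(A,B) = r$ by part~iii), and the geodesic $t \mapsto \vPhiA(tM)$ on $[0,1]$ has length $\fullnorm{M} = r$ by~\eqref{eq:geodesicLength}, hence is a length minimizer. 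So we may assume $r \geq \eps(A)$ and fix a radius $\rho$ with $0 < \rho < \min\{r,\eps(A)\}$.

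The first real step is the ``base step'' of the Hopf--Rinow argument performed at $A$. The geodesic sphere $S_\rho(A) = \vPhiA(S_\rho(0))$ (part~iv)) is the continuous image of the Euclidean sphere $S_\rho(0) \subset \gln$, hence compact, and $p \mapsto \dg(p,B)$ is continuous (since $\dg$ is a metric that is locally comparable to $\fullnorm{\cdot}$ near any point), so it attains its minimum over $S_\rho(A)$ at some $p_0 = \vPhiA(\rho M_0)$ with $\fullnorm{M_0} = 1$. Every curve $Z \in \adsetAB$ must meet $S_\rho(A)$: because $\rho < r$ we have $B \notin \vPhiA(B_\rho(0))$, and $\vPhiA(B_\rho(0))$ is an open neighbourhood of $A$ whose topological boundary equals $\vPhiA(S_\rho(0)) = S_\rho(A)$ (as $\vPhiA$ is a diffeomorphism on the open ball $B_{\eps(A)}(0) \supset \overline{B_\rho(0)}$), so $Z$ leaves it across $S_\rho(A)$ at some first time $t^*$. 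Splitting $Z$ at $t^*$ and using $\dg(A,Z(t^*)) = \rho$ gives $\len(Z) \geq \dg(A,Z(t^*)) + \dg(Z(t^*),B) \geq \rho + \dg(p_0,B)$; taking the infimum over $Z$ and combining with the reverse triangle inequality $\dg(A,B) \leq \dg(A,p_0) + \dg(p_0,B) = \rho + \dg(p_0,B)$ yields $\dg(p_0,B) = r - \rho$. Set $X(t) \colonequals \vPhiA(tM_0)$; this is a regular geodesic of unit speed with $X(0) = A$ and $\dg(X(\rho),B) = r - \rho$.

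Then I would run a connectedness argument on $I \colonequals \{t \in [\rho,r] \setvert \dg(X(t),B) = r - t\}$. The base step shows $\rho \in I$, and $I$ is closed by continuity of $t \mapsto \dg(X(t),B)$. For the relative openness of $I$ to the right, suppose $t_0 \in I$ with $t_0 < r$ and repeat the base step at $X(t_0)$ with radius $\rho' < \min\{r - t_0,\eps(X(t_0))\}$: this produces $q \in S_{\rho'}(X(t_0))$ with $\dg(q,B) = \dg(X(t_0),B) - \rho' = r - t_0 - \rho'$. Concatenating $X|_{[0,t_0]}$ (length $t_0$) with the minimizing geodesic from $X(t_0)$ to $q$ (length $\rho'$, part~ii)) gives a curve from $A$ to $q$ of length $t_0 + \rho'$; since $\dg(A,q) \geq \dg(A,B) - \dg(q,B) = t_0 + \rho'$, this curve is a length minimizer. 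Restricting it to a short interval $[t_0 - \delta, t_0 + \rho']$ straddling the join, it is a minimizer between two points in a small neighbourhood of $X(t_0)$, so by the local uniqueness statement in part~ii) it coincides, up to reparameterization, with a smooth geodesic and hence has no corner at $t_0$; uniqueness of solutions of the geodesic initial value problem (Proposition~\ref{prop:generalIVP}) then identifies this geodesic as the continuation of $X$, whence $q = X(t_0 + \rho')$. For $t \in [t_0,t_0+\rho']$ it follows that $\dg(X(t),B) \leq \dg(X(t),q) + \dg(q,B) \leq (t_0 + \rho' - t) + (r - t_0 - \rho') = r - t$, while $\dg(X(t),B) \geq \dg(A,B) - \dg(A,X(t)) \geq r - t$, so $[t_0,t_0+\rho'] \subseteq I$. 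Therefore $I = [\rho,r]$; in particular $\dg(X(r),B) = 0$, so $X(r) = B$, and $X|_{[0,r]} \in \adset{A}{B}$ has length $\int_0^r \fullnorm{M_0}\,\dt = r = \dg(A,B)$, i.e.\ it is a length minimizer.

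The step I expect to be the main obstacle is the openness of $I$: turning the broken minimizer (a geodesic arc followed by another geodesic arc) into a genuine smooth geodesic relies on the \emph{local} uniqueness of minimizing geodesics from Lemma~\ref{lemma:geodesicSpheres}~ii), which must be invoked with \emph{both} endpoints inside a small neighbourhood of $X(t_0)$ and therefore forces $\rho'$ (and the straddling interval) to be chosen small relative to $\eps(X(t_0))$. Everything else --- compactness of $S_\rho(A)$, continuity of $\dg$, the fact that admissible curves leaving $\vPhiA(B_\rho(0))$ cross $S_\rho(A)$, and the closedness of $I$ --- is routine.
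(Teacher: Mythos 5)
The structure of your argument --- a base step at $A$ showing $\rho\in I$, closedness of $I$, and an extension/``no-corner'' step giving openness --- is essentially the same as the paper's proof; the only cosmetic difference is that you run an open-and-closed argument on $I$ while the paper takes $\tilde t=\max I_A$ and derives a contradiction, which are equivalent. The substantive divergence, and the place where your proof has a genuine gap, is the no-corner step. You restrict the broken minimizer to $[t_0-\delta,t_0+\rho']$ and invoke the uniqueness clause of Lemma~\ref{lemma:geodesicSpheres}~ii) to conclude smoothness across the join. But ii) is a \emph{one-centered} statement: it asserts uniqueness of the minimizer from the \emph{center} $A$ of a geodesic ball $B_\eps(A)$ to a point of that ball. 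To apply it to a minimizer from $X(t_0-\delta)$ to $q$ you must take $X(t_0-\delta)$ as center, which requires $\delta+\rho'<\eps(X(t_0-\delta))$. Since $\eps(\cdot)$ is only produced pointwise in Lemma~\ref{lemma:geodesicSpheres}, with no continuity or uniform positive lower bound asserted, you cannot simply let $\delta\to 0$ and expect $\delta<\eps(X(t_0-\delta))$; the condition you state (``$\rho'$ small relative to $\eps(X(t_0))$'') is not quite the one that is needed, and the circularity remains unresolved. This can be repaired: by left-invariance ($\vPhiA=L_A\circ\vPhi$ with $L_A$ a diffeomorphism and $\dg$ left-invariant), $\eps(A)$ may in fact be taken independent of $A$, which is exactly the uniformity you need; alternatively one could establish the existence of totally normal neighbourhoods. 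But neither observation is made in the paper or in your proof. The paper sidesteps the entire issue: having noted that the concatenated curve $\hat X$ is a \emph{piecewise $C^2$ energy minimizer} (because $X_A$ and $X_{\tilde A}$ have the same constant speed, so the length minimizer $\hat X$ is an energy minimizer by Lemma~\ref{lemma:energyLengthMinimizerRelation}), it invokes Proposition~\ref{proposition:odeForPiecewiseSmoothEnergyMinimizers}, which already shows that any piecewise $C^2$ energy minimizer is globally $C^2$ and solves the geodesic ODE. Smoothness at the join then comes for free, with no appeal to local uniqueness of minimizers or to any uniformity of $\eps(\cdot)$, after which the uniqueness in Proposition~\ref{prop:generalIVP} yields $\hat X=X_A$ exactly as in your last step.
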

\begin{proof}
	Let $B\in\GLp(n)$ be fixed. For $A\in\GLp(n)$, we define $r_A\colonequals\dg(A,B)$ and construct a length minimizer $X_A\colon[0,r_A]\rightarrow \GLp(n)$ as follows:
	Choose $\eps(A)$ as in Lemma \ref{lemma:geodesicSpheres} and $\eps$ with $0<\eps<\eps(A)$, $\eps < \dg(A,B)$. Then the continuous function
	\begin{align}
		\gln\supset S_\eps(0)\rightarrow \R^+, \:N\mapsto\dg(\vPhiA(N),B)
	\end{align}
	attains a minimum at some $M_A\in S_\eps(0)$. Note that by this definition, $\vPhiA(M_A)$ is closest to $B$ among the geodesic sphere $S_\eps(A)$. Next we define $X_A\in C^\infty([0,r_A];\GLp(n))$ by
	\begin{align}
		X_A(t) \colonequals \vPhiA(t\,\eps^{-1}M_A)\,.
	\end{align}
	To simplify notation, denote by $\len_t(X_A) \colonequals \len(\eval{X_A}{[0,t]})$ the length of $X_A$ up to $t\in[0,r_A]$. Then equation \eqref{eq:geodesicLength} yields $\len_t(X_A) = t\,\eps^{-1}\fullnorm{M_A} = t$. Thus we find $\dg(X_A(t),A) \leq t$, and therefore
	\begin{align}
		\dg(X_A(t),B) &\geq \dg(A,B)-\dg(X_A(t),A) \geq \dg(A,B)-t\,.\label{eq:hopfRinowLowerBound}
	\end{align}
	We will denote by
	\begin{align}
		I_A\colonequals\{t\in[0,r_A] \setvert \dg(X_A(t),B) = r_A-t\}
	\end{align}
	the set of all $t$ where equality holds in \eqref{eq:hopfRinowLowerBound}. Geometrically, $I_A$ measures for how long $X_A$ runs \enquote{in an optimal direction towards $B$}. Since $\len(X_A) = \len_{r_A}(X_A) = r_A = \dg(A,B)$, $X_A$ is a length minimizer between $A$ and $B$ if $X_A(r_A) = B$, thus it remains to show that $r_A \in I_A$. We will first show that $\eps \in I_A$:
	
	Assume $\dg(X_A(\eps),B)>r_A-\eps$. Then $\dg(A,B)=r_A<\eps+\dg(X_A(\eps),B)$, hence we can find $Y\colon[a,b]\rightarrow\GLp(n)$, $Y\in\adsetAB$ with
	\begin{align}
		\len(Y)<\eps+\dg(X_A(\eps),B)=\eps+\dg(\vPhiA(M_A),B)\,.
	\end{align}
	Since $\dg(A,B)>\eps$ by definition of $\eps$, the curve $Y$ intersects $S_\eps(A)$, so there exist $s\in(a,b)$ and, according to Lemma \ref{lemma:geodesicSpheres}, $N\in S_\eps(0)\subset\gln$ with $Y(s)=\vPhiA(N)$. Since $\len(Y) = \len(\eval{Y}{[a,s]}) + \len(\eval{Y}{[s,b]})$, we obtain
	\begin{align}
		\dg(\vPhiA(N),B)&\leq \len(\eval{Y}{[s,b]}) = \len(Y) - \len(\eval{Y}{[a,s]}) \leq \len(Y) - \dg(Y(a),Y(s)) \nonumber \\
		&= \len(Y) - \dg(A,\vPhiA(N)) \;=\; \len(Y)-\eps < \dg(\vPhiA(M_A),B)\,,
	\end{align}
	in contradiction to the definition of $M_A$. Therefore $\eps\in I_A$, and in particular $I_A$ is nonempty.\\
	
\begin{figure}
	\begin{center}
		\includegraphics[width=1.0\textwidth]{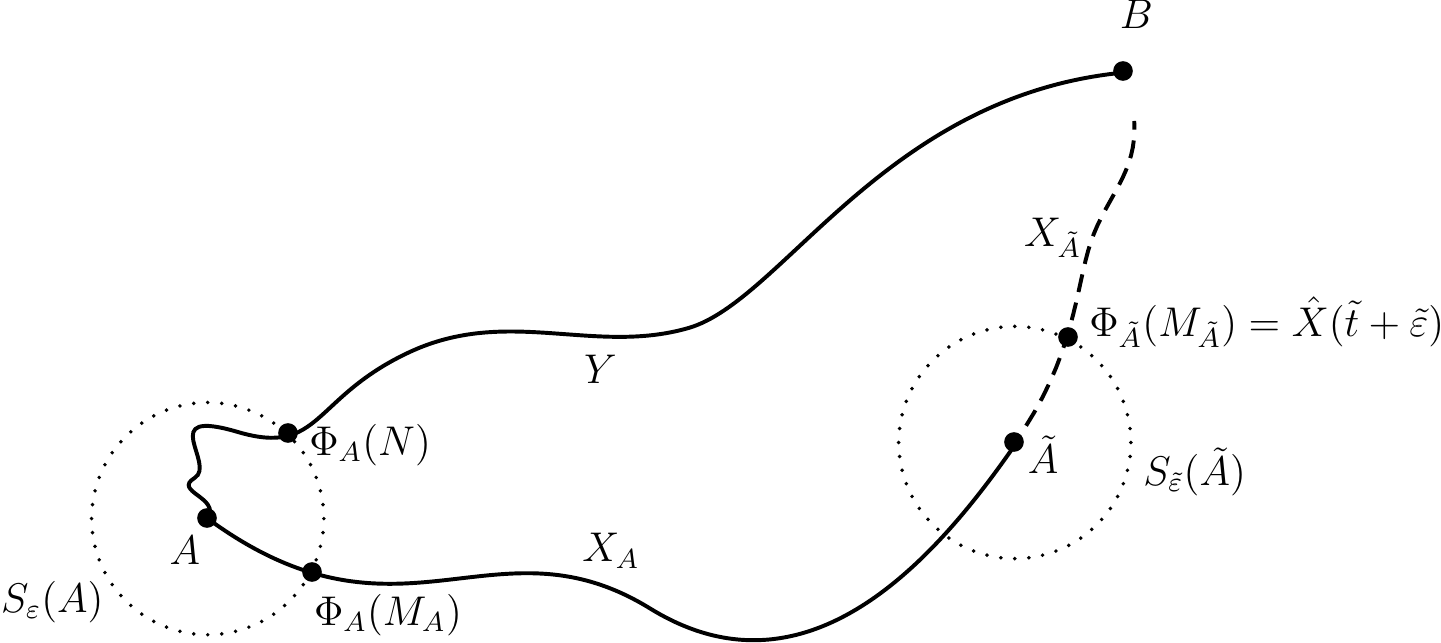}
		\caption{Global existence of length minimizers}
	\end{center}
\end{figure}
	
	Now assume $r_A \notin I_A$. It is not difficult to see that $I_A$ is closed, so let $\tilde{t} \colonequals \max I_A \neq r_A$ and, for $\tilde{A}\colonequals X_A(\tilde{t})$, choose $\tilde{\eps}<\eps(\tilde{A})$, $M_{\tilde{A}}$, $X_{\tilde{A}}$, $I_{\tilde{A}}$ accordingly. We find $\tilde{\eps}\in I_{\tilde{A}}$, and thus $\dg(X_{\tilde{A}}(\tilde\eps), B) = \dg(\tilde{A},B)-\tilde{\eps}$.\\
	Let $\hat{X}$ denote the piecewise smooth curve
	\begin{align}
		\hat X\colon [0,\tilde t + \tilde \eps] \rightarrow \GLp(n), \:\hat X(t) = 
			\begin{cases}
				X_A(t) &: t \leq \tilde t\\
				X_{\tilde A}(t-\tilde t) &: t > \tilde t
			\end{cases}\:\:
	\end{align}
	obtained by \enquote{attaching} $X_{\tilde A}$ to $X_A$. Then from
	\begin{align}
		\len(\hat X) &= \len(\left.\smash{\hat X}\vphantom{X}\right|_{[0, \tilde t]}) + \len(\left.\smash{\hat X}\vphantom{X}\right|_{[\tilde t, \tilde t + \tilde \eps]}) = \tilde t + \tilde \eps
	\end{align}
	and
	\begin{align*}
		\quad\,\dg(A, \hat X(\tilde t + \tilde \eps)) = \dg(A,X_{\tilde A}(\tilde\eps)) &\geq \dg(A,B) - \dg(X_{\tilde A}(\tilde \eps),B) \nonumber \\
		&\overset{\mathclap{\tilde\eps\in I_{\tilde A}}}{=} \dg(A,B) - \dg(\tilde A,B) + \tilde \eps \nonumber \\
		&= \dg(A,B) - \dg(X_A(\tilde t),B) + \tilde \eps \overset{\tilde t \in I_A}{=} \:\tilde t + \tilde \eps\,,
	\end{align*}
	we infer that $\hat X$ is a length minimizer between $A$ and $\hat X(\tilde t + \tilde\eps)$. Since $X_A$, $X_{\tilde A}$ have (the same) constant speed, $\hat X$ is a piecewise smooth energy minimizer in $\adset{A}{\hat X(\tilde t + \tilde\eps)}$. Then according to Proposition \ref{proposition:odeForPiecewiseSmoothEnergyMinimizers}, $\hat X$ is smooth and satisfies \eqref{eq:generalDifferentialEquation} everywhere. Finally, because $X_A$ satisfies the differential equation with the same initial values as $\hat X$, the uniqueness from Proposition \ref{prop:generalIVP} yields
	\begin{align}
		\hat X = X_A\text{ on }[0, \tilde t + \tilde \eps]\,,
	\end{align}
	and therefore
	\begin{align}
		\quad\:\dg(X_A(\tilde t + \tilde \eps),B) =\dg(X_{\tilde A}(\tilde\eps),B) &= \dg(\tilde A,B) - \tilde\eps \nonumber \\
		&= \dg(X_A(\tilde t),B) - \tilde\eps \nonumber \\
		&= \dg(A,B)-(\tilde t + \tilde \eps) \:\: \Rightarrow \:\: \tilde t + \tilde \eps \in I_A\,,
	\end{align}
	in contradiction to the choice of $\tilde t$. Therefore $r_A\in I_A$, and thus $\dg(X_A(r_A),B) = r_A-r_A=0$. Since $\dg$ is a metric on $\GLpn$ according to Corollary \ref{corollary:metricProperty}, we find $X(r_A)=B$. Since $\len(X_A)=r_A$, this concludes the proof.
\end{proof}

\subsection{Conclusion}
As above, let $g$ be a left-invariant, right-$\On$-invariant Riemannian metric on $\GLn$ given by
\begin{align}
	g_A(M,N) &= \fullproduct{A\inv M, \, A\inv N}\,, \nnl
	\fullproduct{M,N} &= \mu \innerproduct{\dev\sym M,\dev\sym N} + \mu_c \innerproduct{\skew M,\skew N} + \frac{\kappa}{n} (\tr M) (\tr N)\,, \nnl
	\innerproduct{M,N} &= \tr(M^T N) \nonumber
\end{align}
for $A\in\GLn$, $M,N\in\gln$. Combining the global existence of length minimizing geodesics (Proposition \ref{prop:hopfRinow}) with the fact that a length minimizer can be reparameterized into an energy minimizer $X\colon[0,1]\to\GLn$ (Lemmas \ref{lemma:constantSpeed} and \ref{lemma:energyLengthMinimizerRelation}) and the representation formula of energy minimizers from Theorem \ref{theorem:parameterizationOfGeodesics}, we obtain our main result.

\begin{theorem}
	\label{theorem:existenceAndParameterizationOfMinimizers}
	Let $A,B\in\GLpn$. Then there exists $M\in\gln$ such that the curve $X\colon [0,1] \to \GLpn$ with
	\begin{equation}
		X(t) = A \, \exp(t(\sym M - \omega\skew M)) \, \exp(t(1+\omega) \skew M)
	\end{equation}
	is a shortest curve connecting $A$ and $B$, i.e.\ $X(0)=A$, $X(1)=B$ and
	\begin{equation}
		\fullnorm{M} = \len(X) = \dg(A,B) = \infY \, \len(Y)\,.
	\end{equation}
\end{theorem}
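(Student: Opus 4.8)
The plan is to obtain the statement by assembling the four principal results established above: the global existence of a length minimizer (Proposition~\ref{prop:hopfRinow}), the fact that a length minimizer of constant speed is an energy minimizer (Lemmas~\ref{lemma:constantSpeed} and~\ref{lemma:energyLengthMinimizerRelation}), the characterization of (piecewise $C^2$) energy minimizers as solutions of the geodesic equation~\eqref{eq:generalDifferentialEquation} (Proposition~\ref{proposition:odeForPiecewiseSmoothEnergyMinimizers}), and the explicit parameterization of the solutions of the geodesic initial value problem~\eqref{eq:generalIVP} (Theorem~\ref{theorem:parameterizationOfGeodesics}).

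First I would invoke Proposition~\ref{prop:hopfRinow} to obtain a length minimizing curve connecting $A$ and $B$; inspecting its proof, the curve $X_A\colon[0,r_A]\to\GLpn$ constructed there is in fact smooth, has constant speed $1$, and satisfies $\len(X_A)=r_A=\dg(A,B)$. Rescaling the parameter interval, I set $X(t)\colonequals X_A(r_A\,t)$ for $t\in[0,1]$; by the invariance of the length under reparameterization (Lemma~\ref{lemma:lengthInvariance}) this is still a length minimizer in $\adsetAB([0,1])$, now of constant speed $r_A$, and hence by Lemma~\ref{lemma:energyLengthMinimizerRelation} also an energy minimizer on $[0,1]$.

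Since $X$ is smooth it is in particular a piecewise twice differentiable energy minimizer, so Proposition~\ref{proposition:odeForPiecewiseSmoothEnergyMinimizers} applies and $\tangent=X\inv\Xdot$ solves the geodesic equation~\eqref{eq:generalDifferentialEquation}. Putting $M\colonequals A\inv\Xdot(0)\in\gln$, the curve $X$ then solves the initial value problem~\eqref{eq:generalIVP} with $X_0=A$ and $M_0=\Xdot(0)=AM$, so the uniqueness asserted in Proposition~\ref{prop:generalIVP} together with Theorem~\ref{theorem:parameterizationOfGeodesics} (note that $A\inv M_0=M$) forces
\[
	X(t)=A\,\exp(t(\sym M-\omega\skew M))\,\exp(t(1+\omega)\skew M)\,.
\]
In particular $X(0)=A$ and $X(1)=X_A(r_A)=B$. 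Finally, evaluating the length formula~\eqref{eq:geodesicLength} on the interval $[0,1]$ (i.e.\ with $t_0=1$) gives $\len(X)=\fullnorm{M}$, and combining this with $\len(X)=\len(X_A)=r_A=\dg(A,B)$ and the definition of the geodesic distance as an infimum of lengths yields the full chain $\fullnorm{M}=\len(X)=\dg(A,B)=\infY\len(Y)$.

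Since all the substantive work has already been done, I do not expect a genuine obstacle; the only point requiring care will be the bookkeeping of the parameter interval, as Proposition~\ref{prop:hopfRinow} delivers the minimizer on $[0,r_A]$ while the claimed representation lives on $[0,1]$, so one must rescale and track how $M$ (and hence the length formula) transforms. It is also worth recording explicitly that the minimizer produced by Proposition~\ref{prop:hopfRinow} is automatically smooth, which makes the regularity hypothesis of Proposition~\ref{proposition:odeForPiecewiseSmoothEnergyMinimizers} trivial; indeed $X_A$ is already of the form~\eqref{eq:definitionGeodesic}, namely $X_A(t)=\vPhiA(t\,\eps^{-1}M_A)$, so after rescaling one may even read off the conclusion directly. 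For a general length minimizer not arising from this construction, the reduction to an energy minimizer via Lemmas~\ref{lemma:constantSpeed} and~\ref{lemma:energyLengthMinimizerRelation} (as in Remark~\ref{remark:lengthMinimizersAreGeodesics}) is the route to take.
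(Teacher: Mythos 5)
Your proposal is correct and follows essentially the same route as the paper, which simply combines Proposition~\ref{prop:hopfRinow}, Lemmas~\ref{lemma:constantSpeed} and~\ref{lemma:energyLengthMinimizerRelation}, and Theorem~\ref{theorem:parameterizationOfGeodesics} in a single sentence. Your version merely spells out the rescaling to $[0,1]$ and the final appeal to Proposition~\ref{prop:generalIVP} and equation~\eqref{eq:geodesicLength}, which is a harmless elaboration of the same argument.
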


\begin{corollary}\label{cor:analyticalDistanceExpression}
	The geodesic distance between $A$ and $B$ is given by
	\begin{align}
		\dg(A,B) = \min\{&\fullnorm{M} \setvert M\in\gln \,, \label{eq:geodesicDistanceFormula}\\ &\: A \exp(\sym M - \omega\skew M) \, \exp((1+\omega) \skew M) = B \}\,. \nonumber
	\end{align}
	In particular, the set is non-empty for all $A,B\in\GLpn$.
\end{corollary}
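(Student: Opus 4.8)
The statement is essentially a repackaging of Theorem~\ref{theorem:existenceAndParameterizationOfMinimizers} together with the length formula \eqref{eq:geodesicLength}, so the plan is short. Write
\[
	\mathcal{M}_{A,B} \colonequals \{\, M\in\gln \setvert A\exp(\sym M - \omega\skew M)\,\exp((1+\omega)\skew M) = B \,\}
\]
for the constraint set appearing in \eqref{eq:geodesicDistanceFormula}. First I would invoke Theorem~\ref{theorem:existenceAndParameterizationOfMinimizers} evaluated at $t=1$: it produces a specific $M_0\in\gln$ with $M_0\in\mathcal{M}_{A,B}$ and $\fullnorm{M_0} = \dg(A,B)$. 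This already shows that $\mathcal{M}_{A,B}\neq\emptyset$ (which settles the final assertion of the corollary) and that $\inf\{\fullnorm{M} \setvert M\in\mathcal{M}_{A,B}\} \leq \dg(A,B)$.

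For the reverse inequality I would take an arbitrary $M\in\mathcal{M}_{A,B}$ and look at the geodesic curve from Theorem~\ref{theorem:parameterizationOfGeodesics},
\[
	X(t) = \vPhiA(tM) = A\exp(t(\sym M - \omega\skew M))\,\exp(t(1+\omega)\skew M)\,.
\]
By construction $X(0)=A$, and $X(1)=B$ precisely because $M$ lies in $\mathcal{M}_{A,B}$; moreover $X$ is smooth, and for $M\neq 0$ it is regular (the case $M=0$ forces $A=B$ and the claimed identity is trivial), so $X\in\adset{A}{B}$. Equation \eqref{eq:geodesicLength} gives $\len(X)=\fullnorm{M}$, hence $\dg(A,B)\leq\len(X)=\fullnorm{M}$; taking the infimum over $M\in\mathcal{M}_{A,B}$ yields $\dg(A,B)\leq\inf\{\fullnorm{M}\setvert M\in\mathcal{M}_{A,B}\}$.

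Combining the two inequalities gives $\inf\{\fullnorm{M}\setvert M\in\mathcal{M}_{A,B}\}=\dg(A,B)$, and since $M_0$ attains this value the infimum is in fact a minimum, which is exactly \eqref{eq:geodesicDistanceFormula}. There is no genuine obstacle here beyond bookkeeping: the only points that deserve a word of care are that the curves $\vPhiA(tM)$ are honestly admissible (regularity, handled by the $M\neq0$ versus $M=0$ dichotomy) and that the extremum is attained rather than merely approached --- both immediate, the latter from the explicit minimizer $M_0$. All the analytic substance, namely the existence of a length minimizer and its exponential parameterization, has already been carried out in Theorems~\ref{theorem:parameterizationOfGeodesics} and \ref{theorem:existenceAndParameterizationOfMinimizers}.
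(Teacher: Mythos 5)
Your argument is correct and is precisely the deduction the paper leaves implicit: Theorem~\ref{theorem:existenceAndParameterizationOfMinimizers} supplies an $M_0$ in the constraint set with $\fullnorm{M_0}=\dg(A,B)$, while equation \eqref{eq:geodesicLength} shows every $M$ in the set yields an admissible curve $\vPhiA(tM)$ of length $\fullnorm{M}$, so $\dg(A,B)\le\fullnorm{M}$. Your care with the $M=0$ case (needed because $\adm^1$ requires regular curves) is a reasonable precaution the paper glosses over, but it does not change the substance.
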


\section{Special cases and applications}
For given $A,B\in\GLpn$ it is still quite difficult to compute the geodesic distance $\dg(A,B)$ using formula \eqref{eq:geodesicDistanceFormula}: There is no known closed form solution for finding an $M\in\gln$ with
\begin{equation}
	\exp(\sym M - \omega\skew M) \, \exp((1+\omega) \skew M) = A\inv B\,,
\end{equation}
let alone one that is minimal with regard to the norm $\fullnorm{\,.\,}$. We will therefore consider some easier special cases.%

\subsection{The geodesic distance on $\GLp(1)$}
In the one dimensional case, we can identify $\GLp(1)$ with $\R^+$ and the matrix multiplication with the usual multiplication on $\R$. The most general inner product on $\R\cong\gl(1)$ is given by
\begin{equation}
	\innerproduct{x,y}_\kappa = \kappa\,x\,y,\quad \kappa > 0\,,
\end{equation}
with the corresponding left-invariant Riemannian metric
\begin{equation}
	g_{\realPointOne}(x,y) = \innerproduct{p^{-1}x,p^{-1}y}_\kappa = \Biginnerproduct{\frac{x}{p},\frac{y}{p}}_\kappa = \kappa\, \frac{x\,y}{p^2}\,.
\end{equation}
The length and energy of a piecewise differentiable curve $X\in C^0([0,1];\R^+)$ are therefore
\begin{align}
	\len(X) &= \int\nolimits_0^1 \sqrt{g_{X(t)}(\Xdot(t),\Xdot(t))} \dt	\,= \int\nolimits_0^1 \sqrt{\frac{\Xdot(t) \cdot \Xdot(t)}{X(t)^2}} \dt = \kappa \int\nolimits_0^1 \dynabs{\frac{\Xdot(t)}{X(t)}} \dt\,, \nonumber \\
	\ener(X) &= \int\nolimits_0^1 g_{X(t)}(\Xdot(t),\Xdot(t)) \dt	\,= \kappa^2 \int\nolimits_0^1 \left(\frac{\Xdot(t)}{X(t)}\right)^2 \dt\,.
\end{align}
It is easy to see that, in order to minimize the length over all curves connecting $p,q\in\R^+$, we can assume that $X$ is strictly monotone. In this case, $X$ is uniquely determined by $p$ and $q$ up to a reparameterization.\\
We recall from Lemma \ref{lemma:energyLengthMinimizerRelation} that a curve $X\in\adset{p}{q}$ is an energy minimizer if and only if it is a length minimizer of constant speed. Since the length is invariant under reparameterization (Lemma \ref{lemma:lengthInvariance}), $\len$ is constant on the set of strictly monotone curves $X \in \adset{p}{q}$, and therefore any $X$ with
\begin{equation}
	\label{eq:energyMinimizersOnGLp1}
	g_{X}(\Xdot,\Xdot) \equiv \text{constant},\quad X(0) = p,\: X(1) = q
\end{equation}
is an energy minimizer. To solve \eqref{eq:energyMinimizersOnGLp1}, we define
\begin{equation}
	\label{eq:energyMinimizerParameterizationOnGLp1}
	X\colon [0,1] \mapsto \GLp(1), \quad X(t) = p \cdot \exp(t\ln(\tfrac{q}{p}))\,,
\end{equation}
and check
\begin{align}
	g_{X(t)}(\Xdot(t),\Xdot(t)) = \kappa\frac{\Xdot(t)^2}{X(t)^2} = \kappa\frac{p^2 \exp(t\ln(\tfrac{q}{p}))^2 \, \ln(\tfrac{q}{p})^2}{p^2 \exp(t\ln(\tfrac{q}{p}))^2} \equiv \kappa(\ln(\tfrac{q}{p}))^2\,,
\end{align}
as well as
\begin{align}
	X(0) = p\,,\quad X(1) = p\, \frac{q}{p} = q\,.
\end{align}
Thus $X$ is an energy minimizer, and its energy and length are given by
\begin{align}
	\underset{Y\in\adset{p}{q}}{\min} \ener(Y) &= \ener(X) = \kappa^2 \int\nolimits_0^1 \left(\frac{\Xdot(t)}{X(t)}\right)^2 \dt = \kappa^2\int\nolimits_0^1 \left(\ln(\tfrac{q}{p})\right)^2 \dt = \kappa^2\abs{\ln(\smash{\tfrac{q}{p}})}^2 = \kappa^2(\ln(q) - \ln(p))^2 \nonumber
\intertext{and}
	\underset{Y\in\adset{p}{q}}{\min} \len(Y) &= \len(X) = \kappa \int\nolimits_0^1 \dynabs{\frac{\Xdot(t)}{X(t)}} \dt = \kappa\int\nolimits_0^1 \abs{\ln(\tfrac{q}{p})} \dt = \kappa\,\abs{\ln(\smash{\tfrac{q}{p}})} = \kappa\,\abs{\ln(q) - \ln(p)}\,.
\end{align}
We conclude:
\begin{proposition}
The geodesic distance between $p, q \in \GLp(1) \cong \R^+$ is
\begin{align}
	\dg(p,q) = \kappa\,\abs{\ln(\smash{\tfrac{q}{p}})} = \kappa\,\abs{\ln(q) - \ln(p)}\,,
\end{align}
and a shortest geodesic connecting $p$ and $q$ is given by
\begin{align}
	X\colon [0,1] \to \GLp(1), \quad X(t) = p \cdot \exp(t\ln(\tfrac{q}{p}))\,.
\end{align}
\end{proposition}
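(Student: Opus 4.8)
The plan is to sandwich $\dg(p,q)$ between two matching bounds and to read off a shortest geodesic along the way; unlike the higher-dimensional situation treated earlier, no appeal to the existence theory of Section \ref{section:existenceOfMinimizers} will be needed.

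First I would prove the lower bound $\dg(p,q)\ge\kappa\,\abs{\ln(q/p)}$. Let $X\in\adset{p}{q}$ be an arbitrary admissible curve, say $X\colon[a,b]\to\GLp(1)\cong\R^+$. Because $X(t)>0$ and $X$ is piecewise continuously differentiable, the scalar function $t\mapsto\ln X(t)$ is continuous on $[a,b]$ and piecewise $C^1$ with derivative $\Xdot(t)/X(t)$, so applying the fundamental theorem of calculus on each subinterval of differentiability and telescoping across the finitely many breakpoints (at which $\ln X$ is continuous) yields $\int_a^b \Xdot(t)/X(t)\,\dt=\ln X(b)-\ln X(a)=\ln(q/p)$. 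Inserting the explicit metric on $\GLp(1)$ into the length functional then gives
\[
	\len(X)=\kappa\int_a^b \dynabs{\frac{\Xdot(t)}{X(t)}}\,\dt \;\ge\; \kappa\,\dynabs{\int_a^b \frac{\Xdot(t)}{X(t)}\,\dt} \;=\; \kappa\,\abs{\ln(q/p)}\,,
\]
and passing to the infimum over all such $X$ establishes the bound. The fact that $X$ is defined on an arbitrary interval $[a,b]$ causes no trouble: one argues directly on $[a,b]$ as above, or reparametrizes to $[0,1]$ using the reparametrization invariance of the length (Lemma \ref{lemma:lengthInvariance}).

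Next I would produce the matching upper bound together with the minimizer. If $p=q$ the assertion is trivial, so assume $p\neq q$ and take $X\colon[0,1]\to\GLp(1)$ with $X(t)=p\exp(t\ln(q/p))$. A direct computation shows that $X$ is smooth and strictly monotone (hence $\Xdot(t)\neq0$), with $X(0)=p$ and $X(1)=q$, so $X\in\adset{p}{q}$; and since $\Xdot(t)/X(t)\equiv\ln(q/p)$ is constant, $X$ has constant speed and the length integral evaluates to $\len(X)=\kappa\,\abs{\ln(q/p)}$. Hence $\dg(p,q)\le\len(X)=\kappa\,\abs{\ln(q/p)}$, which together with the lower bound forces equality and exhibits $X$ as a shortest curve connecting $p$ and $q$. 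As a sanity check one notes that, having constant speed, $X$ is an energy minimizer by Lemma \ref{lemma:energyLengthMinimizerRelation}, and that it is precisely the geodesic of Theorem \ref{theorem:parameterizationOfGeodesics} in the degenerate scalar case $\skew M=0$, $\sym M=M$.

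I expect no genuine obstacle here: in dimension one all tangent directions are proportional, so the geometric principle that detours lengthen a curve collapses to the elementary inequality $\int\abs{f}\ge\abs{\int f}$. The only technical points worth a sentence are the piecewise application of the fundamental theorem of calculus at the breakpoints of a curve in $\adm^1$ and the treatment of curves on an arbitrary parameter interval, both of which are entirely routine.
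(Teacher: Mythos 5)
Your proof is correct, and it takes a slightly different but more self-contained route than the paper. The paper argues by reduction: it first asserts (without proof, as an ``it is easy to see'' step) that a length minimizer in $\adset{p}{q}$ may be taken strictly monotone, then observes that all strictly monotone curves from $p$ to $q$ are reparametrizations of one another and hence have identical length by Lemma~\ref{lemma:lengthInvariance}, and finally singles out the constant-speed representative $X(t)=p\exp(t\ln(q/p))$ and computes its length. Your argument replaces the informal monotonicity reduction with the direct lower bound $\len(X)=\kappa\int_a^b\abs{\Xdot/X}\,\dt\ge\kappa\,\dynabs{\int_a^b\Xdot/X\,\dt}=\kappa\abs{\ln(q/p)}$, valid for \emph{every} admissible curve, and then exhibits the same explicit curve as attaining the bound. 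This buys you a fully rigorous proof that avoids the paper's hand-waved reduction — indeed, your inequality $\int\abs{f}\ge\abs{\int f}$ is precisely what makes ``we may assume $X$ monotone'' true — at the modest cost of not directly recording (as the paper does implicitly) that the minimizer is unique up to reparametrization. Both proofs silently gloss over the degenerate case $p=q$, where the candidate curve is constant and hence not regular in the sense of Definition~\ref{def:admissibleSets}; you at least flag it as trivial, which is the right call since $\dg(p,p)=0$ holds by definition. One inherited blemish: the paper's displayed formulas for this subsection pick up a spurious factor of $\kappa$ (one would expect $\sqrt{\kappa}$ in the length and $\kappa$ in the energy, not $\kappa$ and $\kappa^2$); you have carried the same convention, so your bound matches the stated proposition, but it is worth noting that both the paper's and your formula disagree with $\sqrt{g_{X}(\Xdot,\Xdot)}=\sqrt{\kappa}\,\abs{\Xdot/X}$.
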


\subsection{Normal matrices}
The following lemma states some properties of normal matrices and their relation to the matrix exponential.
A matrix $M\in\gln$ is called normal if $MM^T = M^TM$.

\begin{lemma}
\label{lemma:normalExponential}
	Let $A\in\GLpn$, $M\in\gln$. Then:\\
	\begin{tabular}{ll}
		i) &$M$ is normal if and only if $\sym M$ and $\skew M$ commute. \\
		ii) &If $M$ is normal, then $\exp(M)$ is normal. \\
		iii) &If $A$ is normal, then there exists a normal matrix $N\in\gln$ with $\exp(N)=A$.\\
	\end{tabular}
\end{lemma}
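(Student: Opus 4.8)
Parts i) and ii) are quick, so I would settle them first. For i), write $S\colonequals\sym M$ and $W\colonequals\skew M$, so that $M=S+W$ and $M^T=S-W$; expanding the products gives $MM^T=S^2+WS-SW-W^2$ and $M^TM=S^2+SW-WS-W^2$, hence $MM^T-M^TM=2(WS-SW)$, and therefore $MM^T=M^TM$ holds precisely when $W$ and $S$ commute. For ii), note that normality of $M$ says exactly that $M$ and $M^T$ commute; by the commutation property of the exponential (\eqref{eq:expCommutation}, Appendix \ref{section:expAppendix}) the matrices $\exp(M)$ and $\exp(M^T)=\exp(M)^T$ then commute as well, i.e.\ $\exp(M)$ is normal. (Equivalently, via i) one writes $\exp(M)=\exp(S)\exp(W)$ and checks $\exp(M)\exp(M)^T=\exp(M)^T\exp(M)=\exp(2S)$, using $\exp(W)\exp(W)^T=\id$ and the commutation of $\exp(S)$ with $\exp(W)$.)

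For iii) the plan is to pass to the real block-diagonal normal form of $A$ and build a logarithm block by block. Since $A$ is real and normal there is $Q\in\On$ with $Q^TAQ$ block diagonal, each diagonal block being either a $1\times1$ real eigenvalue or a $2\times2$ scaled-rotation block carrying a conjugate pair of complex eigenvalues. I would exhibit each block as $\exp$ of a \emph{normal} block: a positive scalar $\lambda$ as $\exp(\ln\lambda)$; a $2\times2$ scaled rotation of modulus $r>0$ and angle $\theta$ as $\exp$ of $\ln r$ times the $2\times2$ identity plus $\theta$ times the standard skew-symmetric generator --- a matrix whose symmetric and skew-symmetric parts commute, hence normal by i), and whose rotational factor is orthogonal by \eqref{eq:expsoSO}. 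Assembling these block logarithms into a block-diagonal matrix $\widetilde N$ produces a normal matrix, since a block-diagonal matrix is normal exactly when each of its blocks is; then $N\colonequals Q\widetilde N Q^T$ is normal, as an orthogonal conjugate of a normal matrix, and $\exp(N)=Q\exp(\widetilde N)Q^T=A$.

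The one genuinely delicate ingredient of iii) is the presence of \emph{negative} real eigenvalues, which have no real $1\times1$ logarithm. Here I would use $\det A>0$, which forces an even number of negative real eigenvalues, and attempt to group them into $2\times2$ blocks: a block equal to $-\lambda$ times the identity is $\exp$ of $\ln\lambda$ times the identity plus $\pi$ times the skew generator, again normal by i). I expect the real work to lie in making this grouping go through in full generality --- two \emph{distinct} negative eigenvalues a priori occupy a diagonal $2\times2$ block that is not visibly a normal exponential --- so this is the step I would scrutinize most carefully; the remainder of the lemma is the elementary computation above. (In the case most relevant to the elasticity applications, namely $A\in\PSymn$, no negative eigenvalues arise and one may simply take $N=\log A\in\Symn$, which is normal.)
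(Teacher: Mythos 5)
Your proofs of i) and ii) are correct and essentially identical to the paper's own (direct computation for i); for ii) the paper invokes the commutation property of $\exp$ in the same way you do). The paper does not prove iii) itself but cites \cite[Proposition 11.2.8]{Bernstein2009}, so yours is a genuinely different, self-contained route.

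For iii), the step you flag --- two \emph{distinct} negative real eigenvalues --- is not merely delicate but fatal: the assertion is false as stated. Take $A$ to be the $2\times 2$ diagonal matrix with entries $-1$ and $-2$. Then $A$ is normal and $\det A=2>0$, so $A\in\GLp(2)$. Suppose some normal $N\in\R^{2\times2}$ satisfied $\exp(N)=A$. By the real spectral theorem for normal matrices, $N=Q\widetilde N Q^T$ with $Q$ orthogonal and $\widetilde N$ block diagonal, the blocks being $1\times 1$ real entries $a_j$ or $2\times 2$ scaled rotations $\alpha_k\id+\beta_k J$ (with $J$ the standard $2\times2$ skew generator). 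Hence $\exp(N)=Q\exp(\widetilde N)Q^T$ has $1\times1$ blocks $e^{a_j}>0$ and $2\times2$ blocks $e^{\alpha_k}(\cos\beta_k\,\id+\sin\beta_k\,J)$ with eigenvalues $e^{\alpha_k}e^{\pm i\beta_k}$. A negative real eigenvalue of $\exp(N)$ can therefore only come from a $2\times2$ block with $\beta_k$ an odd multiple of $\pi$, and then it equals $-e^{\alpha_k}$ \emph{with multiplicity two}. Consequently every negative eigenvalue of the exponential of a real normal matrix has even multiplicity, whereas $A$ has the two simple negative eigenvalues $-1$ and $-2$ --- a contradiction. (By Culver's criterion this $A$ in fact has no real logarithm at all, normal or otherwise.) So neither your block grouping nor any other construction can produce the claimed $N$, and your suspicion that the "real work" lies there is vindicated: the lemma would need an additional hypothesis ruling out negative eigenvalues of odd multiplicity, or a passage to complex matrices. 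The error is harmless for the paper's purposes, since part iii) is only ever applied to $A=F^TF\in\PSymn$ (see \eqref{eq:localDistanceIdToU} and Proposition~\ref{prop:distanceCloseToIdentity}), where the symmetric principal logarithm supplies the required normal preimage, exactly as you note at the end of your proposal.
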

\begin{proof}
	i) can be shown by direct computation:
	\begin{alignat}{3}
		&&(\sym M)(\skew M) &= (\skew M)(\sym M) \nonumber \\
		& \Leftrightarrow & \half(M + M^T) \cdot \half(M - M^T) &= \half(M - M^T) \cdot \half(M + M^T) \nonumber \\
		& \Leftrightarrow & \quad \tel4(M^2 + M^TM - MM^T - (M^T)^2) &= \tel4(M^2 - M^TM + MM^T - (M^T)^2) \nonumber \\
		& \Leftrightarrow & M^TM - MM^T &= -M^TM + MM^T \nonumber \\
		& \Leftrightarrow & M^TM &= MM^T\,.
	\end{alignat}
	Furthermore, if $M$ and $M^T$ commute, then according to \eqref{eq:expHomomorphism}, $\exp(M)$ and $\exp(M^T) = \exp(M)^T$ commute as well, proving ii).
	A proof of iii) can be found in \cite[Proposition 11.2.8]{Bernstein2009}.
\end{proof}

\subsubsection{Geodesics with normal initial tangents}
Let $N \in \gln$ be a normal matrix. Then, according to Theorem \ref{theorem:parameterizationOfGeodesics}, the geodesic curve $X\colon [0,1] \to \GLn$ with
\begin{align}
	X(t) = \vPhi(tN) = \exp(t(\sym N - \omega\skew N)) \: \exp(t(1+\omega) \skew N)
\end{align}
is the unique solution to the geodesic equation
\begin{align}
	\left\{\begin{aligned}
			\tangent &= X^{-1}\dot X\,,\\
			\dot \tangent &= \frac{1+\frac{\mu_c}{\mu}}{2}(\tangent^T\tangent - \tangent\tangent^T)
		\end{aligned}
	\right.
\end{align}
with the initial values
\begin{align}
	X(0) = \id\,, \quad \Xdot(t) = N\,.
\end{align}
According to Lemma \ref{lemma:normalExponential}, $\sym N$ and $\skew N$ commute if $N$ is normal. But then $\lambda_1 \sym N$ and $\lambda_2 \skew N$ commute as well for all $\lambda_1, \lambda_2 \in \R$, and thus $\exp(\lambda_1 \sym N + \lambda_2 \skew N) = \exp(\lambda_1 \sym N) \exp(\lambda_2 \skew N)$ according to \eqref{eq:expCommutation}. This allows us to simplify $X$: we find
\begin{align}
	X(t) &= \exp(t\,\sym N - t\,\omega \skew N) \:\exp(t(1+\omega) \skew N) \nonumber \\
	&= \exp(t\,\sym N) \:\exp(- t\,\omega \skew N) \:\exp(t(1+\omega) \skew N) \nonumber \\
	&= \exp(t\,\sym N) \:\exp(- t\,\omega \skew N + t(1+\omega) \skew N) \nonumber \\
	&= \exp(t\,\sym N) \:\exp(t \skew N) = \exp(t\,\sym N + t \skew N) \;=\; \exp(tN)\,. \label{eq:normalTangentParameterization}
\end{align}
This representation of geodesics with normal initial tangents can also be found in \cite[Section 8.5.1]{Taubes2011}. The length of $X$ is
\begin{align}
	\len(X) &= \int\nolimits_0^1 \sqrt{\fullproduct{X^{-1}\Xdot, X^{-1}\Xdot}} \:\dt \nonumber \\
	&= \int\nolimits_0^1 \sqrt{\fullproduct{\exp(tN)^{-1}\,\exp(tN)\,N \,,\, \exp(tN)^{-1}\,\exp(tN)\,N}} \:\dt \nonumber \\
	&= \int\nolimits_0^1 \sqrt{\fullproduct{N,N}} \:\dt = \fullnorm{N} = \sqrt{\mu\norm{\dev\sym N}^2 + \mu_c\norm{\skew N}^2 + \frac{\kappa}{n}\tr(N)^2}\,, \label{eq:lengthOfNormalTangentGeodesic}
\end{align}
and $X(0) = \id$, $X(1) = \exp(N)$. In particular, $X(1)$ is normal according to Lemma \ref{lemma:normalExponential} as it is the exponential of a normal matrix.

\subsubsection{Geodesics connecting $\id$ with a normal matrix}

Now let $A\in\GLpn$ be normal. To find a geodesic $X$ connecting $\id$ and $A$, we need to find $M\in\gln$ solving
\begin{align}
	A = \exp(\sym M - \omega \skew M) \exp((1+\omega)\skew M) = X(1)\,.
\end{align}
But, again due to Lemma \ref{lemma:normalExponential}, a normal matrix $A$ has a (generally not uniquely determined) \emph{normal logarithm}, i.e.\ there exists a normal matrix $\Log A \in \gln$ such that $\exp(\Log A) = A$. According to \eqref{eq:normalTangentParameterization}, with $N=\Log A$, the geodesic $X$ with initial tangent $\Log A$ has the form
\[
	X(t) = \exp(t(\sym \Log A - \omega \skew \Log A)) \:\exp(t(1+\omega)\skew \Log A) = \exp(t\Log A)\,,
\]
and hence
\begin{equation*}
	X(1) = \exp(\Log A) = A\,.
\end{equation*}
Thus for normal matrices $A$, the curve $X\col[0,1]\to\GLpn$ with $X(t)=\exp(t\Log A)$ is a geodesic connecting $\id$ and $A$, and \eqref{eq:lengthOfNormalTangentGeodesic} yields $\len(X) = \fullnorm{\Log A}$. We therefore obtain the upper bound
\begin{equation}
	\dg(\id,A) \le \len(X) = \fullnorm{\Log A}
\end{equation}
for the distance of a normal matrix $A\in\GLpn$ to the identity $\id$.

Note carefully that this does not immediately imply $\dg(\id,A) = \fullnorm{\Log A}$ : While $\dg(\id,A)$ is indeed the length of \emph{a} geodesic curve connecting $\id$ and $A$, such a geodesic is generally not uniquely determined, and it is therefore possible that $X$ is not the \emph{shortest} such geodesic. However, as shown in Lemma \ref{lemma:geodesicSpheres}, geodesic curves are \emph{locally} unique, which immediately implies the following proposition.
\begin{proposition}\label{prop:distanceCloseToIdentity}
	Let $\mu,\mu_c,\kappa>0$. Then there exists $\eps>0$ such that for every normal $A\in\GLpn$ with $\norm{A-\id}<\eps$, the geodesic distance between $\id$ and $A$ is given by
	\[
		\dg(\id,A) = \min \fullnorm{\Log A} \colonequals \min \{\fullnorm{M} \setvert \exp(M)=A\}\,.
	\]
\end{proposition}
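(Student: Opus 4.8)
The plan is to combine the local uniqueness of length minimizers from Lemma \ref{lemma:geodesicSpheres} with the explicit geodesic connecting $\id$ to a normal matrix $A$ constructed above, namely $X(t) = \exp(t\Log A)$ with $\len(X) = \fullnorm{\Log A}$ for any normal logarithm $\Log A$ of $A$. The key point is that this already gives the upper bound $\dg(\id,A) \le \min\{\fullnorm{M} \setvert \exp(M)=A\}$, so only the reverse inequality requires an argument, and that is exactly where the smallness hypothesis $\norm{A-\id}<\eps$ enters.

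First I would fix $\eps_0 \colonequals \eps(\id) > 0$ as in Lemma \ref{lemma:geodesicSpheres} applied at $A=\id$, and then choose $\eps \in (0, \eps_0)$ small enough that $\norm{A-\id} < \eps$ forces $A \in B_{\eps_0/2}(\id)$ in the geodesic-distance sense; concretely, since $\vPhi = \vPhi_\id$ is a diffeomorphism from $B_{\eps_0}(0)$ onto the geodesic ball $B_{\eps_0}(\id)$ by Lemma \ref{lemma:geodesicSpheres} i), and since $D\vPhi[0]$ is nonsingular, there is a Euclidean neighbourhood of $\id$ contained in $\vPhi(B_{\eps_0/2}(0))$; take $\eps$ so that $\norm{A-\id}<\eps$ puts $A$ inside that neighbourhood. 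For such $A$, part ii) of Lemma \ref{lemma:geodesicSpheres} gives a length minimizer connecting $\id$ and $A$ which is unique up to reparameterization, and part iii) together with the preparations preceding Lemma \ref{lemma:geodesicSpheres} identifies it: writing $A = \vPhi(M^\ast)$ with $M^\ast = \vPhi^{-1}(A) \in B_{\eps_0/2}(0)$, the unique minimizer is $t \mapsto \vPhi(tM^\ast)$ and $\dg(\id,A) = \fullnorm{M^\ast}$.

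It then remains to show $\fullnorm{M^\ast} \le \fullnorm{M}$ for \emph{every} $M\in\gln$ with $\exp(M)=A$, and in particular $\dg(\id,A) = \min\{\fullnorm{M} \setvert \exp(M)=A\}$ with the minimum attained (at $M^\ast$, which is itself a normal logarithm since $\vPhi^{-1}(A)$ for a normal $A$ is normal — this follows because $A$ normal admits a normal $\Log A$, for which $\vPhi(\Log A)=\exp(\Log A)=A=\vPhi(M^\ast)$, and by injectivity of $\vPhi$ on $B_{\eps_0}(0)$, $M^\ast=\Log A$ whenever $\fullnorm{\Log A}<\eps_0$). For an arbitrary $M$ with $\exp(M)=A$: if $M$ is normal, then the curve $t\mapsto\exp(tM)$ connects $\id$ and $A$ with length $\fullnorm{M}$ by \eqref{eq:lengthOfNormalTangentGeodesic}, hence $\fullnorm{M}\ge\dg(\id,A)=\fullnorm{M^\ast}$; if $M$ is not normal, one still has $\vPhi(tM')$-type geodesics via the parametrization \eqref{eq:definitionGeodesic}, but more simply, $t\mapsto\exp(tM)$ need not be a $g$-geodesic, so instead observe that it is still an admissible curve connecting $\id$ to $A$, and estimating its $g$-length from below is awkward. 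The cleaner route — and the one I would take — is: for any $M$ with $\exp(M)=A$, note $\fullnorm{M}\ge\fullnorm{M^\ast}$ would follow if we knew $\norm{M}$ cannot be too small unless $M$ is close to $M^\ast$; but this is not automatic.

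The main obstacle, then, is exactly controlling the \emph{non-normal} preimages $M$ of $A$ under $\exp$: a priori $A$ could have a logarithm $M$ with $\fullnorm{M} < \fullnorm{M^\ast}$ that is far from $0$ and not normal, and the local uniqueness statement only constrains the minimizing \emph{curve}, not all matrix logarithms. I would resolve this by shrinking $\eps$ once more so that $\norm{A-\id}<\eps$ forces \emph{every} logarithm $M$ of $A$ with $\fullnorm{M} < \eps_0$ to lie in $B_{\eps_0}(0)$ and hence, by Lemma \ref{lemma:geodesicSpheres} iii) applied to the geodesic $t\mapsto\vPhi(tM')$ where $M'$ realizes $\vPhi(M')=A$, to satisfy $\fullnorm{M}\ge\dg(\id,A)$; meanwhile any logarithm with $\fullnorm{M}\ge\eps_0 > \dg(\id,A)$ trivially exceeds the distance. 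Thus in all cases $\fullnorm{M}\ge\dg(\id,A)=\fullnorm{M^\ast}$, the infimum over logarithms is attained at $M^\ast$, and $\dg(\id,A)=\min\fullnorm{\Log A}$, completing the proof. The delicate bookkeeping is purely in the choice of $\eps$ relative to $\eps(\id)$ and the continuity of $\exp$ and $\vPhi^{-1}$ near $\id$; no new analytic idea beyond Lemma \ref{lemma:geodesicSpheres} is needed.
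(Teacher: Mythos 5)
Your overall structure mirrors what the paper's one-line justification is gesturing at: establish the upper bound $\dg(\id,A)\le\fullnorm{\Log A}$ from the normal-initial-tangent geodesic, invoke the local uniqueness of Lemma~\ref{lemma:geodesicSpheres} to get $\dg(\id,A)=\fullnorm{M^\ast}$ with $M^\ast=\vPhi^{-1}(A)$, and observe that $M^\ast=\log A$ is a normal logarithm. That part is sound.

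However, your lower-bound argument against an \emph{arbitrary} logarithm $M$ of $A$ has a genuine gap. You claim that for $M$ with $\exp(M)=A$ and $\fullnorm{M}<\eps_0$ one can conclude $\fullnorm{M}\ge\dg(\id,A)$ ``by Lemma~\ref{lemma:geodesicSpheres}\,iii) applied to the geodesic $t\mapsto\vPhi(tM')$ where $M'$ realizes $\vPhi(M')=A$.'' But that lemma concerns the $\vPhi$-preimage $M'$ and yields $\dg(\id,A)=\fullnorm{M'}$; it places no constraint whatsoever on the unrelated matrix $M$. For non-normal $M$ one has $\vPhi(M)\ne\exp(M)=A$, so $M\ne M'$, and the logical chain simply breaks. (Moreover, ``forces every logarithm $M$ of $A$ with $\fullnorm{M}<\eps_0$ to lie in $B_{\eps_0}(0)$'' is a tautology, not a consequence of shrinking $\eps$.)

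The missing ingredient, which the paper uses silently, is the local injectivity of $\exp$ near $0$: since $D\exp[0]$ is the identity map on $\gln$, $\exp$ is a diffeomorphism on some ball $B_\delta(0)\subseteq\gln$. After shrinking $\eps_0$ below both $\eps(\id)$ and $\delta$, any $M$ with $\exp(M)=A$ and $\fullnorm{M}<\eps_0$ must equal $M^\ast=\log A$, because both lie in the injectivity ball and have the same exponential; this gives $\fullnorm{M}=\fullnorm{M^\ast}=\dg(\id,A)$. The remaining logarithms satisfy $\fullnorm{M}\ge\eps_0>\dg(\id,A)$ trivially. The operative dichotomy is therefore $\fullnorm{M}<\eps_0$ versus $\fullnorm{M}\ge\eps_0$, not normal versus non-normal; your split on normality only re-treats the case you already understand and leaves the problematic case without a working argument.
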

In particular, for $F\in\GLpn$ with sufficiently small $\norm{F-\id}$, Proposition \ref{prop:distanceCloseToIdentity} can be applied to the positive definite symmetric (and therefore normal) matrix $F^TF$ to obtain the distance
\begin{align}
	\dist^2(F,F^{-T}) = \dist^2(F^TF,\id) &= \min \{\fullnorm{M}^2 \setvert \exp(M)=F^TF\}\nnl
	&= \mu\,\norm{\dev\log(F^TF)}^2 + \frac{\kappa}{n}\, [\tr\log(F^TF)]^2\,,\label{eq:localDistanceIdToU}
\end{align}
where $\log (F^TF)$ is the symmetric \emph{principal matrix logarithm} of $F^TF$ on $\PSymn$ \cite{Bernstein2009,higham2008}. Note, again, that it is not obvious at this point that equality \eqref{eq:localDistanceIdToU} holds for all $F\in\GLpn$ since it is not immediately clear that there is no shorter geodesic curve connecting $\id$ and $F^TF$.

\subsection{Application to nonlinear elasticity}
An example for the application of geodesic distance measures is the theory of nonlinear elasticity (and, more specifically, hyperelasticity), where the deformation of a solid body is modelled via an energy functional $W$ which depends on the gradient $F\in\GLpn$ of the deformation (see e.g.\ \cite[Chapter 4]{Ciarlet1988} for an introduction to hyperelasticity). Similarly, Mielke's work on the geodesic distance in $\SLn$ was primarily motivated by elasto-plastic applications \cite{Mielke2002}.

Among the energy functions considered in nonlinear elasticity is the \emph{isotropic Hencky strain energy}
\begin{equation}
	W\colon \GLpn\to\R, \quad W(F) = \mu \, \norm{\dev\log \sqrt{F^TF}}^2+\frac{\kappa}{2}\,[\tr(\log \sqrt{F^TF})]^2\,,
\end{equation}
which was introduced in 1929 by Heinrich Hencky \cite{Hencky1929}. Note that, because $\log\sqrt{F^TF}$ is symmetric, $W$ can be written as
\begin{equation}
	W(F) = \fullnorm{\log\sqrt{F^TF}}
\end{equation}
for arbitrary $\muc>0$. As was shown by Neff et al.\ \cite{Neff_Eidel_Osterbrink_2013,neff2015henckymain}, the Hencky energy can be characterized as the geodesic distance (with respect to a left-$\GLn$-invariant, right $\On$-invariant Riemannian metric) of the deformation gradient $F$ to the group $\SOn$ of rigid rotations. The proof of this result employs the parametrization of geodesic curves given in Theorem \ref{theorem:parameterizationOfGeodesics} and the characterization of the geodesic distance stated in Corollary \ref{cor:analyticalDistanceExpression} as well as a recently discovered optimality result regarding the matrix logarithm \cite{Lankeit2014,Neff_Nagatsukasa_logpolar13}.

\begin{proposition}
\label{prop:hencky}
	Let $g$ be the left-invariant Riemannian metric on $\GL(n)$ induced by the isotropic inner product $\fullproduct{\cdot,\cdot}$ on $\gln$ with $\muc\geq0$, and let $F\in\GLp(n)$. Then
	\begin{align}
		\dg(F,\SO(n)) = \dg(F,\,R) = \fullnorm{\log \sqrt{F^TF}} = \fullnorm{\log U}\,,
	\end{align}
	where $F=R\,U$ with $R\in\SO(n)$, $U=\sqrt{F^TF}\in\PSym(n)$ is the polar decomposition of $F$ and
	\begin{equation}
		\dg(F,\SO(n)) = \underset{Q\in\SO(n)}{\inf} \! \dg(F,Q)
	\end{equation}
	denotes the geodesic distance of $F$ to $\SOn$.
\end{proposition}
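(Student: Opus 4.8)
The plan is to establish the chain of equalities by reducing to the case $F=U\in\PSymn$, deriving the upper bound from an explicit geodesic, and deriving the matching lower bound from the optimality property of the matrix logarithm. Write $F=RU$ with $R\in\SOn$ and $U=\sqrt{F^TF}\in\PSymn$. The left-invariance of $\dg$ gives $\dg(F,R)=\dg(R^{-1}F,R^{-1}R)=\dg(U,\id)$ and, for every $Q\in\SOn$, $\dg(F,Q)=\dg(U,R^{-1}Q)$; since $Q\mapsto R^{-1}Q$ is a bijection of $\SOn$, this yields $\dg(F,\SOn)=\dg(U,\SOn)$. Hence it suffices to show that $\dg(U,\SOn)=\dg(U,\id)=\fullnorm{\log U}$.

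For the upper bound I would use the geodesic from Theorem~\ref{theorem:parameterizationOfGeodesics} with initial point $U$ and $M=-\log U$. Since $\log U\in\Symn$ we have $\skew M=0$, so this geodesic reduces to $X\colon[0,1]\to\GLpn$, $X(t)=U\exp(-t\log U)$, with $X(0)=U$ and $X(1)=U\,U^{-1}=\id\in\SOn$. By \eqref{eq:geodesicLength} its length is $\len(X)=\fullnorm{-\log U}=\fullnorm{\log U}$, hence $\dg(U,\SOn)\le\dg(U,\id)\le\fullnorm{\log U}$.

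For the lower bound, fix $Q\in\SOn$. By Corollary~\ref{cor:analyticalDistanceExpression} together with left-invariance, $\dg(U,Q)=\dg(\id,U^{-1}Q)=\min\{\fullnorm M\setvert\exp(\sym M-\omega\skew M)\exp((1+\omega)\skew M)=U^{-1}Q\}$, a minimum over a non-empty set. Let $M$ be any competitor and set $W\colonequals\sym M-\omega\skew M$. Multiplying the constraint on the right by $\exp(-(1+\omega)\skew M)$ gives $\exp(W)=U^{-1}\hat Q$ with $\hat Q\colonequals Q\exp(-(1+\omega)\skew M)\in\SOn$, hence $\exp(-W)=\hat Q^{-1}U=\hat Q^T U$. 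Since the polar decomposition of $U\in\PSymn$ is $U=\id\,U$, the optimality property of the matrix logarithm \cite{Lankeit2014,Neff_Nagatsukasa_logpolar13} --- that every real $V\in\gln$ with $\exp(V)=Q_1^{T}U$ for some $Q_1\in\SOn$ satisfies $\norm{\dev\sym V}\ge\norm{\dev\log U}$ --- applied to $V=-W$ and $Q_1=\hat Q$ gives $\norm{\dev\sym M}=\norm{\dev\sym(-W)}\ge\norm{\dev\log U}$. Moreover, taking determinants in $\exp(W)=U^{-1}\hat Q$ forces $\tr M=\tr W=-\log\det U=-\tr\log U$, so $(\tr M)^2=(\tr\log U)^2$. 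Since $\mu_c\ge0$ and $\skew\log U=0$, we conclude
\[
\fullnorm{M}^2=\mu\,\norm{\dev\sym M}^2+\mu_c\,\norm{\skew M}^2+\tfrac{\kappa}{n}(\tr M)^2\ge\mu\,\norm{\dev\log U}^2+\tfrac{\kappa}{n}(\tr\log U)^2=\fullnorm{\log U}^2.
\]
Therefore $\dg(U,Q)\ge\fullnorm{\log U}$ for every $Q\in\SOn$, which combined with the upper bound yields $\dg(U,\SOn)=\dg(U,\id)=\fullnorm{\log U}$; undoing the reduction gives $\dg(F,\SOn)=\dg(F,R)=\fullnorm{\log\sqrt{F^TF}}=\fullnorm{\log U}$.

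I expect the only substantial ingredient to be the logarithm-optimality inequality $\norm{\dev\sym V}\ge\norm{\dev\log U}$ used in the third step, which is precisely the ``sum of squared logarithms''/polar-factor optimality theorem of \cite{Lankeit2014,Neff_Nagatsukasa_logpolar13}; I would invoke it rather than reprove it. Everything else --- the invariance reductions, the explicit geodesic, and the observation that the trace part of $W$ is automatically pinned down by a determinant computation --- is routine bookkeeping. It is also worth noting that $\mu_c$ enters only through $\mu_c\norm{\skew M}^2\ge0$, which is why the statement holds for every $\mu_c\ge0$, including the degenerate case $\mu_c=0$.
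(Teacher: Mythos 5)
Your proof is correct and follows exactly the strategy the paper attributes to \cite{neff2015henckymain}: left-invariance reduces to $U=\sqrt{F^TF}\in\PSymn$, the geodesic $t\mapsto U\exp(-t\log U)$ gives the upper bound via Theorem~\ref{theorem:parameterizationOfGeodesics} and \eqref{eq:geodesicLength}, and Corollary~\ref{cor:analyticalDistanceExpression} together with the $\dev\sym$-optimality of $\log U$ from \cite{Lankeit2014,Neff_Nagatsukasa_logpolar13} gives the matching lower bound; absorbing $\exp(-(1+\omega)\skew M)$ into a single $\hat Q\in\SOn$ and noting that $\sym(\sym M-\omega\skew M)=\sym M$ and $\tr(\sym M-\omega\skew M)=\tr M$ kill the $\omega$-dependence is precisely the right way to connect the competitor constraint to the optimality theorem. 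One caveat on your closing remark: the claim that $\mu_c=0$ is covered because ``$\mu_c$ enters only through $\mu_c\norm{\skew M}^2\ge 0$'' is imprecise --- $\mu_c$ also enters the competitor set through $\omega=\mu_c/\mu$, and more to the point Corollary~\ref{cor:analyticalDistanceExpression} and the Hopf--Rinow machinery behind it are established in this paper only for $\mu,\mu_c,\kappa>0$, since for $\mu_c=0$ the form $\fullproduct{\cdot,\cdot}$ degenerates to a semi-inner product and $g$ is no longer a Riemannian metric. Your lower-bound computation would indeed survive a limit $\mu_c\searrow 0$, but promoting that to a statement about the geodesic distance itself needs a separate argument (e.g.\ monotonicity of the infimal length in $\mu_c$), which this paper implicitly delegates to the cited reference.
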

\begin{proof}
	See \cite{neff2015henckymain}.
\end{proof}
Proposition \ref{prop:hencky} also shows that equality \eqref{eq:localDistanceIdToU} holds globally, i.e.\ for all $F\in\GLpn$ and arbitrarily large $\norm{F-\id}$.

\subsection{Open Problems}
Although the explicit parametrization of geodesic curves makes it possible to establish some lower bounds for the geodesic distance in certain special cases, there is no known general formula or algorithm to compute the distance between two elements $A,B$ of $\GLpn$. However, recent results indicate that it might be possible to compute the geodesic distance for a number of additional special cases, including the (non-local) distance between arbitrary $A,B\in\SOn$ regarded as elements of $\GLn$ \cite{neffMartin2015SO}. Note that although the geodesic distance on $\SOn$ with respect to the canonical bi-invariant metric is already well known \cite{Moakher2002}, the distance discussed here takes into account the length of connecting curves which do not lie completely in $\SOn$. Furthermore, it might be useful to obtain some basic geometric properties of $\GLn$ or $\SOn$ with the considered metrics (e.g.\ to explicitly compute the curvature tensors).

\section*{Acknowledgements}
We are grateful to Prof.\ Robert Bryant (Duke University) for his helpful remarks regarding geodesics on Lie groups and invariances of inner products on matrix spaces as well as to Prof.\ Alexander Mielke (Weierstra\ss-Institut, Berlin) for pertinent discussions on geodesics in $\GL(n)$.

{
\footnotesize
\printbibliography[heading=bibnumbered]
}

\newpage
\begin{appendix}
\section{Appendix}
{\footnotesize
\label{section:expAppendix}
\subsection{Basic properties of the matrix exponential}
A proof for the following elementary properties of the matrix exponential can be found in \cite{higham2008}.
\begin{lemma}
	\label{lemma:basicExpProperties}
	Let $B_i\in\gl(d_i)$, $i\in\{1,\dotsc,k\}$ be square block matrices of size $d_i\times d_i$, $\lambda \in \R$ and $M,N\in\gln$. Then:
	\begin{alignat}{2}
		&\text{i)}& \det(\exp(M)) &= e^{\tr M}\label{eq:expDetTrace}\,,\\
		&\text{ii)}& \exp(\tfrac{\lambda}{n} \id) &= e^{\lambda}\id\,,\\
		&\text{iii)}& \exp(M^T) &= \exp(M)^T\,,\\
		&\text{iv)}& \exp(-M) &= \exp(M)^{-1}\,,\\
		&\text{v)}& MN=NM &\Rightarrow \exp(M+N) = \exp(M)\exp(N) = \exp(N)\exp(M)\label{eq:expHomomorphism}\,,\\
		&\text{vi)}& MN=NM &\Rightarrow M\exp(N) = \exp(N)M\label{eq:expCommutation}\,,\\
		&\text{vii)}& T\in\GLn &\Rightarrow \exp(T^{-1}MT) = T^{-1}\exp(M)T\label{eq:expTransform}\,,\\
		&\text{viii)}& M\in\so(n) &\Rightarrow \exp(M) \in \SO(n)\,,\label{eq:expsoSO}\\
		&\text{ix)}& M\in\Sym(n) &\Rightarrow \exp(M) \in \PSym(n)\,,\label{eq:expsymPSym}\\
		&\text{x)} &&\mkern-72mu\exp\begin{pmatrix}B_1&&0\\&\ddots\\0&&B_k\end{pmatrix} = \begin{pmatrix}\exp B_1&&0\\&\ddots\\0&&\exp B_i\end{pmatrix}\label{eq:expBlock}\,,\\
		&\text{xi)}& \Dexp{M}{T} &= \int\nolimits_0^1 \exp(sM)\: T\, \exp((1-s)M)\,\ds\,.\label{eq:expDerivative}
	\end{alignat}
	If $M$ and $T$ commute, formula xi) simplifies to
	\begin{equation}
		\Dexp{M}{T} = \exp(M)T = T\exp(M)\,.\label{eq:expSimpleDerivative}
	\end{equation}
\end{lemma}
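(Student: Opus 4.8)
The plan is to establish i)--x) by elementary manipulation of the defining series $\exp(M) = \sum_{k=0}^\infty \frac1{k!}M^k$, which converges absolutely in $\Rnn$ with respect to any submultiplicative norm, and to treat xi) by a short ordinary differential equation argument. First the purely termwise identities: iii), vi), vii) and x) follow at once from the fact that transposition, left or right multiplication by a fixed matrix, conjugation $X\mapsto T^{-1}XT$ with fixed $T\in\GLn$, and the block-diagonal structure are each continuous linear operations which respect matrix powers --- for instance $(T^{-1}MT)^k = T^{-1}M^kT$ --- so each may be applied term by term to the series. Property ii) is the instance $\exp(c\,\id) = \bigl(\sum_k \tfrac{c^k}{k!}\bigr)\id = e^c\,\id$ with $c = \lambda/n$. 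Property v) is the Cauchy product of two absolutely convergent series together with the binomial formula $(M+N)^k = \sum_j \binom{k}{j}M^jN^{k-j}$, which is valid \emph{precisely} because $MN=NM$; and iv) is then the special case $N=-M$ of v), giving $\exp(M)\exp(-M) = \exp(0) = \id$.

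Next I would prove i) either by passing to Schur (upper triangular) form over $\mathbb{C}$, so that $\exp(M)$ is upper triangular with diagonal entries $e^{\lambda_i}$ and hence $\det\exp(M) = \prod_i e^{\lambda_i} = e^{\sum_i\lambda_i} = e^{\tr M}$ (both sides being conjugation invariant and unchanged by viewing $M$ as complex), or, staying entirely real, by noting that $\varphi(t)\colonequals\det\exp(tM)$ solves $\dot\varphi(t) = \tr(M)\,\varphi(t)$, $\varphi(0)=1$. With i), iii) and iv) available, properties viii) and ix) are immediate: for $M\in\son$ one has $\exp(M)^T\exp(M) = \exp(M^T)\exp(M) = \exp(-M)\exp(M) = \id$ and $\det\exp(M) = e^{\tr M} = e^0 = 1$, so $\exp(M)\in\SOn$; for $M\in\Symn$, $\exp(M) = \exp(\tfrac{M}{2})^T\exp(\tfrac{M}{2})$ is symmetric positive semidefinite and invertible by iv), hence lies in $\PSymn$.

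The one identity requiring genuine work is xi). I would fix $M,T\in\gln$ and set $A(s,\varepsilon)\colonequals\exp\bigl(s(M+\varepsilon T)\bigr)$, so that $\partial_s A = (M+\varepsilon T)A$ with $A(0,\varepsilon)=\id$. Differentiating in $\varepsilon$ at $\varepsilon=0$ and writing $B(s)\colonequals\partial_\varepsilon A(s,0)$ yields the linear inhomogeneous problem
\[
	\dot B(s) = M\,B(s) + T\,\exp(sM)\,,\qquad B(0)=0\,,
\]
whose solution by variation of parameters is $B(s) = \int_0^s \exp\bigl((s-u)M\bigr)\,T\,\exp(uM)\,\mathrm{d}u$. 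Evaluating at $s=1$ gives $\Dexp{M}{T} = B(1) = \int_0^1 \exp\bigl((1-u)M\bigr)\,T\,\exp(uM)\,\mathrm{d}u$, and the substitution $s=1-u$ turns this into the asserted formula $\Dexp{M}{T} = \int_0^1 \exp(sM)\,T\,\exp((1-s)M)\,\ds$. Finally, if $M$ and $T$ commute then $\exp(sM)\,T = T\,\exp(sM)$ and $\exp(sM)\exp((1-s)M) = \exp(M)$ by v)--vi), so the integrand is the constant $\exp(M)T = T\exp(M)$ and \eqref{eq:expSimpleDerivative} follows.

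The main --- indeed the only --- obstacle is making xi) fully rigorous: justifying the interchange of the $\varepsilon$-derivative with the infinite sum (equivalently, the smoothness of $\varepsilon\mapsto A(s,\varepsilon)$ and solvability of the above ODE). Everything else is bookkeeping with absolutely convergent series and standard triangularization, which is exactly why the paper is content to cite \cite{higham2008} rather than spell the argument out.
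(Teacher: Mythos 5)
The paper supplies no proof of its own for this lemma: it simply cites \cite{higham2008}, so there is no argument of the paper's to compare yours against. Your proposal is the standard textbook derivation and is essentially correct --- the termwise manipulations of the power series for iii), vi), vii), x), the Cauchy product with the binomial theorem for v), the corollaries iv), viii), ix), either Schur triangularisation or the scalar ODE $\dot\varphi=\tr(M)\varphi$ for i), and the variation-of-parameters argument for xi) are all sound. You are also right that the only point requiring real care is the interchange of $\partial_\varepsilon$ with the series in xi); this is handled by noting that the series for $\exp$ and its formal $\varepsilon$-derivative both converge locally uniformly in $(s,\varepsilon)$, so the interchange is licit.

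One thing you should have flagged rather than glossed over: your own calculation for ii) yields $\exp(\tfrac{\lambda}{n}\id)=e^{\lambda/n}\id$, which does \emph{not} agree with the stated right-hand side $e^{\lambda}\id$ unless $n=1$ or $\lambda=0$. The statement as printed is a typo (it should read either $\exp(\tfrac{\lambda}{n}\id)=e^{\lambda/n}\id$ or $\exp(\lambda\id)=e^{\lambda}\id$); since you derived the correct value, you should point out the mismatch rather than present your formula as establishing ii) as written.
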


\begin{proposition}
\label{prop:principalLogDiffeomorphism}
	The function
	\[
		\exp\colon \Sym(n) \to \PSym(n)
	\]
	is a diffeomorphism from $\Sym(n)$ to its open subset $\PSym(n)$. Its inverse
	\[
		\eval{\exp}{\Symn}\inv\colon \: \PSymn\to\Symn\,,
	\]
	is called the \emph{principal logarithm} on $\PSymn$ and is denoted by $\log$.
\end{proposition}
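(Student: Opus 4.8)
The plan is to establish, in order, that $\exp\colon\Symn\to\Rnn$ is smooth, that it restricts to a bijection from $\Symn$ onto $\PSymn$, and that its inverse is smooth; for the last point I would invoke the inverse function theorem, which reduces everything to showing that the differential of $\exp$ at every $S\in\Symn$ is an automorphism of $\Symn$. Smoothness of $\exp$ is clear from its power series. For the image I would use orthogonal diagonalization: writing $S=Q^TDQ$ with $Q\in\On$ and $D$ diagonal with real diagonal entries $d_1,\dots,d_n$, equations \eqref{eq:expTransform} and \eqref{eq:expBlock} give $\exp(S)=Q^T\exp(D)Q$, where $\exp(D)$ is the diagonal matrix with entries $e^{d_1},\dots,e^{d_n}$; this is symmetric positive definite, which re-proves \eqref{eq:expsymPSym}. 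Conversely, any $P\in\PSymn$ has the form $P=Q^T\Lambda Q$ with $Q\in\On$ and $\Lambda$ diagonal with positive entries $\lambda_1,\dots,\lambda_n$, and then $S\colonequals Q^TDQ$ with $D$ diagonal of entries $\ln\lambda_1,\dots,\ln\lambda_n$ lies in $\Symn$ and satisfies $\exp(S)=P$. Hence $\exp(\Symn)=\PSymn$, which is open in $\Symn$ since positive definiteness is an open condition.

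For injectivity I would pass to spectral decompositions. If $S\in\Symn$ has pairwise distinct eigenvalues $d_1,\dots,d_k$ with mutually orthogonal spectral projections $P_1,\dots,P_k$, so that $S=\sum_j d_jP_j$, then $\exp(S)=\sum_j e^{d_j}P_j$; since $t\mapsto e^t$ is strictly increasing, the values $e^{d_1},\dots,e^{d_k}$ are again pairwise distinct, so this is exactly the spectral decomposition of $\exp(S)$. Therefore the spectral projections of $\exp(S)$ are the $P_j$ and its eigenvalues are the $e^{d_j}$, and $S=\sum_j(\ln e^{d_j})P_j$ is recovered from $\exp(S)$. Thus $\exp$ is injective on $\Symn$.

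It remains to show $\exp^{-1}$ is smooth, for which it suffices by the inverse function theorem that the linear map $T\mapsto\Dexp{S}{T}$ be a bijection of $\Symn$ for each $S\in\Symn$; note that it does map $\Symn$ into $\Symn$ since $\exp(\Symn)\subseteq\Symn$. Differentiating the identity $\exp(Q^T(\cdot)Q)=Q^T\exp(\cdot)Q$ from \eqref{eq:expTransform} gives $\Dexp{Q^TDQ}{Q^TTQ}=Q^T\bigl(\Dexp{D}{T}\bigr)Q$, and since $T\mapsto Q^TTQ$ is a linear automorphism of $\Symn$ we may assume $S=D$ is diagonal with entries $d_1,\dots,d_n$. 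Then \eqref{eq:expDerivative} yields componentwise $\bigl(\Dexp{D}{T}\bigr)_{ij}=T_{ij}\int\nolimits_0^1 e^{sd_i+(1-s)d_j}\,\ds=T_{ij}\,\varphi(d_i,d_j)$, where $\varphi(a,b)=\frac{e^a-e^b}{a-b}$ for $a\neq b$ and $\varphi(a,a)=e^a$ is the first divided difference of the exponential. As $\varphi(a,b)>0$ for all $a,b\in\R$, $\Dexp{D}{T}=0$ forces $T=0$, so $T\mapsto\Dexp{D}{T}$ is an injective endomorphism of the finite-dimensional space $\Symn$, hence an automorphism. By the inverse function theorem $\exp$ is a local diffeomorphism at every point of $\Symn$, and being in addition a bijection onto $\PSymn$ it is a global diffeomorphism; its inverse is by definition $\log$.

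I expect the main obstacle to be this last step, the nonsingularity of the differential of $\exp$ on $\Symn$: the preceding parts are essentially bookkeeping with orthogonal diagonalization, whereas here one genuinely needs the integral representation \eqref{eq:expDerivative} and the positivity of the divided differences $\varphi(d_i,d_j)$. A minor point of care is that the $d_i$ need not be distinct, which is exactly the situation absorbed by the limiting value $\varphi(a,a)=e^a$.
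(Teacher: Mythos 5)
Your argument is correct but deliberately self-contained, whereas the paper's proof is essentially a citation: it refers to Bernstein (Proposition 11.4.5) for the injectivity and smoothness of $\exp$ on $\Symn$, and to Horn and Johnson (Theorem 6.6.14) for the smoothness of the inverse, the latter packaged as a general fact about primary matrix functions — namely that a function $F$ acting on the spectrum through a real-valued $f$ is smooth where $f$ is, applied here to $f=\ln$ on $\R^+$. You instead build everything from the paper's own Lemma \ref{lemma:basicExpProperties}: surjectivity and injectivity from orthogonal diagonalization and uniqueness of the spectral decomposition (the monotonicity of $t\mapsto e^t$ being the key point), and smoothness of the inverse from the inverse function theorem once the derivative $D\exp[S]$ is seen, after reducing to the diagonal case, to act entrywise by multiplication with the first divided differences $\varphi(d_i,d_j)=\int_0^1 e^{sd_i+(1-s)d_j}\,\mathrm{d}s>0$. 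This is exactly the Daleckii–Krein computation that underlies the primary-matrix-function machinery, so the mathematical substance coincides; your version trades the generality of the cited theorem for transparency, staying entirely within tools already available in the paper, which is arguably more in the spirit of the authors' stated goal of self-containment. The one step worth making fully explicit is that a bijective local diffeomorphism is a global one (the local inverses patch because the global inverse is a well-defined map), but that is routine.
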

\begin{proof}
	We refer to \cite[Proposition 11.4.5]{Bernstein2009} for a proof that $\exp\colon \Sym(n) \to \PSym(n)$ is indeed injective and infinitely differentiable. To see that its inverse is differentiable, we refer to \cite[Theorem 6.6.14]{horn1994topics}, where it is shown that a primary matrix function $F$ defined through a real valued function $f$ acting on its eigenvalues is differentiable if $f$ is smooth on the set of eigenvalues attained on the domain of $F$. Since the principal logarithm on $\PSymn$ is such a function defined through $f=\ln$, and because $\ln$ is smooth on $\R^+$, i.e.\ on the set of eigenvalues attained on $\PSymn$, it is differentiable. For further information on the matrix exponential, the matrix logarithm and matrix functions in general we refer to \cite{higham2008}.
\end{proof}

\subsection{Additional proofs}

\begin{lemma}
\label{lemma:constantSpeedAppendix}
	Let $X \in \adm^1([a,b])$. Then there exists a unique piecewise differentiable $\varphi\in C^0([a,b];[a,b])$, $\varphi(a)=a$, $\varphi(b)=b$, $\varphi'(t)>0$ such that $X \circ \varphi$ has constant speed.
\end{lemma}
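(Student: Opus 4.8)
The plan is to build $\varphi$ explicitly from the arc-length function of $X$ and then deduce uniqueness from the reparametrization invariance of the length. First I would fix a partition $a = a_0 < a_1 < \dots < a_{m+1} = b$ adapted to $X$ (so that $\eval{X}{[a_j,a_{j+1}]} \in C^1_r([a_j,a_{j+1}];\GLn)$) and introduce the \emph{speed function} $v(t) \colonequals \norm{\Xdot(t)}_{X(t)}$. On each open subinterval $(a_j,a_{j+1})$ the map $v$ is continuous, and since $X$ is regular its one-sided limits at the breakpoints $a_j$ exist (by the definition of $\adm^1$ in Definition \ref{def:admissibleSets}) and are strictly positive; hence $v$ is bounded and bounded away from $0$. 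Consequently the \emph{arc-length function}
\[
	s(t) \colonequals \int_a^t v(\sigma)\,\mathrm{d}\sigma\,, \qquad t\in[a,b]\,,
\]
is continuous, strictly increasing, piecewise continuously differentiable with $s'(t) = v(t) > 0$ on each $(a_j,a_{j+1})$, and satisfies $s(a) = 0$, $s(b) = \len(X) \equalscolon L$ (note $L > 0$ since $v > 0$). Thus $s\colon [a,b]\to[0,L]$ is a homeomorphism; its inverse $s^{-1}$ is continuous, strictly increasing and, by the inverse function theorem applied on each piece, piecewise $C^1$ away from the finite set $\{s(a_1),\dots,s(a_m)\}$, with $(s^{-1})'(\sigma) = 1/v(s^{-1}(\sigma))$ there.

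Next I would set $\varphi \colonequals s^{-1}\circ\ell$, where $\ell\colon[a,b]\to[0,L]$ is the affine bijection $\ell(u) = \frac{L}{b-a}(u-a)$. Then $\varphi$ is continuous with $\varphi(a) = s^{-1}(0) = a$ and $\varphi(b) = s^{-1}(L) = b$; it is piecewise differentiable, the only possible breakpoints being the finitely many $u$ with $\ell(u) \in \{s(a_1),\dots,s(a_m)\}$; and wherever it is differentiable
\[
	\varphi'(u) = (s^{-1})'(\ell(u))\cdot\tfrac{L}{b-a} = \frac{L}{(b-a)\,v(\varphi(u))} > 0\,.
\]
At any $u$ where $X\circ\varphi$ is differentiable the chain rule gives $(X\circ\varphi)'(u) = \Xdot(\varphi(u))\,\varphi'(u)$, so its speed equals
\[
	\norm{\Xdot(\varphi(u))}_{X(\varphi(u))}\cdot\varphi'(u) = v(\varphi(u))\cdot\frac{L}{(b-a)\,v(\varphi(u))} = \frac{L}{b-a}\,,
\]
a constant; hence $X\circ\varphi$ has constant speed, which establishes existence.

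For uniqueness, suppose $\varphi$ and $\psi$ both satisfy the conclusion, say $X\circ\varphi$ has constant speed $c_\varphi$ and $X\circ\psi$ has constant speed $c_\psi$. Since $\varphi,\psi$ are piecewise $C^1$, strictly increasing with the prescribed endpoints, Lemma \ref{lemma:lengthInvariance} gives $\len(X\circ\varphi) = \len(X\circ\psi) = \len(X) = L$, while a curve of constant speed $c$ on $[a,b]$ has length $c(b-a)$; therefore $c_\varphi = c_\psi = L/(b-a) \equalscolon c$. The same chain-rule computation as above yields $v(\varphi(\tau))\,\varphi'(\tau) = c$ wherever $\varphi$ is differentiable, so splitting the integral at the breakpoints of $\varphi$ and substituting $w = \varphi(\tau)$ gives
\[
	c\,(u-a) = \int_a^u v(\varphi(\tau))\,\varphi'(\tau)\,\mathrm{d}\tau = \int_a^{\varphi(u)} v(w)\,\mathrm{d}w = s(\varphi(u))
\]
for every $u\in[a,b]$, and likewise $c\,(u-a) = s(\psi(u))$. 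Since $s$ is injective, $\varphi(u) = \psi(u) = s^{-1}(c(u-a))$ for all $u$, proving uniqueness (and confirming the explicit formula $\varphi = s^{-1}\circ\ell$).

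The main obstacle is the bookkeeping around the finitely many breakpoints: one must use the definition of $\adm^1$ to see that $v$, even though it may jump at the $a_j$, still has strictly positive one-sided limits so that $s$ is a genuine strictly increasing homeomorphism; that $s^{-1}$ inherits piecewise $C^1$ regularity; that $X\circ\varphi$ is differentiable off a finite set; and that the change-of-variables formula in the uniqueness step is applied piecewise before recombining. None of this is deep, but it is the part that requires care.
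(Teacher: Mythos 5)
Your proposal matches the paper's proof in essence: both build the arc-length function $s(t)=\int_a^t\norm{\Xdot}_{X}\,\mathrm{d}\sigma$, invert it, and post-compose with the affine bijection $[a,b]\to[0,\len(X)]$ to obtain $\varphi$, then verify constant speed via the chain rule. The only notable variation is in the uniqueness step, where the paper observes that $\abs{\hat\varphi'}$ must be constant and invokes the boundary conditions, whereas you invoke Lemma \ref{lemma:lengthInvariance} to fix the common speed $L/(b-a)$ and then use injectivity of $s$ after a change of variables --- a somewhat more explicit route to the same conclusion.
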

\begin{proof}
	Choose $a = a_0 < \dots < a_{m+1} = b$ such that $X$ is continuously differentiable on $[a,b] \setminus \{a_0,\dotsc,a_{m+1}\}$.
	Then the function
	\begin{equation}
		l\colon[a,b]\to[0,\len(X)], \quad l(t)\colonequals\int\nolimits_a^t \norm{\Xdot(t)}_{X(t)} \:\dt = \len(\eval{X}{[a,t]})
	\end{equation}
	is continuously differentiable on $[a,b] \setminus \{a_0,\dotsc,a_{m+1}\}$ as well with $l'(t) = \norm{\Xdot(t)}_{X(t)}$. By definition of regular curves (Definition \ref{def:admissibleSets}), $\Xdot(t) \neq 0$ for all $t \in [a,b] \setminus \{a_0,\dotsc,a_{m+1}\}$ and, because of the positive definiteness of Riemannian metrics, $\norm{\Xdot(t)}_{X(t)} = \sqrt{\fullg{X(t)}(\Xdot(t),\Xdot(t))} > 0$. Thus $l$ is a bijection and
	\begin{align}
		\tilde\varphi\colon [0,\len(X)] \to [a,b], \quad \tilde\varphi(s) = l^{-1}(s)
	\end{align}
	is well-defined, continuous on $[0,\len(X)]$ and continuously differentiable on $[0,\len(X)] \setminus \{l(a_0),\dotsc,l(a_{m+1})\}$. Let $\Xtilde \colonequals X \circ \tilde\varphi$. Then
	\begin{align}
		\dot\Xtilde(t) &= \ddt (X \circ \tilde\varphi)(t) = \tilde\varphi'(t) \Xdot(\tilde\varphi(t)) = \frac{1}{l'(\tilde\varphi(t))}\Xdot(\tilde\varphi(t)) = \frac{\Xdot(\tilde\varphi(t))}{\norm{\Xdot(\tilde\varphi(t))}_{X(\tilde\varphi(t))}}
	\end{align}
	if $\Xtilde$ is differentiable at $t$ and thus $\norm{\dot\Xtilde(t)}_{\Xtilde(t)} \equiv 1$, hence the transformation
	\begin{equation}
		\varphi\colon [a,b]\to[a,b], \:\varphi(t) \colonequals \tilde\varphi\left((t-a)\frac{\len(\gamma)}{b-a}\right)
	\end{equation}
	has the desired properties.\\
	To see the uniqueness of $\varphi$, note that if both $X$ and $X \circ \hat\varphi$ have constant speed, then
	\begin{alignat}{2}
		\norm{\ddt(X \circ \hat\varphi)(t)}_{(X \circ \hat\varphi)(t)} \equiv c \;&\implies\; \norm{\hat\varphi'(t) \cdot \Xdot(\hat\varphi(t))}_{X(\hat\varphi(t))} & &\equiv c \nonumber \\
		&\implies\; \abs{\hat\varphi'(t)} \equiv \frac{c}{\norm{\Xdot(\hat\varphi(t))}_{X(\hat\varphi(t))}} & &\equiv \text{constant}
	\end{alignat}
	for $t \in [a,b] \setminus \{a_0,\dotsc,a_{m+1}\}$. Then the restrictions $\hat\varphi(a) = a$, $\hat\varphi(b) = b$ and $\hat\varphi' > 0$ only allow for $\hat\varphi \equiv 1$.\qedhere
\end{proof}

\begin{lemma}
	\label{lemma:lengthInvarianceAppendix}
	Let $X \in \adm([a,b])$, and let $\varphi \in  C^0([c,d];[a,b])$ be a piecewise continuously differentiable function with $\varphi(c)=a$, $\varphi(d)=b$ and $\varphi'(t)>0$. Then
	\begin{equation}
		\len(X \circ \varphi) = \len(X)\,.
	\end{equation}
\end{lemma}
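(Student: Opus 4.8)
The plan is to reduce the statement to the one‑dimensional change‑of‑variables formula applied on finitely many subintervals. Since $X\in\adm([a,b])$ is only piecewise $C^1$ and $\varphi$ is only piecewise $C^1$, first I would pass to a common refinement: let $a=a_0<\dots<a_{m+1}=b$ be the breakpoints of $X$ and let $c=\beta_0<\dots<\beta_{N+1}=d$ be the breakpoints of $\varphi$; because $\varphi$ is continuous and strictly increasing, the preimages $\varphi^{-1}(a_j)$ are well defined points of $[c,d]$, and adjoining them to the $\beta_i$ gives a partition $c=c_0<\dots<c_{K+1}=d$ such that on each open subinterval $(c_i,c_{i+1})$ the map $\varphi$ is $C^1$ with $\varphi'>0$ and $X$ is regular $C^1$ on $\varphi\bigl((c_i,c_{i+1})\bigr)\subseteq(a_j,a_{j+1})$ for the appropriate $j$. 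In particular $X\circ\varphi$ is piecewise $C^1$ with non‑vanishing derivative, hence lies in $\adm([c,d])$ and $\len(X\circ\varphi)$ is well defined; moreover $\len$ is additive over such a partition, so it suffices to prove the identity on a single subinterval, i.e.\ to assume $X$ regular $C^1$ on $[a,b]$ and $\varphi\in C^1([c,d];[a,b])$ with $\varphi'>0$, $\varphi(c)=a$, $\varphi(d)=b$.

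On such a subinterval the chain rule gives $\ddt(X\circ\varphi)(t)=\varphi'(t)\,\Xdot(\varphi(t))$. Since for each fixed $A\in\GLn$ the form $\fullg{A}(\cdot,\cdot)$ is a positive definite symmetric bilinear form, the induced norm $\norm{\cdot}_A=\sqrt{\fullg{A}(\cdot,\cdot)}$ is absolutely homogeneous, and because $\varphi'(t)>0$ we obtain
\[
	\norm{\ddt(X\circ\varphi)(t)}_{(X\circ\varphi)(t)}
	= \norm{\varphi'(t)\,\Xdot(\varphi(t))}_{X(\varphi(t))}
	= \varphi'(t)\,\norm{\Xdot(\varphi(t))}_{X(\varphi(t))}\,.
\]
Substituting $s=\varphi(t)$, $\ds=\varphi'(t)\,\dt$ — which is legitimate since $\varphi$ maps $[c,d]$ bijectively and increasingly onto $[a,b]$ and the integrand $s\mapsto\norm{\Xdot(s)}_{X(s)}$ is continuous there — turns $\int_c^d \norm{\Xdot(\varphi(t))}_{X(\varphi(t))}\,\varphi'(t)\,\dt$ into $\int_a^b \norm{\Xdot(s)}_{X(s)}\,\ds=\len(X)$, which proves the single‑subinterval case.

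Summing the single‑subinterval identities over $i=0,\dots,K$ then yields $\len(X\circ\varphi)=\len(X)$. I do not expect any real obstacle here: the analytic content is just the substitution rule in one variable combined with homogeneity of $\norm{\cdot}_A$. The only point requiring a little care is the partition bookkeeping — making sure the refinement is chosen so that $\varphi$ carries each subinterval into a single smoothness piece of $X$, and that on each closed subinterval the relevant integrands extend continuously to the endpoints, which is exactly what the one‑sided‑limit convention in Definition \ref{def:admissibleSets} guarantees.
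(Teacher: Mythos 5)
Your proof is correct and takes essentially the same route as the paper's: both pass to a common refinement of the breakpoints of $X$ and $\varphi$ (the paper simply asserts such a partition $a_0<\dots<a_{m+1}$ exists, while you construct it explicitly via $\varphi^{-1}$), then on each subinterval apply the chain rule, the positive homogeneity of $\norm{\cdot}_A$ together with $\varphi'>0$, and the one-dimensional substitution rule, and finally sum. The only difference is your slightly more detailed bookkeeping for the refinement, which is a fair elaboration rather than a change of method.
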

\begin{proof}
	Choose $c=a_0<a_1<\dots<a_{m+1}=d$ such that $\varphi$ is differentiable on $[c,d] \setminus \{a_0, \dotsc , a_{m+1}\}$ with $\varphi' > 0$ and $X$ is differentiable on $[a,b] \setminus \{\varphi(a_0),\dotsc,\varphi(a_{m+1})\}$. Then
	\begin{align}
		\len(X\circ\varphi) &= \int\nolimits_c^d \norm{\ddt(X\circ\varphi)(t)}_{X\circ\varphi(t)} \:\dt = \sum_{i=0}^m \int\nolimits_{a_i}^{a_{i+1}} \norm{\varphi'(t)\:\Xdot(\varphi(t))}_{X\circ\varphi(t)} \:\dt \nonumber \\
		&= \sum_{i=0}^m \int\nolimits_{a_i}^{a_{i+1}} \abs{\varphi'(t)} \: \norm{\Xdot(\varphi(t))}_{X(\varphi(t))} \:\dt = \sum_{i=0}^m \int\nolimits_{a_i}^{a_{i+1}} \varphi'(t) \: \norm{\Xdot(\varphi(t))}_{X(\varphi(t))} \:\dt \nonumber \\
		&= \sum_{i=0}^m \int\nolimits_{\varphi(a_i)}^{\varphi(a_{i+1})} \norm{\Xdot(t)}_{X(t)} \:\dt = \int\nolimits_a^b \norm{\Xdot(t)}_{X(t)} \:\dt \;=\; \len(X)\,. \nonumber\qedhere
	\end{align}
\end{proof}

\begin{lemma}
\label{lemma:canonicalProductAppendix}
	Let $M,N \in \gln$ and $\innerproduct{\cdot,\cdot}$ denote the canonical inner product $\innerproduct{M,N}=\tr(M^TN)$ on $\gln$. Then:
	\begin{alignat}{2}
		&\text{i)} &&\innerproduct{Q^TMQ, Q^TNQ} = \innerproduct{M,N} \quad \text{ for all } Q \in \On\,, \\
		&\text{ii)} &&\innerproduct{M,\id} = \tr(M)\,, \\
		&\text{iii)}\quad &&\innerproduct{S, W} = 0 \quad \text{ for all } S \in \Symn,\: W \in \son\,,
	\end{alignat}
\end{lemma}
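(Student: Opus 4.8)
The plan is to verify each of the three identities by a direct computation, using only the cyclic invariance of the trace, $\tr(AB) = \tr(BA)$, together with the elementary transpose rules $(AB)^T = B^T A^T$ and $\tr(A^T) = \tr(A)$. No deeper structure is needed, so I would simply carry out the three computations in order.

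For i), I would expand $\innerproduct{Q^T M Q, Q^T N Q} = \tr\bigl((Q^T M Q)^T\, Q^T N Q\bigr) = \tr(Q^T M^T Q Q^T N Q)$. Since $Q \in \On$ means $Q^T Q = \id$, and $Q$ is then a square matrix with inverse $Q^T$, we also have $Q Q^T = \id$; this collapses the middle to $\tr(Q^T M^T N Q)$. A single application of cyclicity moves the trailing $Q$ to the front, giving $\tr(M^T N Q Q^T) = \tr(M^T N) = \innerproduct{M,N}$.

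For ii), the identity is immediate from the definition: $\innerproduct{M,\id} = \tr(M^T \id) = \tr(M^T) = \tr(M)$. For iii), symmetry $S^T = S$ gives $\innerproduct{S,W} = \tr(S^T W) = \tr(SW)$; on the other hand, cyclicity together with $W^T = -W$ yields $\tr(SW) = \tr\bigl((SW)^T\bigr) = \tr(W^T S^T) = \tr(-WS) = -\tr(WS) = -\tr(SW)$, whence $\tr(SW) = 0$, i.e.\ $\innerproduct{S,W} = 0$.

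There is essentially no obstacle to this argument; each step is a one-line trace manipulation. The only point that deserves a word of care is the use of $Q Q^T = \id$ in part i), which is not literally the defining relation $Q^T Q = \id$ of $\On$ but follows from it because a square matrix with a one-sided inverse is invertible with that inverse two-sided.
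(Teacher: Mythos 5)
Your proof is correct and follows essentially the same route as the paper: direct expansion of $\tr(M^TN)$, $Q^TQ=QQ^T=\id$ together with trace cyclicity for i), the one-line computation for ii), and deriving $\tr(SW)=-\tr(SW)$ from transpose and (anti)symmetry for iii). The only cosmetic difference is that in iii) you invoke $\tr(A^T)=\tr(A)$ first and cyclicity second, whereas the paper uses cyclicity first; both are one-line variants of the same argument.
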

\begin{proof}
	\begin{align}
		&\text{i)}& \innerproduct{Q^TMQ, Q^TNQ} &= \tr((Q^TMQ)^T (Q^TNQ)) = \tr(Q^TM^TQ Q^TNQ)\nnl
		&&&= \tr(Q^TM^TNQ) = \tr(M^TNQQ^T) = \tr(M^TN) = \innerproduct{M,N}\,.\\
		&\text{ii)}& \innerproduct{M,\id} &= \tr(M^T\id) = \tr(M^T) = \tr(M)\,.\\
		&\text{iii)}&\innerproduct{S,W} &= \tr(S^TW) = \tr(SW) = \tr(WS)\\
		&&&= -\tr(W^TS)= -\innerproduct{W,S} = -\innerproduct{S,W} \;\Longrightarrow\; \innerproduct{S,W} = 0\,.\nonumber\qedhere
	\end{align}
\end{proof}
~\\
\begin{lemma}
\label{lemma:isotropicProductLemmaAppendix}
	Let $M,N \in \gln$ and $\mu,\muc,\kappa\geq0$. Then
	\begin{alignat}{2}
		&\text{i)} &&\fullproduct{Q^TMQ, Q^TNQ} = \fullproduct{M,N} \quad \text{ for all } Q \in \On\,, \\
		&\text{ii)} &&\fullproduct{M, N} = \mu \innerproduct{\dev\sym M, N} + \mu_c \innerproduct{\skew M, N} + \frac{\kappa}{n} (\tr M) (\tr N) \nonumber \\
		&&& \hphantom{\fullproduct{M, N}} = \innerproduct{\mu \dev\sym M + \mu_c \skew M + \frac{\kappa}{n} \tr (M) \cdot \id\,,\,N}\,, \\
		&\text{iii)} &&\innerproduct{M, N}_{1,1,1} = \innerproduct{M,N}\,,\\
		&\text{iv)}\quad &&\fullproduct{S, W} = 0 \quad \text{ for all } S \in \Symn,\: W \in \son\,,
	\end{alignat}
	where $\Symn$ and $\son$ denote the set of symmetric and skew symmetric matrices in $\Rnn$ respectively.
\end{lemma}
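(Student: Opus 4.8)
The plan is to establish part ii) first, since it is the key identity from which the remaining three statements follow almost immediately, and to rely throughout on the elementary facts about the canonical inner product collected in Lemma \ref{lemma:canonicalProductAppendix}.

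For ii), I would start from the definition
\[
	\fullproduct{M,N} = \mu \innerproduct{\dev\sym M,\dev\sym N} + \mu_c \innerproduct{\skew M,\skew N} + \frac{\kappa}{n} (\tr M)(\tr N)
\]
and show term by term that in each pairing the second argument may be replaced by $N$. Writing $N = \dev\sym N + \frac{\tr N}{n}\id + \skew N$, the orthogonality of symmetric and skew matrices (Lemma \ref{lemma:canonicalProductAppendix} iii)) annihilates the pairing of the symmetric matrix $\dev\sym M$ with $\skew N$, while $\innerproduct{\dev\sym M,\id} = \tr(\dev\sym M) = 0$ (Lemma \ref{lemma:canonicalProductAppendix} ii)) annihilates its pairing with $\frac{\tr N}{n}\id$; hence $\innerproduct{\dev\sym M,\dev\sym N} = \innerproduct{\dev\sym M, N}$. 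Similarly $\innerproduct{\skew M,\skew N} = \innerproduct{\skew M, N}$, since $\skew M$ pairs trivially with both $\dev\sym N$ and $\id$. The second displayed equality in ii) is then just bilinearity of $\innerproduct{\cdot,\cdot}$ together with $\innerproduct{\id, N} = \tr N$.

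From here iii) and iv) are immediate. For iii), setting $\mu = \mu_c = \kappa = 1$ in the one-sided form of ii) gives $\innerproduct{M,N}_{1,1,1} = \innerproduct{\dev\sym M + \frac{\tr M}{n}\id + \skew M,\, N} = \innerproduct{\sym M + \skew M,\, N} = \innerproduct{M,N}$, using $\dev\sym M + \frac{\tr M}{n}\id = \sym M$. For iv), if $S\in\Symn$ and $W\in\son$ then $\skew S = 0$, so the one-sided form of ii) reduces $\fullproduct{S,W}$ to $\innerproduct{\mu\dev S + \frac{\kappa}{n}\tr(S)\id,\, W}$, whose first argument is symmetric, so the expression vanishes by Lemma \ref{lemma:canonicalProductAppendix} iii).

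Finally, for i) I would argue directly on the original three-term formula. Conjugation $M \mapsto Q^T M Q$ by $Q\in\On$ commutes with transposition, hence with $\sym$ and $\skew$; it preserves the trace; and since $Q^T\id Q = \id$ it also commutes with $\dev$. Inserting these identities and then applying the $\On$-invariance of the canonical inner product (Lemma \ref{lemma:canonicalProductAppendix} i)) to each of the three summands yields $\fullproduct{Q^TMQ, Q^TNQ} = \fullproduct{M,N}$. I do not expect any genuine obstacle here: the lemma is essentially bookkeeping, and the only point requiring a moment's care is the observation that $\dev$ commutes with orthogonal conjugation, which comes down to $Q^T\id Q = \id$.
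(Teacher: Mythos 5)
Your proposal is correct and follows essentially the same route as the paper's own proof: establish the one-sided formula ii) via the pairwise orthogonality of $\dev\sym M$, $\skew M$, and $\id$, deduce iii) and iv) from it, and prove i) by showing that orthogonal conjugation commutes with $\sym$, $\skew$, $\dev$ and preserves the trace. The only cosmetic difference is in iv), where you apply ii) with the symmetric matrix $S$ in the first slot while the paper uses symmetry of $\fullproduct{\cdot,\cdot}$ to place the skew matrix $W$ there first; both variants reduce instantly to the orthogonality of $\Symn$ and $\son$.
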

\begin{proof}
	We first show that for $M\in\gln$ the matrices $\dev\sym M, \skew M$ and $\id$ are pairwise perpendicular with respect to the canonical inner product $\innerproduct{\cdot,\cdot}$. Lemma \ref{lemma:canonicalProductAppendix} yields $\innerproduct{\dev\sym M, \skew M}=0$ (note that $\dev\sym M$ is symmetric) as well as
	\begin{align}
		&\innerproduct{\skew M,\id} = \tr((\skew M)^T\id) = -\tr(\skew M) = -\tr(\id^T(\skew M)) = -\innerproduct{\skew M,\id} \quad\Rightarrow\quad \innerproduct{\skew M,\id} = 0\,,\nnl
		&\innerproduct{\dev\sym M, \id} = \tr(\dev\sym M) = \tr (M - \frac{\tr M}{n}\id) = \tr M - \frac{\tr M}{n}\tr\id = 0\nonumber\,;
	\end{align}
	Thus
	\begin{align}
		&\hspace{-7mm}\innerproduct{\mu \dev\sym M + \mu_c \skew M + \frac{\kappa}{n} \tr (M) \cdot \id\,,\,N}\nnl
		&= \innerproduct{\mu \dev\sym M + \mu_c \skew M + \frac{\kappa}{n} \tr (M) \cdot \id\,,\,\dev\sym N + \skew N + \frac{\tr N}{n}\id}\nnl
		&= \innerproduct{\mu \dev\sym M,\,\dev\sym N} + \innerproduct{\mu \dev\sym M,\,\skew N} + \innerproduct{\mu \dev\sym M,\,\frac{\tr N}{n}\id}\nnl
		&\quad+ \innerproduct{\mu_c \skew M,\,\dev\sym N} + \innerproduct{\mu_c \skew M,\,\skew N} + \innerproduct{\mu_c \skew M,\,\frac{\tr N}{n}\id}\nnl
		&\quad+ \innerproduct{\frac{\kappa}{n} \tr (M) \cdot \id\,,\,\dev\sym N} + \innerproduct{\frac{\kappa}{n} \tr (M) \cdot \id\,,\,\skew N} + \innerproduct{\frac{\kappa}{n} \tr (M) \cdot \id\,,\,\frac{\tr N}{n}\id}\nnl
		&= \innerproduct{\mu \dev\sym M,\,\dev\sym N} + \innerproduct{\mu_c \skew M,\,\skew N} + \innerproduct{\frac{\kappa}{n} \tr (M) \cdot \id\,,\,\frac{\tr N}{n}\id}\nnl
		&= \mu\innerproduct{\dev\sym M,\,\dev\sym N} + \muc\innerproduct{\skew M,\,\skew N} + \frac{\kappa}{n^2}\,\tr (M) \tr (N)\, \iprod{\id,\id}\nnl
		&= \mu\innerproduct{\dev\sym M,\,\dev\sym N} + \muc\innerproduct{\skew M,\,\skew N} + \frac{\kappa}{n} \tr (M) \tr (N) \quad=\quad \fullproduct{M,N}\,,
	\end{align}
	which proves ii). Then iii) follows directly by
	\begin{equation}
		\innerproduct{M,N}_{1,1,1} = \innerproduct{\dev\sym M + \skew M + \frac{\tr M}{n}\id,\, N} = \innerproduct{M,N}\,,
	\end{equation}
	and iv) from Lemma \ref{lemma:canonicalProductAppendix} by
	\begin{equation}
		\fullproduct{S,W} = \fullproduct{W,S} = \innerproduct{\mu \dev\sym W + \mu_c \skew W + \frac{\kappa}{n} \tr (W) \cdot \id\,,\,S} = \muc\innerproduct{W,\,S} = 0\,.
	\end{equation}
	Finally, since
	\begin{align}
		\tr(Q^TMQ) &= \tr(MQQ^T) = \tr(M)\,,\\
		\sym(Q^TMQ) &= \half(Q^TMQ + (Q^TMQ)^T) = \half(Q^T(M+M^T)Q) = Q^T(\sym M)Q\,,\nnl
		\skew(Q^TMQ) &= \half(Q^TMQ - (Q^TMQ)^T) = \half(Q^T(M-M^T)Q) = Q^T(\skew M)Q\,,\nnl
		\dev\sym(Q^TMQ) &= \dev(Q^T(\sym M)Q) = Q^T(\sym M)Q - \frac{\tr(Q^T(\sym M)Q)}{n}\,\id\nnl
		&= Q^T(\sym M)Q - \frac{\tr(\sym M)}{n}\,\id = Q^T(\sym M - \frac{\tr(\sym M)}{n}\,\id)Q \;=\; Q^T(\dev\sym M)Q\,,\nonumber
	\end{align}
	i) follows from
	\begin{align}
		\fullproduct{Q^TMQ, Q^TNQ} &= \mu \innerproduct{\dev\sym (Q^TMQ),\dev\sym (Q^TNQ)} + \mu_c \innerproduct{\skew (Q^TMQ),\skew (Q^TNQ)}\nnl
		&\quad+ \frac{\kappa}{n} \tr (Q^TMQ) \tr (Q^TNQ)\nnl
		&= \mu \innerproduct{Q^T(\dev\sym M)Q,\, Q^T(\dev\sym N)Q} + \mu_c \innerproduct{Q^T(\skew M)Q,\, Q^T(\skew N)Q}\nnl
		&\quad+ \frac{\kappa}{n} (\tr M) (\tr N)\nnl
		&= \mu \innerproduct{\dev\sym M,\dev\sym N} + \mu_c \innerproduct{\skew M,\skew N} + \frac{\kappa}{n} (\tr M) (\tr N) \;=\; \fullproduct{M,N}\,.\nonumber\qedhere
	\end{align}
\end{proof}

}
\end{appendix}

\end{document}